\newcommand{\Z}{\mathbb{Z}}
\newcommand{\R}{\mathbb{R}}
\newcommand{\C}{\mathbb{C}}
\newcommand{\T}{\mathbb{T}}
\newcommand{\N}{\mathbb{N}}
\newcommand{\x}{\bm x}
\newcommand{\ellinf}{\ell^\infty}
\newcommand{\cv}[1]{\underline{#1}}
\newcommand{\wt}[1]{\widetilde{#1}}
\newcommand{\wh}[1]{\widehat{#1}}
\newcommand{\cal}[1]{{\mathcal #1}}
\newcommand{\ii}{{\rm i}}
\renewcommand{\Re}{\operatorname{Re}}
\renewcommand{\Im}{\operatorname{Im}}
\newcommand{\conj}{\operatorname{conj}}
\newcommand{\Res}{\operatorname{Res}}
\theoremstyle{plain} 
\newtheorem{theorem}{Theorem}[section]
\newtheorem{lemma}[theorem]{Lemma}
\newtheorem{claim}[theorem]{Claim}
\newtheorem{corollary}[theorem]{Corollary}
\newtheorem{proposition}[theorem]{Proposition}
\theoremstyle{definition} 
\newtheorem{definition}[theorem]{Definition}
\newtheorem{hypothesis}[theorem]{Hypothesis}
\theoremstyle{remark}
\newtheorem{remark}{Remark}[section]
\newtheorem{example}{Example}
\begin{document}

\title[Stacking disorder]{Stacking disorder in periodic minimal surfaces}

\author{Hao Chen}
\address[Chen]{Georg-August-Universit\"at G\"ottingen, Institut f\"ur Numerische und Angewandte Mathematik}
\email{h.chen@math.uni-goettingen.de}
\thanks{H.\ Chen is supported by Individual Research Grant from Deutsche Forschungsgemeinschaft within the project ``Defects in Triply Periodic Minimal Surfaces'', Projektnummer 398759432.
M.\ Traizet is supported by the ANR project Min-Max (ANR-19-CE40-0014)}

\author{Martin Traizet}
\address[Traizet]{Institut Denis Poisson, CNRS UMR 7350, Faculté des Sciences et Techniques, Université de Tours }
\email{martin.traizet@lmpt.univ-tours.fr}

\keywords{minimal surfaces}
\subjclass[2010]{Primary 53A10}

\date{\today}

\begin{abstract}
	We construct 1-parameter families of non-periodic embedded minimal surfaces
	of infinite genus in $T \times \R$, where $T$ denotes a flat 2-tori.  Each of
	our families converges to a foliation of $T \times \R$ by $T$.  These
	surfaces then lift to minimal surfaces in $\R^3$ that are periodic in
	horizontal directions but not periodic in the vertical direction.  In the
	language of crystallography, our construction can be interpreted as
	disordered stacking of layers of periodically arranged catenoid necks.  Limit
	positions of the necks are governed by equations that appear, surprisingly,
	in recent studies on the Mean Field Equation and the Painlev\'e VI Equation.
	This helps us to obtain a rich variety of disordered minimal surfaces.  Our
	work is motivated by experimental observations of twinning defects in
	periodic minimal surfaces, which we reproduce as special cases of stacking
	disorder.
\end{abstract}

\maketitle

\section{Introduction}

\subsection{Background}

Triply periodic minimal surfaces (TPMSs) is a topic of trans-disciplinary
interest.  On the one hand, the mathematical notion has been employed to model
many structures in nature (e.g.  biological membrane) and in laboratory (e.g.\
lyotropic liquid crystals); we refer the readers to the book~\cite{hyde1996}
for more information.  On the other hand, natural scientists have been
contributing with important mathematical discoveries, many long precede the
rigorous mathematical treatment.  Examples include the famous gyroid discovered
in~\cite{schoen1970} and proved in~\cite{kgb1996}, as well as its deformations
discovered in~\cite{fogden1993, fogden1999} and recently proved
in~\cite{chen2019}.

The current paper is another example in which mathematics is inspired by
natural sciences.  In~\cite{han2011}, mesoporous crystals exhibiting the
structure of Schwarz' D surface are synthesized.  Remarkably, a twinning
structure, which looks like two copies of Schwarz' D surface glued along a
reflection plane, is observed.  In other word, the periodicity is broken in the
direction orthogonal to the reflection plane.  Thereafter, many other crystal
defects are experimentally observed in more TPMS structures, leading to a
growing demand of mathematical understanding.

Recently, the first named author~\cite{chen2018} responded to this demand with
numerical experiments in Surface Evolver~\cite{brakke1992}.  More specifically,
periodic twinning defects are numerically introduced into rPD surfaces (see
Figure~\ref{fig:twinexample}) and the gyroid.  Success of these experiments
provide strong evidences for the existence of single twinning defects.

\medskip

Moreover, he also became aware of the node-opening techniques developed by the
second named author \cite{traizet2002}.  The idea is to glue catenoid necks
among horizontal planes.  When the planes are infinitesimally close, the necks
degenerate to singular points termed nodes.  It is proved that, if the limit
positions of the nodes satisfy a balancing condition and a non-degeneracy
condition, then it is possible to push the planes a little bit away from each
other, giving a 1-parameter family of minimal surfaces along the way.  The
technique has been used to construct TPMSs~\cite{traizet2008} by gluing necks
among finitely many flat tori, and non-periodic minimal surfaces with
infinitely many planar ends~\cite{morabito2012} by gluing necks among
infinitely many Riemann spheres.

In this paper, we combine the techniques in~\cite{traizet2008}
and~\cite{morabito2012} to glue necks among infinitely many flat tori.  Then
each balanced and non-degenerate arrangement of nodes gives rise to a
1-parameter families of minimal surfaces.  Seen in $T \times \R$, each of these
family converges to a foliation of $T \times \R$ by $T$.  Seen in $\R^3$, the
minimal surfaces are periodic in two independent horizontal directions but not
periodic in any other independent direction.

Our motivation is to rigorously construct twinning defects, but the examples
produced by our construction is far richer.  In the language of
crystallography, our construction can be seen as stacking layers of
periodically arranged catenoid necks.  In the case that $T$ is the 60-degree
torus, for example, we will see that any bi-infinite sequence of 5 stacking
patterns gives arise to a 1-parameter family of minimal surfaces.  These are
then uncountably many families.  In particular, a twinning defect arises from a
stacking fault, which is not periodic but still quite ordered from a physics
point of view.  But most of our examples does not exhibit any order, hence
should be considered as stacking disorders.

\medskip

Back to the twinning, experiments and simulations have shown that TPMSs with
twinning defects decay exponentially to the standard TPMSs.  We will provide
mathematical proof to this physics phenomenon, hence finally justify the term
``TPMS twinning''.  More specifically, we will prove that if a configuration is
eventually periodic, then the corresponding minimal surface is asymptotic to a
TPMS.  The proof uses weighted Banach space as in~\cite{traizet2013}.

\medskip

Our construction uses Implicit Function Theorem, hence only works near the
degenerate limit of foliations, which is not physically plausible.  However,
physicists have proposed formation mechanisms for TPMSs in nature and in
laboratory (e.g.~\cite{charitat1997, michalet1994, conn2006, tang2015}) that
are very similar to node-opening, some even with experiment evidences.  Hence
we may hope that some of the minimal surfaces constructed in this paper,
including those with stacking disorders, would be one day observed in
laboratory.

\begin{figure}[htb!]
	\includegraphics[height=0.4\textheight]{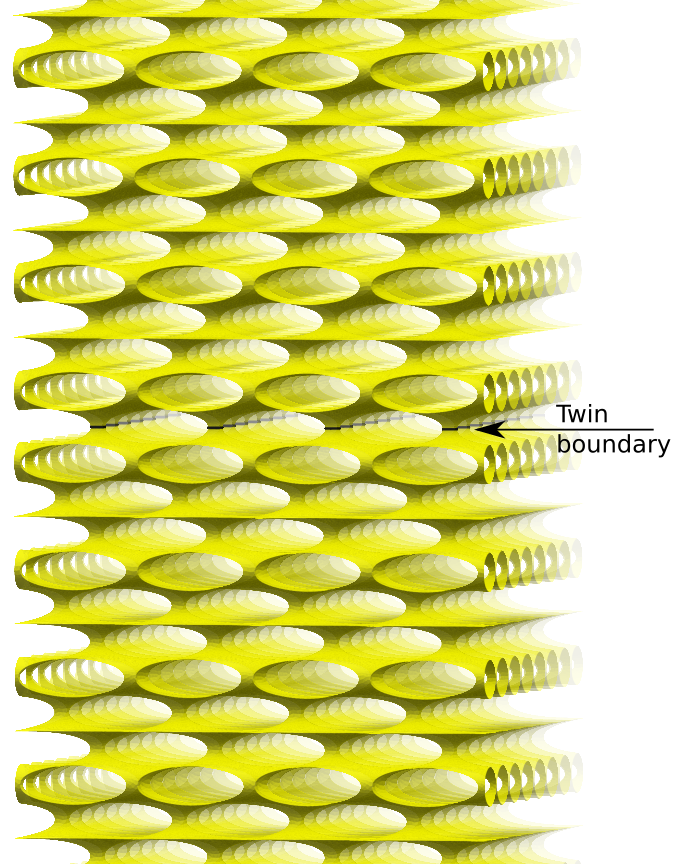}
	\caption{
		Twinning defects in an rPD surface near the catenoid limit, as described in
		Example~\ref{ex:HrPD}.  This is actually an approximation by a TPMS with
		large vertical period.  The surface has a horizontal symmetry plane in the
		middle.  The image was computed in Surface Evolver~\cite{brakke1992} using
		the procedure in \cite{chen2018}.
	}
	\label{fig:twinexample}
\end{figure}

\subsection{Mathematical setting}

A doubly periodic minimal surface (DPMS) $M$ is invariant by two independent
translations, which we may assume to be horizontal.  Let $\Gamma$ be the
two-dimensional lattice generated by these translations, then $M$ projects to a
minimal surface $M/\Gamma$ in $\R^3/\Gamma = T \times \R$, where
$T=\R^2/\Gamma$ denotes a flat 2-torus.  Immediate examples of infinite genus
are given by triply periodic minimal surfaces (TPMSs), if one ignores one of
their three periods.  Motivated by experimental observations mentioned above,
we are particularly interested in non-periodic DPMS with infinite genus.

A flat torus in $T \times \R$ is \emph{horizontal} if it has the form $T
\times \{h\}$ for some $h \in \R$; then $h$ is called the \emph{height} of the
torus.  Informally speaking, we construct minimal surfaces that look like
infinitely many horizontal flat tori in $T \times \R$, ordered by increasing
height, with one catenoid neck between each adjacent pair.  The tori are then
labeled by $k \in \Z$ in the order of height.  The catenoid necks are also
labeled by $k \in \Z$, such that the $k$-th neck is between the $k$-th and the
$(k+1)$-th tori.

\begin{remark}
	Our construction can, in principle, handle finitely many necks between each
	adjacent pair of tori.  But in view of the immediate interest from material
	sciences, we will only glue one catenoid neck between each adjacent tori.
	This also eases the notations and facilitates the proofs, but still produces
	a rich variety of examples.
\end{remark}

More formally, we say that a minimal surfaces $M \in T \times \R$ is
\emph{stacked} if there is an increasing sequence of real numbers $(h_k)_{k
\in \Z}$ satisfying
\begin{itemize}
	\item $M \cap (T \times \{h_k\})$ has a single connected component that
		projects to a null-homotopic smooth simple closed curve in $T$;

	\item $M \cap (T \times (h_k, h_{k+1}))$ is homeomorphic to $T$ with
		two disks removed.
\end{itemize}
Then the $k$-th neck can be interpreted as an annular neighborhood of $M \cap
(T\times\{h_k\})$.

\begin{remark}
	The term ``stacked'' is borrowed from crystallography.  Closed-packed
	structures are often described as a result of stacking layers of periodically
	arranged atoms, one on top of another.  Analogously, a stacked minimal
	surface can be seen as obtained by stacking layers of periodically arranged
	catenoid necks.
\end{remark}

We intend to construct 1-parameter families $M_t$, $t>0$, of stacked minimal
surfaces such that, in the limit $t \to 0$, every neck converges to a catenoid
after suitable rescaling.  If we rescale to keep the size of the torus, then
$M_t$ will converge to a foliation of $T \times \R$ by $T$, and necks
converge to singular points, which we call \emph{nodes}.

\subsection{Definitions and main result}

\begin{definition}
	A \emph{node configuration} is a sequence $\bm p=(p_k)_{k \in \Z}$ such that
	$p_k \in T$ for all $k \in \Z$.
\end{definition}

We use $p_k$ to prescribe the limit position of the $k$-th node, and assume
that

\begin{hypothesis}[Uniform separation]\label{hyp:separate}
	There exists a constant $\varrho > 0$ such that $p_k$ and $p_{\ell}$ are at least
	at distance $\varrho$ apart whenever $|k-\ell | = 1$.
\end{hypothesis}

Assume that $2\omega_1=1$ and $2\omega_2=\tau$ generate the lattice $\Gamma$,
so $T = T_\tau = \C / (\Z + \tau \Z)$.  Without loss of generality, we also
assume that $\Im\tau > 0$.  Then for $p \in T$, we use $x(p;\tau)$ and
$y(p;\tau)$ to denote its coordinates in the basis $1$ and $\tau$, and define
the function
\[
	\xi(p;\tau) = x(p;\tau) \eta_1(\tau) + y(p;\tau) \eta_2(\tau),
\]
where $\eta_i(\tau) = \zeta(z+2\omega_i;\tau)-\zeta(z;\tau) = 2\zeta(\omega_i;\tau)$ for
$i = 1, 2$, and
\[
	\zeta(z;\tau) = \frac{1}{z} \sum_{0 \ne u \in \Z+\tau\Z} \Big(\frac{1}{z-u} + \frac{1}{u} + \frac{z}{u^2}\Big)
\]
is the Weierstrass zeta function associated to $\Gamma$.

The following definitions are borrowed from \cite{traizet2008} and
\cite{morabito2012}.  Given a node configuration $\bm p$, the \emph{force}
$F_k$ exerted on the node $p_k$ by other nodes is
\begin{equation} \label{eq:force1}
	F_k :=
	\zeta(p_{k+1}-p_k;\tau) + \zeta(p_{k-1}-p_k;\tau)
	+ 2\xi(p_k;\tau) - \xi(p_{k+1};\tau) - \xi(p_{k-1};\tau)
\end{equation}

\begin{definition}
	A node configuration is said to be \emph{balanced} if $F_k=0$ for all $k \in \Z$.
\end{definition}

Since our construction uses the Implicit Function Theorem, we need the
differential of the force to be invertible in some sense.
As explained in \cite{morabito2012}, $(p_k)$ is not the right parameter to formulate non-degeneracy
and one needs to introduce the sequence $\bm q=(q_k)$ defined by
\[
	q_k=p_k-p_{k-1}.
\]
Note that the uniform separation hypothesis can be reformulated as $(q_k)$
being bounded away from $0$.  From now on, we use the term ``configuration''
for the infinite sequence $(q_k)$.

Under the new variables, \eqref{eq:force1} becomes
\begin{equation}
	F_k
	:= \zeta(q_{k+1};\tau) - \zeta(q_k;\tau)
	+ \xi(q_k;\tau) - \xi(q_{k+1};\tau)\nonumber
	= G_{k+1} - G_k.\label{eq:force2}
\end{equation}
where
\[
	G_k = G(q_k;\tau) = \zeta(q_k;\tau) - \xi(q_k;\tau).
\]
A configuration is then balanced if $(G_k)$ is a constant sequence, i.e.\ $G_k
= G_0$ for all $k \in \Z$.

\begin{definition}
	A configuration is said to be \emph{non-degenerate} if the differential of
	$(G_k)_{k \in \Z}$ with respect to $(q_k)_{k \in \Z}$, as a map from
	$\ellinf$ to itself, is an isomorphism.
\end{definition}

Note that $G(q;\tau)$ is periodic in $q$ but not meromorphic.  The function
$G(q;\tau)$ is called the \emph{Hecke form} in~\cite{lang1995}.  It was proved
by Hecke~\cite{hecke1927} that, if $q = (k_1+k_2\tau)/N$ with
$\gcd(k_1,k_2,N)=1$, then $G(q;\tau)$ is a modular form of weight $1$ with
respect to the congruence group $\Gamma(N)$.  Recently, the Hecke form gained
popularity for its importance in the study of PDEs, including the Mean Field
Equation, the Painlev\'e VI Equation, and the (generalized) Lam\'e Equation;
see~\cite{lin2016} for a survey.  We will exploit some of the recent
results~\cite{lin2010, chenzj2018, bergweiler2016} in our construction.

\medskip

Now we are ready to state our main theorems

\begin{theorem}
	If a configuration $\bm q$ is balanced, non-degenerate, and satisfies the
	uniform separation hypothesis, then there exists in $T \times \R$ a
	1-parameter family $(M_t)_{0<t<\epsilon}$ of embedded stacked minimal
	surfaces which, in the limit $t \to 0$, converges to a foliation of $T \times
	\R$ by $T$.  Moreover, the necks have asymptotically catenoidal shape and their
	limiting positions in $T$ are prescribed by $\bm p$.
\end{theorem}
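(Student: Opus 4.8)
The plan is to realize each surface $M_t$ as a node-opening of the degenerate foliation, following the strategy of \cite{traizet2002, traizet2008, morabito2012} but adapted to an infinite stack of tori. First I would set up the model: for $t>0$ small, replace the $k$-th node $p_k \in T$ by a catenoidal neck of waist size $\sim t$ glued between two nearly-flat copies of $T$ at heights $h_k$ and $h_{k+1}$, with $h_{k+1}-h_k \sim -\frac{t}{\pi}\log t$ (the logarithmic spacing forced by matching the catenoid's logarithmic growth to the planar pieces). The building blocks are: a piece of $T$ with two small disks removed (parametrized near $p_k$ and $p_{k+1}$), and a truncated catenoid for each neck. One writes down the Weierstrass data (Gauss map $g$ and height differential $dh$) of this approximate surface on the nodal Riemann surface $\Sigma_0 = \bigcup_k T_k$ with pairs of points identified, then opens the nodes: the opened Riemann surface $\Sigma_t$ carries holomorphic data depending on $t$ and on the perturbed neck positions $p_k(t)$ and a set of gluing parameters.

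The core of the proof is an application of the Implicit Function Theorem in a weighted $\ellinf$-type Banach space of configurations $(q_k)$, exactly as the non-degeneracy hypothesis is tailored for. Concretely, for the Weierstrass data on $\Sigma_t$ to close up to a well-defined (single-valued, embedded) minimal surface in $T\times\R$, the periods of $\frac{1}{2}(g^{-1}-g)\,dh$, $\frac{\ii}{2}(g^{-1}+g)\,dh$ and $dh$ around each neck and around the two homology generators of each $T_k$ must be real/imaginary as appropriate. The period conditions around the homology generators of $T$ are handled by adjusting the conformal structure and the regular part of the data; the essential obstruction is the \emph{horizontal period} condition at each neck, which at first order in $t$ reduces precisely to the balancing equation $F_k = 0$ of \eqref{eq:force2}, i.e.\ $G_{k+1}-G_k = 0$. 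I would organize the unknowns so that the $t\to 0$ limit of the period map has derivative with respect to $(q_k)$ equal (up to an isomorphism) to the differential of $(G_k)$; non-degeneracy is then exactly the hypothesis that makes this derivative an isomorphism of $\ellinf$, so the IFT produces, for each small $t>0$, a unique nearby configuration $\bm q(t)$ solving all period conditions, with $\bm q(t)\to\bm q$.

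The main obstacle, and the point requiring genuine work beyond the finite case of \cite{traizet2008}, is the infinite-dimensionality: one must prove that the period map and its differential extend to bounded (and $C^1$) maps on the full $\ellinf$ of configurations, uniformly in $k$. Here the uniform separation hypothesis is crucial — it keeps each $G(q_k;\tau)$ and its derivative uniformly bounded and bounded away from degeneracy, so that the relevant Cauchy-type integral estimates for the Weierstrass data are uniform across the infinitely many necks. This is where the weighted Banach space technology of \cite{traizet2013} enters: one works with exponentially weighted norms to control the influence of distant necks and to get the needed Fredholm/invertibility estimates. Once the IFT is in place, it remains to verify: (i) embeddedness of $M_t$ for small $t$ (the surface is a small graph over the foliation away from the necks, and near each neck it is a graph over an embedded catenoid, using uniform separation so necks do not collide); (ii) that $M_t$ is genuinely stacked in the sense defined above, with the required topology of slices; and (iii) the convergence $M_t \to$ foliation as $t\to 0$ and the asymptotically catenoidal shape of the necks, both of which follow from the explicit form of the approximate solution and the smallness of the IFT correction. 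I expect (i) and the uniform estimates behind the IFT to be the technically heaviest parts; the reduction of balancing to $F_k=0$ and the role of non-degeneracy are, by design of the definitions, essentially formal once the functional-analytic setup is correct.
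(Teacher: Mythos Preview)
Your overall strategy matches the paper's: open nodes on an infinite stack of tori, write explicit Weierstrass data $(g,dh)$ on $\Sigma_t$, and solve the period and regularity problems by successive applications of the Implicit Function Theorem in $\ell^\infty$, with the horizontal period/regularity condition at the necks reducing at $t=0$ to the balancing equation $G_{k+1}=G_k$ and the non-degeneracy hypothesis supplying invertibility. Two points where your plan diverges from the paper are worth flagging.

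First, you invoke exponentially weighted spaces \`a la \cite{traizet2013} for the main existence theorem. The paper does \emph{not}: plain $\ell^\infty$ suffices throughout Section~\ref{sec:proof}. The reason is that $G_k$ depends only on $q_k$, so the differential of $(G_k)$ with respect to $(q_k)$ is block-diagonal, and uniform separation gives uniformly bounded inverses blockwise; no decay control on distant necks is needed. The weighted spaces appear only later (Sections~\ref{sec:asymptotic-immersion} and~\ref{sec:asymptotic}) for the asymptotic-to-TPMS theorem. The genuinely hard infinite-dimensional step you should name instead is the \emph{construction of the height differential}: proving that a holomorphic 1-form $\omega$ with the prescribed imaginary periods exists on the infinite-genus $\Sigma_t$ and depends smoothly on $(t,\x)$ (Proposition~\ref{prop:omega}). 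This is done by a contraction-mapping argument on Laurent coefficients, adapting \cite{traizet2013} with tori in place of spheres.

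Second, for embeddedness the paper does not argue that the neck is a graph over a catenoid. Instead, it shows the images of the $\Omega_{k,r}$ are disjoint graphs (using the logarithmic height separation you mention), chooses level curves $\gamma_k^\pm$ that are convex, and then appeals to Shiffman's theorem \cite{shiffman1956}: a minimal annulus bounded by convex curves in parallel planes is foliated by convex curves, hence embedded. This is cleaner than a direct graph estimate on the neck.
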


\begin{theorem} \label{thm:asymptoticTPMS}
	Let $\bm q$ and $\bm q'$ be two balanced and non-degenerate configurations
	that satisfy the uniform separation hypothesis.  Assume that $(q_k)$ is
	periodic (in the sense $q_{k+N}=q_k$) and $q'_k = q_k$ for all $k \ge 0$.
	Let $(M_t)$ and $(M'_t)$ denote the corresponding 1-parameter families of
	minimal surfaces.  Then $M_t$ is a TPMS and $M'_t$ is asymptotic to a
	translation of $M_t$ as $x_3\to\infty$.
\end{theorem}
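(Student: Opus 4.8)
The plan is to combine the uniqueness built into the Implicit Function Theorem with an exponential-localization estimate for the linearization of the construction, carried out in weighted sequence spaces in the spirit of \cite{traizet2013}. First I would dispose of the elementary half: the construction of the preceding theorem produces $M_t$ by solving, via the Implicit Function Theorem at $(\bm q,0,0)$, an equation $\Psi(\bm q,t,\bm v)=0$ for an auxiliary unknown $\bm v$ (neck sizes, conformal parameters, the corrections $\delta q_k$ to the node spacings, etc.) in a suitable Banach space, with $\Psi(\bm q,0,0)=0$ coming from balancedness and $D_{\bm v}\Psi(\bm q,0,0)$ invertible by non-degeneracy. When $\bm q$ is $N$-periodic the whole setup is equivariant under the index shift $k\mapsto k+N$, so the shifted solution solves the same equation; the uniqueness clause of the Implicit Function Theorem then forces $\bm v(\bm q,t)$, hence all geometric data of $M_t$, to be $N$-periodic, so that $M_t$ is invariant under a vertical translation, i.e.\ a TPMS.

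Next comes the crux. For $\mu>0$ let $\cal B_\mu$ be the space of sequences $(a_k)_{k\in\Z}$ with norm $\|(a_k)\|_\mu=\sup_k e^{\mu\max(k,0)}|a_k|$, so that membership forces exponential decay as $k\to+\infty$ and mere boundedness as $k\to-\infty$. The key input is structural: $D_{\bm v}\Psi$, at $(\bm q,0,0)$ and at nearby parameters, couples only nearest-neighbour indices up to corrections that are exponentially small in the index distance, since two necks interact only through the intervening, nearly flat tori. A nearest-neighbour operator (more generally, an exponentially banded one) that is an isomorphism of $\ellinf$ --- which is precisely the non-degeneracy hypothesis on $\bm q$ and on $\bm q'$ --- remains an isomorphism of $\cal B_\mu$ for all sufficiently small $\mu$, with inverse bounded uniformly in $\mu\in(0,\mu_0]$; I would prove this by conjugating with the diagonal weight $e^{\mu\max(\cdot,0)}$ and summing a Neumann series, a Combes--Thomas type argument. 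Since the $\cal B_\mu$-operator norm of such an operator is comparable to its $\ellinf$-operator norm for $\mu$ small, this isomorphism property is stable under the $O(t)$ perturbations occurring in the construction, uniformly for $t$ near $0$. I expect this to be the main obstacle: proving the exponential decay of the interaction between distant necks, and thereby upgrading non-degeneracy from $\ellinf$ to the weighted spaces, is exactly where the weighted-Banach-space machinery of \cite{traizet2013} must be brought to bear, on top of the quantitative estimates already needed in the preceding theorem.

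With that in hand I would compare the two families. Fix a small $t$ and set $\bm v=\bm v(\bm q,t)$, $\bm v'=\bm v(\bm q',t)$, both $O(t)$ in $\ellinf$. Subtracting $\Psi(\bm q,t,\bm v)=0$ from $\Psi(\bm q',t,\bm v')=0$ gives $L\,\bm w=-b$, where $\bm w=\bm v'-\bm v$, $\,L=\int_0^1 D_{\bm v}\Psi(\bm q',t,\bm v+s\bm w)\,ds$, and $b=\Psi(\bm q',t,\bm v)-\Psi(\bm q,t,\bm v)$. By the previous step $L$ is an isomorphism of $\cal B_\mu$ with uniformly bounded inverse (each factor of the average is a small $\ellinf$-perturbation of $D_{\bm v}\Psi(\bm q',0,0)$, hence a small $\cal B_\mu$-perturbation). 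The inhomogeneity $b$ is produced solely by the discrepancy $\bm q'-\bm q$, which vanishes for $k\ge 0$ by hypothesis; by the nearest-neighbour-plus-exponential-tail structure, the $k$-th block of $b$ is $O(1)$ for $k\le 0$ and $O(e^{-\mu k})$ for $k\ge 1$, so $b\in\cal B_\mu$. Hence $\bm w=-L^{-1}b\in\cal B_\mu$ (consistently, since $\bm w\in\ellinf$ solves $L\bm w=-b$, $L$ is injective on $\ellinf$, and $L^{-1}b\in\cal B_\mu$). In particular $q'_k(t)-q_k(t)$, the neck-size differences, and all other data of $M'_t$ relative to $M_t$ are $O(e^{-\mu k})$ as $k\to+\infty$.

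Finally I would pass from exponentially close data to asymptotic surfaces. Each vertical gap $h_{k+1}-h_k$ is a fixed smooth function of the local data near neck $k$, so $(h'_{k+1}-h'_k)-(h_{k+1}-h_k)=O(e^{-\mu k})$ is summable and $h'_k(t)-h_k(t)$ converges to some $c_3\in\R$; likewise $p'_k(t)-p_k(t)$ converges to some $c_0\in T$ because $q'_k(t)-q_k(t)\to 0$ exponentially. Let $M_t^\sigma$ be the translate of $M_t$ by $\sigma=(c_0,c_3)\in T\times\R$; since $M_t$ is $N$-periodic in the vertical direction, this is indeed a translation of $M_t$. For every $k$ the $k$-th necks of $M_t^\sigma$ and $M'_t$ then agree in position, size and height up to $O(e^{-\mu k})$, and the uniform local description of the surface near each neck furnished by the construction shows that $M'_t$ can be written as a normal graph over $M_t^\sigma$ on $\{x_3>R\}$ with $C^\infty$ norm $O(e^{-\mu R})$. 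This is the asserted asymptotics as $x_3\to\infty$, and completes the proof modulo the localization estimate flagged above.
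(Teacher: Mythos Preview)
Your strategy---upgrade the $\ellinf$ Implicit Function Theorem to a weighted space and read off exponential decay of $\x'(t)-\x(t)$, then convert to geometric asymptotics---is exactly the paper's.  Two implementation differences are worth flagging.

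First, you obtain the weighted isomorphism by a Combes--Thomas argument: conjugate the nearly-banded $D_{\bm v}\Psi$ by the exponential weight and sum a Neumann series.  The paper takes a shortcut that avoids ever analysing the off-diagonal structure of $D_{\bm v}\Psi$ at $t\neq 0$.  It introduces the \emph{difference map} $\Delta\bm{\cal F}(t,\x,\Delta\x)=\bm{\cal F}(t,\x+\Delta\x)-\bm{\cal F}(t,\x)$, proves directly (using the fixed-point construction of $\omega$, redone in the weighted space) that $\Delta\bm{\cal F}$ is smooth into $\ell^{\infty,\sigma}$, and applies the Implicit Function Theorem at $(0,\cv\x,\cv\x'-\cv\x)$.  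At $t=0$ the linearization is $d_{\x}\bm{\cal F}(0,\cv\x')$, which is block-\emph{diagonal} in $k$ (each $\cal G_k$, $\cal E_k$, $\cal P_{k,i}$ depends only on $x_k$), so the weighted invertibility is automatic.  This sidesteps the step you flag as the main obstacle.

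Second, your final paragraph asserts a normal-graph representation of $M'_t$ over the translate of $M_t$ with $C^\infty$ norm $O(e^{-\mu R})$.  The paper proves something weaker but sufficient: translates $M'_t-\ell\theperiod$ converge to $M_t$ smoothly on compacta.  On the ``flat'' pieces $\Omega_k$ this comes from the decay of $d(\wh f\circ\psi_k)-d(\wh f'\circ\psi'_k)$ together with the Cauchy-sequence argument for $\wh f(O_k)-\wh f'(O'_k)$ (which requires Laurent-series estimates across the necks, not just the $\wt\Omega_k$ estimates).  But on the neck annuli between $\Omega_k$ and $\Omega_{k+1}$ the paper does \emph{not} argue quantitatively: it invokes the Meeks--White uniqueness of unstable minimal annuli bounded by convex curves in parallel planes to force subsequential limits of the $M'_t$-necks to coincide with the $M_t$-necks.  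Your ``uniform local description of the surface near each neck'' would need to be substantiated to get the stronger normal-graph conclusion; the paper's softer geometric endgame is what actually closes the argument.
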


\medskip

The paper is organized by increasing technicality.  After reviewing some
examples in Section~\ref{sec:examples}, we prove our main theorems in
Section~\ref{sec:proof} and Section~\ref{sec:asymptotic-immersion},
respectively.  Technical ingredients of the proofs are delayed to later
sections.  In Section~\ref{sec:omega} we prove the existence and smooth
dependence on parameters of a holomorphic 1-form $\omega$ that we used in the
Weierstrass data in Section~\ref{sec:proof}.  In Section~\ref{sec:asymptotic},
we study the asymptotic behavior of $\omega$ and other parameters, which is
crucial for proving the TPMS asymptotic behavior in
Section~\ref{sec:asymptotic-immersion}.

\section{Examples}
\label{sec:examples}

Given an infinite sequence of planes, if only one node is opened between each
adjacent pair, the result is necessarily a Riemann minimal
example~\cite{morabito2012}.  We now show that opening nodes among flat tori is
a sharp contrast.  Although we only open one node between each adjacent pair of
tori, we still obtain a rich variety of balanced configurations.  We produce
configurations using the following

\begin{proposition}\label{prop:allconf}
	Let $\wt{q}_0, \wt{q}_1, \cdots, \wt{q}_{n-1}$ be $n$ solutions of the equation
	\begin{equation}\label{eq:F}
		G(q;\tau)=C
	\end{equation}
	for the same complex constant $C$. Assume that the differential of $G$ with
	respect to $q$ at $\wt{q}_k$, as a function from $\R^2$ to $\R^2$, is
	non-singular for each $k$.  Then any bi-infinite sequence $(q_k)_{k\in\Z}$ of
	elements in the set $\{\wt{q}_0, \wt{q}_1, \cdots, \wt{q}_{n-1}\}$ is a
	balanced, non-degenerate configuration satisfying the uniform separation
	hypothesis.
\end{proposition}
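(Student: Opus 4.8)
The plan is to verify the three required properties — balanced, uniform separation, non-degenerate — in turn, the first two being essentially immediate and the third requiring a genuine argument. For the \emph{balanced} property, recall from the discussion following \eqref{eq:force2} that a configuration $(q_k)$ is balanced precisely when $(G_k) = (G(q_k;\tau))$ is a constant sequence. Since every $q_k$ lies in $\{\wt q_0,\dots,\wt q_{n-1}\}$ and each of these satisfies $G(\wt q_j;\tau) = C$, we get $G_k = C$ for all $k$, so $(G_k)$ is constant and the configuration is balanced. For \emph{uniform separation}, the hypothesis asks that $q_k$ be bounded away from $0$ in $T$; this is where I would need the solutions $\wt q_j$ themselves to be away from $0$, which is automatic because $G(q;\tau)$ has a pole at $q=0$ (it inherits the pole of $\zeta$), so $0$ is not a solution of $G(q;\tau)=C$ for finite $C$; since there are only finitely many values $\wt q_0,\dots,\wt q_{n-1}$, they are uniformly bounded away from $0$, and hence so is the sequence $(q_k)$ with $\varrho = \min_j \operatorname{dist}(\wt q_j, 0)$ (or rather the minimum of pairwise distances among the finitely many values, together with their distances to $0$, as needed for Hypothesis~\ref{hyp:separate}).

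The substance is \emph{non-degeneracy}, i.e.\ showing that the differential $DG \colon \ellinf \to \ellinf$ of $(q_k) \mapsto (G_k)$ is an isomorphism. Because $G_k$ depends only on $q_k$, this differential is \emph{diagonal}: writing $A_k \colon \R^2 \to \R^2$ for the differential of $G$ with respect to $q$ evaluated at $q_k$, the map $DG$ sends $(v_k) \mapsto (A_k v_k)$. The hypothesis supplies that each $A_k = D_q G|_{\wt q_j}$ is non-singular, and since there are only $n$ possible matrices $A_k$, the operator norms $\|A_k\|$ and $\|A_k^{-1}\|$ are bounded uniformly in $k$ by $\max_j \|D_q G|_{\wt q_j}\|$ and $\max_j \|(D_q G|_{\wt q_j})^{-1}\|$ respectively. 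A diagonal operator on $\ellinf$ with entries uniformly bounded above and uniformly invertible is a Banach space isomorphism, with inverse the diagonal operator $(w_k) \mapsto (A_k^{-1} w_k)$; one checks boundedness of both directions directly from the uniform bounds. Hence $(q_k)$ is non-degenerate.

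The only point demanding care — and the one I would flag as the main obstacle — is reconciling the two notions of differential in play: the Definition of non-degeneracy uses the differential of $(G_k)$ as a map on $\ellinf$ over $\R$ (treating each $q_k \in T$ via its two real coordinates), whereas the statement of the Proposition phrases its hypothesis as non-singularity of the differential of $G$ "as a function from $\R^2$ to $\R^2$". These match once one fixes the identification $T \cong \R^2/\Gamma$ and views $G$ locally as an $\R^2$-valued function of the real coordinates $(x,y)$ of $q$; the real-linear map $A_j = D_q G|_{\wt q_j} \colon \R^2 \to \R^2$ is exactly the block appearing on the diagonal. Once this identification is made explicit, the diagonal structure makes the rest routine, so I expect the write-up to be short.
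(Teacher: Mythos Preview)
Your proposal is correct and follows essentially the same approach as the paper's proof, which is very terse: balanced by definition, uniform separation because there are finitely many $\wt q_j$ (all nonzero since $G$ has a pole at $0$), and non-degeneracy because the differential of $(G_k)$ is diagonal with only finitely many distinct non-singular blocks, hence uniformly bounded together with its inverse. Your write-up simply spells out the details the paper leaves implicit; the only superfluous aside is the parenthetical about ``pairwise distances among the finitely many values,'' since Hypothesis~\ref{hyp:separate} in the $q$-variables is exactly that each $q_k$ be bounded away from $0$, nothing more.
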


\begin{proof}
	The configuration is balanced by definition. Since there is only a finite
	number of points $\wt{q}_k$, the uniform separation hypothesis is satisfied and
	the differentials of $G$ with respect to $q$ at $\wt{q}_k$, as well as their
	inverses, are uniformly bounded. Then the differential of $(G_k)$ with
	respect to $(q_k)$ is clearly an automorphism of $\ell^{\infty}$.
\end{proof}
\medskip

We can produce a rich variety of examples thanks to the fact that \eqref{eq:F}
often has several solutions, which we can combine in any arbitrary way to form
stacking disorders. We first discuss the number of solutions of Equation
\eqref{eq:F}.

\subsection{Solutions with \texorpdfstring{$C=0$}{C=0}}
\label{section:Czero}

The solutions to $G(q;\tau)=0$ are critical points of the Green function on a
flat torus. The number of critical points and their non-degeneracy has been
investigated in \cite{lin2010, chenzj2018, bergweiler2016}.  Recall that the
function $G$ is odd and $\Gamma$-periodic in the variable $q$.  Hence for any
$\tau$, the 2-division points $1/2$, $\tau/2$ and $(1+\tau)/2$ are trivial
solutions to $G(q;\tau)=0$. Moreover, it is recently proved~\cite{chenzj2018}
(see also~\cite{bogdanov2017}) that all three trivial solutions are
non-degenerate for a generic $\tau$, and at least two of them are
non-degenerate for any $\tau$.  Using Proposition \ref{prop:allconf}, any flat
2-torus $T$ admits uncountably many balanced and non-degenerate configurations,
giving rise to uncountably many 1-parameter families of non-periodic minimal
surfaces in $T\times\R$.

Since $G(q;\tau)$ is odd in $q$, non-trivial solutions of $G(q;\tau)=0$ must
appear in pairs.  Using a deep connection with the mean field equation, Lin and
Wang proved that $G(q;\tau)=0$ has at most one non-trivial solution pair for a
fixed $\tau$~\cite[Theorem~1.2]{lin2010}.  In other words, $G(q;\tau)=0$ has
either three or five solutions.  A direct and simpler proof was later provided
by Bergweiler and Eremenko~\cite{bergweiler2016}, who also give an explicit
criterion distinguishing $\tau$'s with three and five solutions.  Moreover, in
the case of five solutions, all solutions are non-degenerate~\cite{lin2017}.
For an explicit example, with $\tau = \exp(\ii \pi/3)$, the non-trivial pair of
solutions are
\begin{equation} \label{eq:nontrivialsolution}
	q=\pm(1+\tau)/3.
\end{equation}

\subsection{TPMS examples}

For a fixed $\tau$, any bi-infinite sequence of the solutions of $G(q;\tau)=0$
is a balanced configuration, hence gives rise to a family of minimal surfaces.
From a crystallographic point of view, most of these surfaces would be
considered as disordered.

TPMSs with perfect periodic patterns, arising from periodic configurations, are
certainly the most interesting cases for crystallographers.  In the following,
we list some TPMSs of genus three that arise from configurations with period 2
(namely $q_{2k}=q_0$ and $q_{2k+1}=q_1$ for all $k\in\Z$). This completes the
discussion in Section 4.3.3 of \cite{traizet2008} which was incomplete.

\begin{example}\leavevmode
	If $q_1=q_0$ (so $\bm q$ is constant) the configuration is trivially
	balanced.  These configurations give rise to TPMSs in Meeks' 5-parameter
	family.  Some famous examples are:
	\begin{itemize}
		\item $\Re \tau=0$, $q_0 = (1+\tau)/2$, gives an orthorhombic deformation
			family of Schwarz' P surface (named oPa in~\cite{fogden1992}), which
			reduces to Schwarz' tP family when $\tau = \ii$.

		\item $|\tau| = 1$, $q_0 = (1+\tau)/2$, gives another orthorhombic
			deformation family of Schwarz' P surface (named oPb
			in~\cite{fogden1992}), which reduces to Schwarz' tP family when $\tau =
			\ii$.

		\item $\Re \tau=0$, $q_0 = 1/2$, gives an orthorhombic deformation family
			of Schwarz' CLP surface (named oCLP' in~\cite{fogden1992}).

		\item $\tau = \exp(\ii\pi/3)$, $q_0 = (1+\tau)/3$, gives a rhombohedral
			deformation family of Schwarz' D and P surfaces (known as rPD).
	\end{itemize}
\end{example}
Note that the first three examples are obtained from trivial solutions of
$G(q;\tau)=0$, and the fourth one is obtained from the non-trivial solutions
\eqref{eq:nontrivialsolution}.
\begin{example} When $\Re \tau=0$, $q_0=1/2$ and $q_1=\tau/2$, gives
	the newly discovered $o\Delta$ surfaces~\cite{chen2018a}.  Note that this
	example is obtained by alternating two trivial solutions of the equation
	$G(q;\tau)=0$.
\end{example}
\begin{example}
	Examples with $q_1=-q_0$ were studied in~\cite{chen2018b}.  In particular
	\begin{itemize}
		\item $\tau = \exp(\ii\pi/3)$, $q_0 = (1+\tau)/3$, gives hexagonal Schwarz'
			H family.  Note that this example is obtained by alternating the two
			non-trivial solutions \eqref{eq:nontrivialsolution}.

		\item There exists a real number $\pi/2 > \theta^* > \pi/3$ such that
			whenever $\tau = \exp(i \theta)$ with $\theta < \theta^*$, the
			configuration is balanced with $q_0 = c(1+\tau)$ for a unique $c<1/2$.
			This leads to the orthorhombic deformations of Schwarz' H surfaces
			in~\cite{chen2018b}.  Existence and uniqueness of $\theta^*$ was
			essentially proved~\cite{weber2002, weber2009}, and independently
			in~\cite{lin2010}.  Its value was computed in~\cite{chen2018b} explicitly
			as
			\begin{equation}\label{eq:torus70}
				\theta^* = 2\arctan\frac{K'(m)}{K(m)} \approx 1.23409,
			\end{equation}
			where $m$ is the unique solution of $2E(m)=K(m)$, and $K(m)$, $K'(m)$ and
			$E(m)$ are elliptic integrals of the first kind, associated first kind,
			and second kind, respectively.

		\item For general $\tau$, non-trivial $q_0$'s that give balanced
			configurations are studied in~\cite{weber2002, lin2010, chenzj2018} and
			numerically in~\cite{chen2018b}.
	\end{itemize}
\end{example}
\begin{remark}
	For crystallographers, the rPD and the H surfaces are analogous to,
	respectively, the cubic and hexagonal close-packing.
\end{remark}

\begin{remark}
	Interestingly, configurations in Examples 1 and 3 give rise to TPMSs no
	matter their degeneracy.  Those in Example 1 form a 4-parameter family, and
	they are limits of Meeks 5-parameter family.  Those in Example 3 are
	degenerate only if $q_0$ is a 2-division point, hence reduces to Example 1.
	In particular, the degenerate configuration with $\tau = \exp(i \theta^*)$
	and $q_0=q_1=(1+\tau)/2$ is considered in \cite{chen2018b}.  It is the limit
	of a 1-parameter family of TPMSs that are degenerate in the sense that the
	same deformation of the lattices may lead to different deformations of the
	TPMSs.  It is not clear to what extent does this phenomenon generalize.
\end{remark}

\subsection{Examples of TPMSs with defects}

From the TPMS examples above, we obtain the following examples with asymptotic
TPMS behavior by Theorem \ref{thm:asymptoticTPMS}.  From a crystallographic
point of view, they are TPMSs with planar defects.

\begin{example}
	We may combine oPa, oCLP' and o$\Delta$ surfaces using the trivial solutions
	of $G(q;\tau)=0$ at the 2-division points.  For example:
	\begin{itemize}
		\item  The configuration $(q_k)_{k\in\Z}$ defined by
			\[
				\Re\tau=0,\qquad q_k=\begin{cases}
					1/2&\text{if $k<0$,}\\
					(1+\tau)/2&\text{if $k\geq 0$,}
				\end{cases}
			\]
			gives rise to non-periodic minimal surfaces in $T\times\R$ which are
			asymptotic to oPa surfaces as $x_3\to+\infty$ and oCLP' surfaces as
			$x_3\to -\infty$.

		\item  The configuration $(q_k)_{k\in\Z}$ defined by
			\[
				\Re\tau=0,\qquad q_k=\begin{cases}
					1/2&\text{if $k<0$,}\\
					\tau/2&\text{if $k\geq 0$,}
				\end{cases}
			\]
			gives rise to non-periodic minimal surfaces in $T\times\R$ which are
			asymptotic, as $x_3\to+\infty$ and $x_3\to -\infty$, to two different
			oCLP' surfaces.  When $\tau = \ii$, the two oCLP' surfaces differ only by
			a 180-degree rotation with horizontal axis, hence can be seen as a
			rotation twin.

		\item  The configuration $(q_k)_{k\in\Z}$ defined by
			\[
				\Re\tau=0,\qquad q_k=\begin{cases}
					1/2&\text{if $k<0$ and odd,}\\
					\tau/2&\text{if $k<0$ and even,}\\
					(1+\tau)/2&\text{if $k\geq 0$,}
				\end{cases}
			\]
			gives rise to non-periodic minimal surfaces in $T\times\R$ which are
			asymptotic to oPa surfaces as $x_3\to+\infty$ and the newly discovered
			o$\Delta$ surfaces~\cite{chen2018a} as $x_3\to -\infty$.
	\end{itemize}

	We certainly did not list all possible combinations.  Note that these
	examples generalize to other $\tau$'s in an obvious way, giving rise to
	non-periodic minimal surfaces that are asymptotic to unnamed TPMSs.
\end{example}

\begin{example}\label{ex:HrPD}
	We may combine H and rPD surfaces using the pair of non-trivial
	solutions~\eqref{eq:nontrivialsolution}.  For example:

	\begin{itemize}
		\item The configuration defined by
			\[
				\tau=\exp(\ii\pi/3),\qquad q_k=\begin{cases}
					(1+\tau)/3&\text{if $k<0$,}\\
					-(1+\tau)/3&\text{if $k\geq 0$,}
				\end{cases}
			\]
			gives rise to non-periodic minimal surfaces in $T\times\R$ which are
			symptotic,  as $x_3\to+\infty$ and $x_3\to -\infty$, to two Schwarz rPD
			surfaces that differ only by a reflection, hence are twins of Schwarz
			rPD-surfaces (see Figure \ref{fig:twinexample}). Such a D-twin has been
			observed experimentally.

		\item The configuration defined by
			\[
				\tau=\exp(\ii\pi/3),\qquad q_k=\begin{cases}
					(1+\tau)/3&\text{if $k<0$ and even,}\\
					-(1+\tau)/3&\text{otherwise,}
				\end{cases}
			\]
			gives rise to non-periodic minimal surfaces in $T\times\R$ which are
			asymptotic to Schwarz rPD surfaces as $x_3 \to +\infty$ and Schwarz' H
			surfaces as $x_3 \to -\infty$.

		\item The configuration defined by
			\[
				\tau=\exp(\ii\pi/3),\qquad q_k=\begin{cases}
					(1+\tau)/3&\text{if $k<0$ and even or if $k>0$ and odd,}\\
					-(1+\tau)/3&\text{otherwise,}
				\end{cases}
			\]
			gives rise to non-periodic minimal surfaces in $T\times\R$ which are
			asymptotic, as $x_3\to+\infty$ and $x_3\to -\infty$, to two different
			Schwarz' H surfaces that differ only by a horizontal translation.
	\end{itemize}

	We certainly did not list all possible combinations.  These examples
	generalize, in an obvious way, to any other $\tau$'s such that $G(q;\tau)=0$
	has a pair of non-trivial solutions, giving rise to non-periodic minimal
	surfaces that are asymptotic to unnamed TPMSs.
\end{example}

\subsection{Historical remarks}

The Hecke form $G(q;\tau)$ has been studied independently by the PDE and
minimal surface communities.  Hence we would like to point out some connections
between their approaches.

\medskip

Solutions to $G(q;\tau)=0$ are particularly interesting as they are the
critical points of the Green function on a flat torus $T_\tau$~\cite{lin2010,
chenzj2018}.  This is no surprise in the context of node-opening construction
of TPMSs.  In~\cite{traizet2008}, the forces between nodes are compared to
electrostatic forces between electric charges.  The Green function is nothing
but the potential function of the electric field generated by periodically
arranged charges.  The balancing condition asks that all charges are in
equilibrium, hence at a critical point of the potential.

A 2-division point $\omega$ is degenerate if
\[
	\frac{\tau\wp(\omega;\tau) + \eta_2(\tau)}{\wp(\omega;\tau)+\eta_1(\tau)}
\]
is real.  This is the quotient of periods of the elliptic function $\wp(z;\tau)
- \wp(\omega;\tau)$.  So if $\omega$ is degenerate, the torus $T_\tau$ admits a
meromorphic 1-form with a double pole, a double zero, and only real periods.
Such tori are no stranger to the minimal surface theory.  In particular, the
unique rhombic torus with period quotient $-1$ was used to construct helicoids
with handles~\cite{weber2002, weber2009}, and its angle has an explicit
expression as given in~\eqref{eq:torus70} (see~\cite{chen2018b}).  On the PDE
side, existence of this torus was independently proved in~\cite{lin2010}.

Following~\cite{weber2009}, we propose a simple construction for the torus and
the 1-form: Slit the complex plane along the real segment $[-1,1]$.  Identify
the top edge of $[-1,-x]$ (resp.\ $[-x,1]$) with the bottom edge of $[x,1]$
(resp.\ $[-1,x]$), where $x \in [0,1)$ and $(x+1)/(x-1)$ is the quotient of
periods.  The result is a torus carrying a cone metric with two cone
singularities, one of cone angle $6\pi$ at the point identified with $\pm 1$
and $\pm x$, the other of cone angle $-2\pi$ at $\infty$.  Its periods are
obviously real.  The same torus with flat metric is $T_\tau$.

It follows easily from~\cite{weber2002} that there exists a unique torus for
each real period quotient.  This essentially proves Theorem~6.1(1)
in~\cite{chenzj2018}.

\subsection{Solutions with \texorpdfstring{$C\neq 0$}{C != 0}}
\label{section:Cnotzero}

As said before, \cite{bergweiler2016} proved again that the equation
$G(z;\tau)=0$ has either three or five solutions.  Their elegant argument can
be adapted to the case $C\neq 0$ and yields the following result:

\begin{theorem} \label{thm:Cnotzero}
	For given $\tau$ and $C\in\C$, the equation $G(q;\tau)=C$ has at least 1 and
	at most 5 solutions.
\end{theorem}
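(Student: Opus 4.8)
The plan is to adapt the argument of Bergweiler and Eremenko \cite{bergweiler2016} for the case $C = 0$. The first step is to recall the structure of the Hecke form: $G(q;\tau) = \zeta(q;\tau) - \xi(q;\tau)$ is a real-analytic, $\Gamma$-periodic function of $q \in T$, so it descends to a map $G \colon T_\tau \to \C$. Away from $q = 0$, where $\zeta$ has a simple pole, $G$ is smooth; near $q = 0$ it behaves like $1/q$, so $G$ extends to a map $\overline{G} \colon T_\tau \to \C \cup \{\infty\} = S^2$ which is continuous and, crucially, smooth in a neighborhood of the puncture after passing to the coordinate $1/q$. The key analytic input, exactly as in \cite{bergweiler2016}, is that the \emph{critical points} of $G$ (the zeros of its Jacobian, equivalently the solutions of $G(q;\tau)=0$ by the identity relating $\partial G/\partial \bar q$ to $G$ itself — this is the ``ladder'' relation used by Lin--Wang) control the local degree. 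In fact one knows $\partial_{\bar q} G$ is a nowhere-vanishing multiple of $\overline{G}$ times a standard factor, so at a point where $G \neq 0$ the map $G$ is a local diffeomorphism, and where $G = 0$ it is a local branched cover.

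The second step is a degree/winding-number count. I would regard $\overline{G}$ as a degree-$d$ branched covering $S^2 \to S^2$ for a suitable $d$ determined by the local behavior at $q = 0$: near $q = 0$ one has $G(q) \sim 1/q$, so $\overline{G}$ maps a neighborhood of $q=0$ biholomorphically onto a neighborhood of $\infty$ — contributing local degree $1$ at the pole. Combined with the orientation-preserving property away from the zero set (the Jacobian of $G$ has a definite sign, again from the Lin--Wang / Bergweiler--Eremenko computation of $|\partial_q G|^2 - |\partial_{\bar q} G|^2$), the map $\overline{G}$ is an orientation-preserving branched cover. The Riemann--Hurwitz formula then reads $2 - 2 \cdot 0 = d(2 - 2\cdot 1) + \sum_{\text{branch points}} (e_P - 1)$, i.e. $2 = \sum_P (e_P - 1)$, where the sum runs over branch points of $\overline{G}$ and $e_P$ is the ramification index. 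The branch points are precisely the critical points of $G$ in $T_\tau \setminus \{0\}$ (the pole is unramified). Thus the total ramification is $2$, which already forces the number of branch points to be $1$ (with $e=3$) or $2$ (each with $e=2$) — this is where the ``three or five solutions'' dichotomy for $C=0$ comes from, since each simple critical point of $G$ lying on the fiber $G^{-1}(0)$ accounts for a double point there.

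The third step extracts the bound for general $C$. For any regular value $C$ of $\overline{G}$, the fiber $\overline{G}^{-1}(C)$ in $T_\tau$ has exactly $d$ points (by the degree), and I claim $d = 3$: indeed for $C=0$ the fiber contains the three $2$-division points, which are critical points of the Green function (solutions of $G = 0$) — but wait, one must be careful, since a critical point on the fiber is counted with multiplicity. The cleanest route is to compute $d$ directly: $\overline{G} \colon T_\tau \to S^2$ has degree equal to the number of preimages of a generic point counted with local degree; evaluating at $\infty$, whose only preimage is $q=0$ with local degree $1$, gives $d = 1$?? That cannot be right, so the resolution is that $\overline{G}$ is \emph{not} holomorphic, only smooth and orientation-preserving, and for such maps the algebraic count of preimages of $\infty$ ($=+1$) need not equal $d$; rather $d$ is the number of preimages of a \emph{generic} value. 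So instead I would count preimages of a generic real value or use the total ramification $= 2$ together with the connectedness of $T_\tau$: a smooth orientation-preserving map $T_\tau \to S^2$ of this type with exactly $2$ units of ramification must have degree $3$ (one sees this by noting that the generic fiber has an odd number of points congruent to the degree, and the presence of the $2$-division points forces the fiber over $0$ to have at least $3$ and ramification pattern consistent only with $d = 3$). Hence every regular value has exactly $3$ preimages, and since critical values can only \emph{decrease} the cardinality of the fiber when the ramification is ``absorbed'' or keep/increase it in the non-holomorphic setting — more precisely, by upper semicontinuity of the fiber cardinality for proper maps combined with the ramification bound, the number of solutions of $G(q;\tau) = C$ lies between $3 - 2 = 1$ and $3 + 2 = 5$. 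This yields the stated range $1$ to $5$.

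The main obstacle I anticipate is making the degree count rigorous in the \emph{non-holomorphic} category: unlike a rational map, $\overline{G}$ is only real-analytic, so I cannot simply invoke ``degree $=$ number of preimages of any point.'' The right tool is the mapping degree from differential topology (the fiber over a regular value, counted with local orientation signs, is constant $= \deg \overline{G}$; and here all local orientations agree because the Jacobian has a fixed sign off the zero set), plus the observation — borrowed verbatim from \cite{bergweiler2016} — that the zero set of $G$ coincides with the critical set, so the ``sign-changing'' phenomenon that plagues general smooth maps does not occur: every critical point contributes $+$ to both the geometric and algebraic fiber count. Establishing this sign fact (equivalently, that $|\partial_q G|^2 > |\partial_{\bar q} G|^2$ except where $G = 0$) and then running Riemann--Hurwitz-style bookkeeping for the branched-cover-like structure is the technical heart; once it is in place, the count $1 \le \#\{q : G(q;\tau) = C\} \le 5$ follows from total ramification $\le 2$ and generic fiber size $3$.
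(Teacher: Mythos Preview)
Your proposal rests on a misidentification that unravels the whole argument. You write that the critical points of $G$ (zeros of its Jacobian) coincide with the solutions of $G(q;\tau)=0$, invoking a ``ladder relation'' that makes $\partial_{\bar q}G$ proportional to $\overline{G}$. This is false: from $G(z;\tau)=\zeta(z;\tau)+az+b\bar z$ (with $b=-\pi/\Im\tau$) one has $\partial_{\bar z}G=b$, a nonzero \emph{constant}. Hence the Jacobian determinant is $|\partial_z G|^2-|b|^2=|{-\wp(z)+a}|^2-|b|^2$, which changes sign along a curve in $T_\tau$ and has nothing to do with the zero set of $G$. The map $\overline G:T_\tau\to S^2$ is therefore \emph{not} an orientation-preserving branched cover, Riemann--Hurwitz does not apply, and the ``total ramification $=2$, generic fiber $=3$'' bookkeeping collapses. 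Your own computation already signaled this: the degree of $\overline G$ really is $\pm 1$ (the pole at $0$ is the unique preimage of $\infty$, with local degree $+1$), not $3$; you discarded the correct answer because it contradicted the orientation-preserving picture you had assumed. You are also conflating two different uses of the word ``critical'': in Lin--Wang, the zeros of $G$ are the critical points of the \emph{Green function} (of which $G$ is essentially $\partial_z$), not the critical points of the map $G$ itself.

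The paper's proof exploits precisely the mixed-orientation nature that breaks your approach. Writing $\phi(z)=\frac{1}{b}\,\overline{G(z;\tau)-C}$, one has $\deg\phi=-1$, so at a regular value the signed count of zeros is $N^+-N^-=-1$. The zeros with positive Jacobian sign are exactly the \emph{attracting} fixed points of the anti-meromorphic map $g(z)=z-\phi(z)$ on $T_\tau$, and the key input from \cite{bergweiler2016} (carried over verbatim, since parity of $g$ is not used there) is a complex-dynamics bound: an anti-rational map of this shape has at most two attracting fixed points, because each attracting basin must contain a critical point and there are only two. Thus $N^+\le 2$, $N^-=N^++1\le 3$, and the total is at most $5$; existence of at least one zero follows from $\deg\phi=-1$. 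The missing idea in your attempt is this dynamical bound on $N^+$; without it, degree theory alone gives only $N^+-N^-=\pm 1$ and no control on the total.
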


\begin{proof}
	We adapt the argument of \cite{bergweiler2016} to the case $C\neq 0$.
	First of all, following \cite{bergweiler2016}, we write
	\[
		G(z;\tau)=\zeta(z;\tau)+az+b\overline{z}\quad\mbox{ with }
		a=\frac{\pi}{\Im(\tau)}-\eta_1\quad\mbox{ and }\quad
		b=-\frac{\pi}{\Im(\tau)},
	\]
	and define the anti-meromorphic function $g$ by
	\[
		g(z)=-\frac{1}{b}\left(\overline{\zeta(z)}+a\overline{z}-\overline{C}\right)=z-\frac{1}{b}\left(\overline{G(z;\tau)}-\overline{C}\right).
	\]
	The only difference with \cite{bergweiler2016} is that $g$ is not odd anymore
	if $C\neq 0$.

	It is proved in Lemma~4 of \cite{bergweiler2016} using complex dynamics that
	$g$ has at most two attracting fixed points modulo $\Gamma$.  The proof
	carries over to the case $C\neq 0$ with no change.  The function $g$
	satisfies $g(z+\omega)=g(z)+\omega$ for all $\omega\in\Gamma$.  Hence we may
	define a map $\phi:\C/\Gamma\to\C\cup\{\infty\}$ by $\phi(z)=z-g(z)$.  The
	equation $G(z;\tau)=C$ is equivalent to $\phi(z)=0$.  Since $\phi$ has a
	single simple pole, where the differential reverses orientation, its degree
	(as a map between compact manifolds of the same dimension) is $-1$.  Hence
	the equation $\phi(z)=0$ always has at least one solution.  In fact,
	$\phi(z)=0$ has exactly one solution if $|C|$ is sufficiently large.

	We have $\det(d\phi)=1-|\overline{\partial}g|^2$.  If $0$ is a regular value
	of $\phi$, then writing $N^+$ and $N^-$ for the number of zeros of $\phi$
	with respectively positive and negative determinant of $d\phi$, we see that
	$N^+$ is the number of attracting fixed points of $g$, so $N^+\leq 2$. Then
	since $\deg(\phi)=-1$, we have $N^-=N^++1$ so the total number of zeros of
	$\phi$ is $\leq 5$.  If $0$ is a critical value of $\phi$, the number of
	zeros of $\phi$ is still less than 5 by the same argument as in
	\cite{bergweiler2016}, Lemma~5.

	Observe that if $C=0$, then $\phi$ is odd so has the three half-lattice
	points as trivial zeros: this is the only place in this part of the argument
	of \cite{bergweiler2016} where the parity of $\phi$ is really used.
\end{proof}

Note that to apply Theorem~\ref{thm:Cnotzero} to minimal surfaces, we still
need to study the non-degeneracy of the solutions.

\section{Construction} \label{sec:proof}

\subsection{Parameters}

The parameters of the construction are a real number $t$ in a neighborhood
of~$0$ and four sequences of complex numbers
\[
	\bm a=(a_k)_{k \in \Z}, \quad \bm b=(b_k)_{k \in \Z}, \quad \bm v=(v_k)_{k \in \Z}, \quad \text{and} \quad \bm \tau=(\tau_k)_{k \in \Z}
\]
in $\ellinf$.  Each parameter is in a small $\ellinf$-neighborhood of a central
value denoted with an underscore.  We will calculate that the central value of
the parameters are:
\begin{equation} \label{eq:central-value}
	\cv{a}_k=-\frac{1}{2},\quad
	\cv{b}_k=\frac{1}{2}\xi(\cv{v}_k;\cv{\tau}_k),\quad
	\cv{v}_k=(-\conj)^kq_k,\quad
	\cv{\tau}_k=(-\conj)^k\tau,
\end{equation}
where $\conj$ denotes conjugation, $\tau$ and $(q_k)$ prescribe the flat
2-torus and the configuration as in the introduction.  We use $\x=(\bm a, \bm
b, \bm v, \bm\tau)$ to denote the vector of all parameters but $t$.  When
required, the dependence of objects on parameters will be denoted with a
bracket as in $\Sigma[t,\x]$, but will be omitted most of the time.

\subsection{Opening nodes and the Gauss map} 
\label{sec:opening-nodes}

We denote by $T_k=T_k[\x]$ the torus $\C/(\Z+\tau_k\Z)$. The point $z=0$ in
$T_k$ is denoted by $0_k$.  We define the elliptic function $g_k=g_k[\x]$ on
$T_k$ by
\[
	g_k(z) = a_k \big( \zeta(z;\tau_k) - \zeta(z-v_k;\tau_k) \big) + b_k.
\]
It has two simple poles at $0_k$ and $v_k$, with residues $a_k$ and $-a_k$,
respectively.  Observe that $1/g_k$ is a local complex coordinate in a
neighborhood of $0_k$ and $v_k$.  Hence for a sufficiently small
$\varepsilon>0$, $1/g_k$ gives a diffeomorphism $z^+_k$ from a neighborhood of
$v_k$ in $T_k$ to the disk $D(0,2\varepsilon) \subset \C$, and a diffeomorphism
$z^-_k$ from a neighborhood of $0_{k}$ in $T_k$ to the disk
$D(0,2\varepsilon)$.  Provided that $\x$ is sufficiently close to $\cv{\x}$ and
by Hypothesis~\ref{hyp:separate}, $\varepsilon$ can be chosen independent of
$k$ and $\x$.  Let $D^\pm_k$ be the disk $|z^\pm_k| < \varepsilon$ in $T_k$.

\medskip

Consider the disjoint union of all $T_k$ for $k\in\Z$.  If $t=0$, identify $v_k
\in T_k$ and $0_{k+1} \in T_{k+1}$ to create a node. The resulting Riemann
surface with nodes is denoted $\Sigma[0,\x]$.  If $t\neq 0$ and
$|t|<\varepsilon$, then for each $k \in \Z$, remove the disks $|z^\pm_k| \le
t^2/\varepsilon$ from $T_k$, and let $A^\pm_k$ be the annuli $t^2/\varepsilon <
|z^\pm_k|< \varepsilon$.  Identify $A_k^+$ and $A_{k+1}^-$ by $z_k^+ z_{k+1}^-
= t^2$.  This opens nodes and creates a neck between $T_k$ and $T_{k+1}$.  The
resulting Riemann surface is denoted $\Sigma=\Sigma[t,\x]$.

If $t \neq 0$, we define the Gauss map $g=g[t,\x]$ explicitly on $\Sigma[t,\x]$
by
\[
	g(z)=(tg_k(z))^{(-1)^{k+1}} \quad \text{in $T_k$.}
\]
Then $g$ takes the same value at the points that are identified when defining
$\Sigma$.  So $g$ is a well-defined meromorphic function on $\Sigma$.

\subsection{Height differential} \label{ssec:dh}

We define
\[
	\Omega_k:=T_k\setminus\Big(\overline{D_k^+}\cup \overline{D_k^-}\Big) \quad \text{and} \quad \Omega:=\bigsqcup_{k\in\Z}\Omega_k\subset\Sigma.
\]
All circles $\partial D_k^-$ are homologous in $\Sigma$. This homology class is
denoted $\gamma$.  We denote by $\alpha_k$ and $\beta_k$ the standard
generators of the homology of $T_k$, namely the homology classes of $[0,1]$ and
$[0,\tau_k]$ modulo $\Z+\tau_k\Z$.  We choose representatives of $\alpha_k$ and
$\beta_k$ within $\Omega_k$, so they can be seen as curves on $\Sigma$.

\medskip

By Proposition \ref{prop:omega} in Section \ref{sec:omega}, for $t$ small
enough, there exists a holomorphic 1-form $\omega=\omega[t,\x]$ on
$\Sigma[t,\x]$ with imaginary periods on $\alpha_k$, $\beta_k$ for all $k\in\Z$
and $\int_{\gamma}\omega=2\pi\ii.$ We define the height differential $dh$ by
\[
	dh=t\omega.
\]
If $t=0$, $\omega$ is allowed to have simple poles at the nodes (a so-called
\emph{regular 1-form} on a Riemann surface with nodes).  So $\omega$ has simple
poles at $0_k$ and $v_k$, with residues $1$ and $-1$ respectively, and
imaginary periods on $\alpha_k$ and $\beta_k$.  By
Proposition~\ref{prop:omega}, we have explicitly
\begin{equation} \label{eq:omega0}
	\omega[0,\x]=\left(\zeta(z;\tau_k)-\zeta(z-v_k;\tau_k)-\xi(v_k;\tau_k)\right)dz\quad\text{in $T_k$}.
\end{equation}

\medskip

Finally, $\omega[t,\x]$ restricted to $\Omega$ depends smoothly on $(t,\x)$ in
a sense which we now explain.

Some care is required because the domain $\Omega$ depends on the parameters.
To formulate the smooth dependence, we pullback $\omega$ to a fixed domain as
follows.  Let $\T=\C/(\Z+\ii\Z)$ be the standard square torus. Let $\psi_k[\x]$
be the diffeomorphism defined by
\[
	\psi_k[\x]:\T\to T_k[\x],\quad \psi_k(x+\ii y)=x+\tau_k y.
\]
Let $\wt{v}_k=\psi_k^{-1}(v_k)$.
Fix a small $\varepsilon'>0$ and define
\[
	\wt{\Omega}_k = \T \setminus \big( \overline{D(0,\varepsilon')} \cup
	\overline{D(\cv{\wt{v}}_k,\varepsilon')} \big)
	\quad \text{and} \quad
	\wt{\Omega}=\bigsqcup_{k\in\Z}\wt{\Omega}_k.
\]
If $\varepsilon'$ is small enough and $\x$ is close enough to $\cv{\x}$, we
have $\Omega_k\subset\psi_k(\wt{\Omega}_k)$.  Moreover, if $t$ is small enough,
the disks $|z_k^{\pm}|<t^2/\varepsilon$ which were removed when opening nodes
are outside $\psi_k(\wt{\Omega}_k)$, so we can see $\psi_k(\wt{\Omega}_k)$ as a
domain in $\Sigma$.  We define $\psi:\wt{\Omega}\to\Sigma$ by $\psi=\psi_k$ on
$\wt{\Omega}_k$.  Then $\psi^*\omega$ is a smooth 1-form on $\wt{\Omega}$.
(Note that $\psi^*\omega$ is not holomorphic, because $\psi_k$ is not
conformal.)

\medskip

We define the pointwise norm of a (not necessarily holomorphic) 1-form $\eta$
on a domain $U$ in $\C$ or a torus $\C/\Gamma$ by $|\eta(z)|=\sup_{X\in\C^*}
\frac{|\eta(z)X|}{|X|}$. We denote $C^0(U)$ the Banach space of
1-forms $\eta=f_1 \,dx+f_2 \,dy$ with $f_1$, $f_2$ bounded continuous functions
on $U$, with the sup norm.
We can now state:
\begin{proposition} \label{prop:omega-smooth}
	The map $(t,\x)\mapsto \psi^*\omega[t,\x]$ is smooth from a neighborhood of
	$(0,\cv{\x})$ to $C^0(\wt{\Omega})$.
\end{proposition}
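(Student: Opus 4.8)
The plan is to deduce Proposition~\ref{prop:omega-smooth} from Proposition~\ref{prop:omega} (existence of $\omega[t,\x]$) by an application of the Implicit Function Theorem in a Banach space, combined with a careful analysis of how the construction of $\omega$ depends on parameters. The key point is that $\omega$ is characterized by being holomorphic on $\Sigma[t,\x]$ with prescribed (imaginary) periods on $\alpha_k,\beta_k$ and $\int_\gamma\omega=2\pi\ii$; once we express this characterization as the vanishing of a smooth map between Banach spaces, uniqueness of $\omega$ forces it to coincide with the branch produced by the IFT, which is then automatically smooth.

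First I would set up the functional-analytic framework. I would pull everything back to the fixed domain $\wt\Omega=\bigsqcup_k\wt\Omega_k$ via the diffeomorphisms $\psi_k$, so that the unknown is a 1-form $\psi^*\omega$ living in the fixed Banach space $C^0(\wt\Omega)$ (or rather an appropriate closed subspace; one may need $C^{1}$ or Hölder regularity on $\wt\Omega$ for the elliptic estimates, and then restrict to the weaker $C^0$ topology at the end). The constraint that $\omega$ be holomorphic on $\Sigma[t,\x]$ away from the necks becomes, after pullback, a first-order linear PDE with coefficients depending smoothly on $\tau_k$ (since $\psi_k$ is $\R$-linear with matrix entries smooth in $\tau_k$, $\bar\partial$ in the $z$-coordinate becomes a constant-coefficient combination of $\partial_x,\partial_y$ with coefficients rational in $\tau_k,\bar\tau_k$). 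The gluing conditions across the necks $A_k^+\sim A_{k+1}^-$ (identified by $z_k^+z_{k+1}^-=t^2$) must be encoded as matching conditions; here one follows exactly the mechanism already used in the proof of Proposition~\ref{prop:omega} in Section~\ref{sec:omega}, writing $\omega$ on each $T_k$ as an explicit meromorphic piece (a combination of $\zeta(z;\tau_k)-\zeta(z-v_k;\tau_k)$ and $dz$) plus a holomorphic correction, and translating the period and gluing conditions into a fixed-point / linear-system equation for the correction terms and the coefficients.

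The main technical obstacle is controlling the $k$-dependence uniformly, i.e.\ proving that the whole apparatus works with constants independent of $k\in\Z$, which is what allows the $\ellinf$/$C^0(\wt\Omega)$ formulation to close. This is exactly where Hypothesis~\ref{hyp:separate} (uniform separation of the $p_k$, equivalently $q_k$ bounded away from $0$) enters: it guarantees that $\varepsilon$ and $\varepsilon'$ can be chosen independent of $k$, that the local coordinates $z_k^\pm$ and the diffeomorphisms $\psi_k$ have uniformly bounded geometry, and that the explicit meromorphic model forms have uniformly bounded norms on $\wt\Omega_k$. One must check that the linear operator whose invertibility underlies Proposition~\ref{prop:omega} has inverse bounded uniformly in $(t,\x)$ near $(0,\cv\x)$, and that all the parameter-dependent ingredients — the coefficients of $\bar\partial$ in the pulled-back equation, the locations $\cv{\wt v}_k$ and $\wt v_k$, the model forms $\zeta(z;\tau_k)-\zeta(z-v_k;\tau_k)-\xi(v_k;\tau_k)$ — depend smoothly on $(t,\x)$ with all derivatives bounded uniformly in $k$. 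Since $\zeta$ and $\xi$ are smooth in $(z,\tau)$ on the relevant compact parameter ranges and the $\ellinf$ hypotheses keep $(v_k,\tau_k)$ in such ranges, this reduces to routine but careful estimates.

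Once uniform smoothness of the defining map $\mathcal F(t,\x,\cdot):C^0(\wt\Omega)\to(\text{target space})$ and uniform invertibility of its differential in the last variable at $(0,\cv\x)$ are established, the Implicit Function Theorem produces a smooth map $(t,\x)\mapsto\omega^{\mathrm{IFT}}[t,\x]\in C^0(\wt\Omega)$ solving $\mathcal F=0$. By the uniqueness statement already contained in Proposition~\ref{prop:omega}, $\omega^{\mathrm{IFT}}[t,\x]=\psi^*\omega[t,\x]$, so $(t,\x)\mapsto\psi^*\omega[t,\x]$ is smooth into $C^0(\wt\Omega)$, which is the claim. I would finish by remarking that the explicit value~\eqref{eq:omega0} of $\omega[0,\cv\x]$ is recovered as the $t=0$ value of this branch, consistent with the model-form ansatz used above, so no separate verification is needed there.
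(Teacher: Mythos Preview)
Your overall strategy---characterize $\omega$ by a smooth equation in a Banach space, invoke uniqueness, and conclude smoothness via IFT---is sound and is in spirit what the paper does. But the paper's implementation is more concrete than either of the two routes you sketch, and avoids the $\bar\partial$ formalism entirely. Rather than treating $\psi^*\omega$ itself as the unknown in a function space and imposing holomorphicity as a PDE constraint, the paper parametrizes the space of candidate holomorphic 1-forms on each $T_k$ explicitly by a sequence of complex coefficients $\bm\lambda=(\lambda_{k,n}^\pm)\in\ell^\infty(\Lambda)$ via the series ansatz~\eqref{eq-omega-series} built from the standard abelian differentials $\omega_{k,0_k,v_k}$ and $\omega_{k,n}^\pm$. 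Holomorphicity and the period conditions are then automatic; the only remaining constraint is compatibility across the necks, which (Lemma~\ref{omega-lemma3}) becomes the fixed-point equation $\bm\lambda=\bm L(t,\x,\bm\lambda)$ in $\ell^\infty$. Smoothness follows by showing that $\bm L$ factors as $\wt{\bm L}\circ\psi^*\wt\omega$ with both factors smooth (Lemma~\ref{omega-lemma6}, proved via the uniformly-bounded-derivatives criterion of Proposition~\ref{prop:smooth1} and the explicit estimates of Claim~\ref{claim-omega}), so that the fixed point $\bm\lambda(t,\x)$, and hence $\psi^*\omega=(\psi^*\wt\omega)(\x,\bm\lambda(t,\x))$, is smooth.

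This buys two things over your $\bar\partial$ route: no elliptic estimates or H\"older spaces are needed (everything reduces to sup-norm bounds on explicit elliptic-function formulas, which is where the uniform separation hypothesis enters), and the unknown lives in the simple sequence space $\ell^\infty(\Lambda)$ rather than in a subspace of $C^0(\wt\Omega)$ cut out by a differential condition. Your parenthetical about needing $C^1$ or H\"older regularity is exactly the complication the paper's ansatz sidesteps. Your second suggestion---``follow the mechanism in Section~\ref{sec:omega}''---is the right one, and once you commit to it the argument is essentially the paper's, with your IFT phrased equivalently as smooth dependence of the fixed point of the smooth contraction $\bm L$.
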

This is the content of Proposition \ref{prop:omega}(c).

\begin{remark}
	The point here is that $\wt{\Omega}$ is a fixed domain, independent of the
	parameters.  The domain $\psi(\wt{\Omega})\subset\Sigma$ depends on
	$(\tau_k)$ but not on $t$ nor the other parameters.  It contains the domain
	$\Omega$ which fully depends on $\x$.  Since $\tau_k$ is in a neighborhood of
	$\cv{\tau}_k$, we have $c^{-1}\leq |\psi_k^*dz|\leq c$ for some uniform
	constant $c$.  Hence for any holomorphic 1-form $\eta$ on $\Sigma$,
	\[
		c^{-1}\|\psi_k^*\eta\|_{C^0(\wt{\Omega})}\leq \|\eta\|_{C^0(\psi(\wt{\Omega}))}
		\leq c\|\psi_k^* \eta\|_{C^0(\wt{\Omega})}.
	\]
\end{remark}

\subsection{Zeros of the height differential} \label{ssec:zeros}

We use the Weierstrass parametrization
\[
	\Sigma[t,\x] \ni z \mapsto \Re \int_{z_0}^z (\Phi_1, \Phi_2, \Phi_3) \in
	\R^3,
\]
where
\[
	(\Phi_1,\Phi_2,\Phi_3):=\Big(\frac{1}{2}(g^{-1}-g), \frac{\ii}{2}(g^{-1}+g), 1 \Big) dh
\]
are holomorphic differentials, so that the Weierstrass parametrization is an
immersion.  So we need to solve the Regularity Problem, which asks that $dh$
has a zero at each zero or pole of $g$, with the same multiplicity, and no
other zeros.

The elliptic function $g_k[\x]$ has degree 2 so it has two zeros in $T_k$ which
we denote $Z_{k,1}[\x]$ and $Z_{k,2}[\x]$. Since $g_k[\x]$ has poles at $0_k$
and $v_k$, its zeros are in $\Omega_k$ provided that $\varepsilon$ is small
enough.  Note that we cannot rule out the possibility of a double zero
$Z_{k,1}=Z_{k,2}$ at $\cv{\x}$, in which case $Z_{k,1}$ and $Z_{k,2}$ are not
smooth functions of $\x$. Nevertheless, by Weierstrass Preparation Theorem, in
a neighborhood of $\cv{\x}$, $Z_{k,1}+Z_{k,2}$ and $Z_{k,1}Z_{k,2}$ are smooth
functions of $\x$.  The gauss map $g$, by definition, has zeros (resp. poles)
at $Z_{k,1}$ and $Z_{k,2}$ for $k$ odd (resp. $k$ even).

\begin{proposition}
	For $(t,\x)$ close to $(0,\cv{\x})$, $\omega$ has two zeros in $\Omega_k$ for
	$k\in\Z$ (counting multiplicity) and no other zeros. The Regularity Problem
	is equivalent to
	\begin{equation} \label{eq:regularity1}
		\frac{\omega}{dz}(Z_{k,1})+\frac{\omega}{dz}(Z_{k,2})=0
	\end{equation}
	and
	\begin{equation} \label{eq:regularity2}
		\int_{\partial\Omega_k}g_k^{-1}\omega=0
	\end{equation}
	for $k\in\Z$.
\end{proposition}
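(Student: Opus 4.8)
The plan is to combine an argument counting zeros via the argument principle with an analysis of the three pieces of $\Sigma$: the torus parts $\Omega_k$ and the necks $A_k^\pm$. First I would establish the claim about the location of the zeros of $\omega$. On each $\Omega_k$, the 1-form $\omega[t,\x]$ is a small perturbation (in the $C^0$-sense provided by Proposition~\ref{prop:omega-smooth}) of $\omega[0,\x]$, whose explicit formula \eqref{eq:omega0} shows it has exactly two zeros in $\Omega_k$ for $t=0$, since $\zeta(z;\tau_k)-\zeta(z-v_k;\tau_k)-\xi(v_k;\tau_k)$ is an elliptic function of degree $2$ with its two poles at $0_k$ and $v_k$ removed. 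An argument-principle / Rouch\'e comparison — applied uniformly in $k$ using the uniform bounds from the Remark after Proposition~\ref{prop:omega-smooth} and Hypothesis~\ref{hyp:separate} — then gives exactly two zeros of $\omega$ in each $\Omega_k$ for $(t,\x)$ near $(0,\cv\x)$. On the necks $A_k^\pm$ one checks that $\omega$ has no zeros: in the neck coordinate, $\omega$ looks like $\frac{dz_k^+}{z_k^+}$ up to a factor close to $1$ (this is essentially the normalization $\int_\gamma\omega = 2\pi\ii$ together with the removal of the pole when $t\neq 0$), so it is nonvanishing there. Summing over $k$ accounts for all zeros of $\omega$ on $\Sigma$.

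Next I would translate the Regularity Problem into the two stated equations. The zeros and poles of the Gauss map $g$ on $\Sigma$ are, by construction: the points $0_k, v_k$ (the neck points, where $g$ has the values forced by the identifications) contribute nothing new, while the genuine zeros/poles of $g$ are the $Z_{k,1}, Z_{k,2}$, which lie in $\Omega_k$ for $\varepsilon$ small. So $dh=t\omega$ must vanish exactly at each $Z_{k,j}$ with multiplicity one and nowhere else. Since we have just shown $\omega$ has precisely two zeros in $\Omega_k$ and none in the necks, ``no other zeros'' together with ``a simple zero at $Z_{k,1}$ and at $Z_{k,2}$'' is equivalent to: the two zeros of $\omega$ in $\Omega_k$ are exactly $Z_{k,1}$ and $Z_{k,2}$. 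When $Z_{k,1}\neq Z_{k,2}$ this says $\frac{\omega}{dz}(Z_{k,1})=\frac{\omega}{dz}(Z_{k,2})=0$; more robustly (and handling the possible double zero), it says the meromorphic function $g_k^{-1}\,\frac{\omega}{dz}$ — which a priori has at worst simple poles at $0_k$ and $v_k$ on $T_k$, and whose only possible further poles in $\Omega_k$ are cancelled precisely when $\omega$ vanishes at the zeros of $g_k$ — is in fact holomorphic on $\Omega_k$.

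I would then package these two conditions symmetrically. Equation~\eqref{eq:regularity1} is the ``sum of values'' form: writing $h=\omega/dz$, the elliptic-function identity for the two zeros of $g_k$ shows that $h(Z_{k,1})+h(Z_{k,2})=0$ together with the residue computation at $0_k$ and $v_k$ forces $g_k^{-1}h$ to have vanishing residues, hence to extend meromorphically with controlled poles; conversely a direct computation expresses $h(Z_{k,1})+h(Z_{k,2})$ in terms of the residue data. Equation~\eqref{eq:regularity2} is the ``period'' form of the remaining condition: by the residue theorem on $T_k$,
\[
	\int_{\partial\Omega_k} g_k^{-1}\omega
	= 2\pi\ii\big(\res_{0_k} g_k^{-1}\omega + \res_{v_k} g_k^{-1}\omega\big)
	- 2\pi\ii\sum_{g_k(Z)=0}\res_Z g_k^{-1}\omega,
\]
and since $\res_Z g_k^{-1}\omega = h(Z)/g_k'(Z)$ at a simple zero $Z$ of $g_k$, the vanishing of this integral is exactly the statement that the zeros of $\omega$ in $\Omega_k$ coincide with those of $g_k$ (the boundary terms at $0_k,v_k$ being computable from the known residues of $\omega$ and $g_k$ and seen to cancel, or to match~\eqref{eq:regularity1}). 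Using the contour integral rather than pointwise values has the advantage of being a smooth function of $\x$ even across the locus $Z_{k,1}=Z_{k,2}$, via the Weierstrass Preparation Theorem remark already made in the text. The main obstacle I anticipate is the bookkeeping at the neck points $0_k$ and $v_k$: one must verify carefully that the contributions of the potential poles of $g_k^{-1}\omega$ there are either harmless or exactly accounted for, so that \eqref{eq:regularity1} and \eqref{eq:regularity2} together are genuinely equivalent to the full Regularity Problem and not merely necessary conditions — and that this equivalence is uniform in $k$, which is where Hypothesis~\ref{hyp:separate} and the uniform estimates on $\omega$ re-enter.
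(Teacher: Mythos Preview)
Your treatment of the zero count is essentially the paper's: argument principle on $\Omega_k$ against the explicit $t=0$ elliptic function, plus nonvanishing on the necks (the paper cites \cite{traizet2013} for this last step, but your description of $\omega$ as $dz_k^+/z_k^+$ times a factor close to $1$ is exactly the content of that argument).

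The equivalence argument, however, has a real gap. Your residue formula for $\int_{\partial\Omega_k} g_k^{-1}\omega$ introduces terms at $0_k$ and $v_k$ that are simply not there: $g_k$ has simple \emph{poles} at $0_k$ and $v_k$, so $g_k^{-1}$ vanishes and $g_k^{-1}\omega$ is regular near those points. The only poles of $g_k^{-1}\omega$ in $\Omega_k$ are the zeros $Z_{k,1},Z_{k,2}$ of $g_k$, and the residue theorem gives directly
\[
\int_{\partial\Omega_k} g_k^{-1}\omega = 2\pi\ii\left(\frac{h(Z_{k,1})}{g_k'(Z_{k,1})}+\frac{h(Z_{k,2})}{g_k'(Z_{k,2})}\right),\qquad h:=\frac{\omega}{dz}.
\]
So the ``bookkeeping at the neck points'' you flag as the main obstacle is a non-issue. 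What you are actually missing is the step that makes \eqref{eq:regularity1} and \eqref{eq:regularity2} \emph{independent}: applying the residue theorem to the $1$-form $dz/g_k$ on the full torus $T_k$ (it has only the two simple poles $Z_{k,1},Z_{k,2}$) gives
\[
\frac{1}{g_k'(Z_{k,1})}+\frac{1}{g_k'(Z_{k,2})}=0,
\]
whence $\int_{\partial\Omega_k} g_k^{-1}\omega=\frac{2\pi\ii}{g_k'(Z_{k,1})}\big(h(Z_{k,1})-h(Z_{k,2})\big)$. Now \eqref{eq:regularity1} and \eqref{eq:regularity2} are visibly the sum and the difference of $h(Z_{k,1})$ and $h(Z_{k,2})$, hence together equivalent to $h(Z_{k,1})=h(Z_{k,2})=0$. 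Without this identity your two linear conditions have no established independence, and your assertion that \eqref{eq:regularity2} alone ``is exactly the statement that the zeros of $\omega$ coincide with those of $g_k$'' overshoots: one scalar equation cannot pin down two zeros.
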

\begin{proof}
	By \eqref{eq:omega0}, $\omega[0,\x]/dz$ is an elliptic function of degree $2$
	in $T_k$, with simple poles at $0_k$ and $v_k$.  So it has two zeros in
	$\Omega_k$ as long as $\varepsilon$ is small enough.  Hence
	$\psi^*\omega[0,\x]$ has two zeros in $\wt{\Omega}_k$, counting multiplicity.
	By Proposition \ref{prop:omega-smooth} and the Argument Principle, for
	$t$ small enough, $\psi^*\omega[t,\x]$ has two zeros in $\wt{\Omega}_k$, so
	$\omega[t,\x]$ has two zeros in $\psi(\wt{\Omega}_k)$.  By the same proof of
	Corollary 2 in \cite{traizet2013}, $\omega$ has no zero in the annuli
	$A_k^{\pm}$, so it has two zeros in $\Omega_k$ for each $k\in\Z$, and no
	further zero.

	If $Z_{k,1}\neq Z_{k,2}$, then $dz/g_k$ is a meromorphic 1-form on $T_k$ with
	two simple poles at $Z_{k,1}$ and $Z_{k,2}$.  Therefore, by the Residue
	Theorem,
	\[
		\frac{1}{g'_k(Z_{k,1})}+\frac{1}{g'_k(Z_{k,2})}=0,
	\]
	\begin{align*}
		\int_{\partial\Omega_k}g_k^{-1}\omega &= 2\pi\ii\left(\frac{\omega}{g'_k\,dz}(Z_{k,1})+\frac{\omega}{g'_k\,dz}(Z_{k,2})\right)\\
		&=\frac{2\pi\ii}{g'_k(Z_{k,1})}\left(\frac{\omega}{dz}(Z_{k,1})-\frac{\omega}{dz}(Z_{k,2})\right).
	\end{align*}
	Hence \eqref{eq:regularity1} and \eqref{eq:regularity2} are equivalent to
	$\frac{\omega}{dz}(Z_{k,1})=\frac{\omega}{dz}(Z_{k,2})=0$.

	If $Z_{k,1}=Z_{k,2}$, $g_k$ has a double zero at $Z_{k,1}$.  It is then easy
	to see, using the Residue Theorem, that \eqref{eq:regularity1} and
	\eqref{eq:regularity2} are equivalent to $\omega$ having a double zero at
	$Z_{k,1}$.
\end{proof}

\begin{proposition} \label{prop:regularity1}
	For $t$ in a neighborhood of $0$, there exists a unique value of
	$(b_k)\in\ellinf$, depending smoothly on $t$ and the other parameters, such
	that \eqref{eq:regularity1} is solved for all $k\in\Z$.  Moreover, at $t=0$,
	\[
		b_k=-a_k\xi(v_k;\tau_k)
	\]
	so
	\begin{equation} \label{eq:omega0-2}
		g_k\,dz=a_k\omega\quad\text{in $T_k$}.
	\end{equation}
\end{proposition}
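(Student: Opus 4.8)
The plan is to solve \eqref{eq:regularity1} by the Implicit Function Theorem in $\ellinf$, viewing it as an equation for $(b_k)$ with the other parameters frozen. First I would make \eqref{eq:regularity1} explicit as a function of the parameters. Recall $g_k(z)=a_k(\zeta(z;\tau_k)-\zeta(z-v_k;\tau_k))+b_k$, so $g_k\,dz$ and $a_k\omega[0,\x]$ differ: by \eqref{eq:omega0}, $a_k\omega[0,\x]/dz = a_k(\zeta(z;\tau_k)-\zeta(z-v_k;\tau_k)-\xi(v_k;\tau_k))$, hence $g_k\,dz - a_k\omega[0,\x] = (b_k + a_k\xi(v_k;\tau_k))\,dz$. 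This already pins down the claimed central value: at $t=0$, if $b_k=-a_k\xi(v_k;\tau_k)$ then $g_k\,dz=a_k\omega[0,\x]$, which is \eqref{eq:omega0-2}, and in particular $Z_{k,1},Z_{k,2}$ are exactly the two zeros of $\omega[0,\x]$ in $\Omega_k$, so the left-hand side of \eqref{eq:regularity1} reads $\frac{\omega}{dz}(Z_{k,1})+\frac{\omega}{dz}(Z_{k,2})=0$ trivially at $t=0$. (One should note that $b_k+a_k\xi(v_k;\tau_k)$ vanishing at the stated central values is consistent with \eqref{eq:central-value}, since $\cv a_k=-\tfrac12$ and $\cv b_k=\tfrac12\xi(\cv v_k;\cv\tau_k)$.)

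Next I would set up the fixed-point / IFT argument. Define $\Phi_k(t,\x)=\frac{\omega}{dz}(Z_{k,1})+\frac{\omega}{dz}(Z_{k,2})$; by the Weierstrass Preparation Theorem remark in Section~\ref{ssec:zeros}, $Z_{k,1}+Z_{k,2}$ and $Z_{k,1}Z_{k,2}$ depend smoothly on $\x$ even when the zeros collide, and since $\omega/dz$ restricted to $\Omega_k$ is a symmetric-enough object, $\Phi_k$ is a smooth function of $(t,\x)$; by Proposition~\ref{prop:omega-smooth} and the uniformity in $k$ coming from Hypothesis~\ref{hyp:separate}, the map $\Phi=(\Phi_k)_{k\in\Z}$ is smooth from a neighborhood of $(0,\cv\x)$ into $\ellinf$. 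The key computation is the partial derivative $\partial\Phi_k/\partial b_k$ at the central value. Changing $b_k$ by $\delta$ changes $g_k\,dz$ by $\delta\,dz$ but — crucially — does not change $\omega$, only the zeros $Z_{k,i}$ of $g_k$. Differentiating $g_k(Z_{k,i})=0$ gives $\dot Z_{k,i} = -\delta/g_k'(Z_{k,i})$, so
\[
	\frac{\partial}{\partial b_k}\Big(\tfrac{\omega}{dz}(Z_{k,1})+\tfrac{\omega}{dz}(Z_{k,2})\Big)
	= -\Big(\frac{(\omega/dz)'(Z_{k,1})}{g_k'(Z_{k,1})}+\frac{(\omega/dz)'(Z_{k,2})}{g_k'(Z_{k,2})}\Big).
\]
At the central value $g_k\,dz=a_k\omega$, so $Z_{k,i}$ is a simple zero of $\omega/dz$ (as $\omega[0,\x]/dz$ has degree $2$ with distinct zeros for generic configurations; the degenerate double-zero case is handled by the Weierstrass-preparation substitution, working with the elementary symmetric functions instead), and $g_k'(Z_{k,i})=a_k(\omega/dz)'(Z_{k,i})$, so each summand equals $1/a_k=-2$. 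Hence $\partial\Phi_k/\partial b_k = 2/a_k = -4\neq 0$ at $(0,\cv\x)$, while $\partial\Phi_k/\partial b_\ell=0$ for $\ell\neq k$ because $b_\ell$ affects neither $\omega$ nor $Z_{k,i}$. Thus the differential of $\Phi$ with respect to $(b_k)$ at $(0,\cv\x)$ is $2\,\mathrm{diag}(1/a_k)$, a bounded invertible operator on $\ellinf$ with bounded inverse.

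Finally I would invoke the Implicit Function Theorem in Banach spaces: since $\Phi(0,\cv\x)=0$ and $D_{\bm b}\Phi(0,\cv\x)$ is an isomorphism of $\ellinf$, there is a neighborhood of $(0,\cv\x')$ in the remaining parameters on which a unique $\bm b=\bm b(t,\text{rest})\in\ellinf$ solves $\Phi=0$, depending smoothly on all parameters, with $b_k=-a_k\xi(v_k;\tau_k)$ at $t=0$ forced by uniqueness (since that value solves the equation there). Equation~\eqref{eq:omega0-2} is then immediate from the identity $g_k\,dz-a_k\omega[0,\x]=(b_k+a_k\xi(v_k;\tau_k))\,dz$ derived above. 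The main obstacle is the bookkeeping around the possibly-double zero $Z_{k,1}=Z_{k,2}$: there $\omega/dz$ and $g_k'$ both degenerate, so the derivative formula must be re-derived via Weierstrass preparation, writing \eqref{eq:regularity1} in terms of the smooth quantities $Z_{k,1}+Z_{k,2}$, $Z_{k,1}Z_{k,2}$ and verifying the same non-degeneracy $2/a_k$ of the $b_k$-derivative survives the limit; everything else is routine IFT in $\ellinf$ with the uniform-in-$k$ bounds supplied by Hypothesis~\ref{hyp:separate} and Proposition~\ref{prop:omega-smooth}.
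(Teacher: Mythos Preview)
Your approach is essentially the same as the paper's: define the map $\Phi_k=\mathcal{E}_k=\frac{\omega}{dz}(Z_{k,1})+\frac{\omega}{dz}(Z_{k,2})$, verify it vanishes at $(0,\cv\x)$, compute that the $b_k$-partial is invertible, and apply the Implicit Function Theorem in $\ellinf$. The smoothness argument (contour integral with the symmetric functions $Z_{k,1}+Z_{k,2}$, $Z_{k,1}Z_{k,2}$) is also the same as the paper's.

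The one genuine difference is how you compute the $b_k$-derivative. You differentiate through the moving zeros $Z_{k,i}(b_k)$, use $g_k'(Z_{k,i})=a_k(\omega/dz)'(Z_{k,i})$ at the central value, and arrive at $\pm 2/a_k$. This works when the $Z_{k,i}$ are simple, and you correctly flag the double-zero case as an obstacle to be patched by Weierstrass preparation. The paper avoids this detour entirely by a direct evaluation: at $t=0$ one has $\frac{\omega}{dz}=\frac{1}{a_k}(g_k(z)-b_k)-\xi(v_k;\tau_k)$, so plugging in the zeros of $g_k$ gives the closed form
\[
\mathcal{E}_k(0,\x)=-2\Big(\frac{b_k}{a_k}+\xi(v_k;\tau_k)\Big)
\]
for \emph{all} $\x$, not just at the central value. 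This immediately yields $\partial\mathcal{E}_k/\partial b_k=-2/a_k$, handles simple and double zeros uniformly, and makes the uniqueness of $b_k=-a_k\xi(v_k;\tau_k)$ at $t=0$ transparent. Your route reaches the same conclusion but with an extra case left dangling; the paper's computation is the cleaner one to adopt. (Minor slip: by your own formula $\partial\Phi_k/\partial b_k=-\big(\tfrac{1}{a_k}+\tfrac{1}{a_k}\big)=-2/a_k=4$, not $2/a_k=-4$; inconsequential for invertibility.)
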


\begin{proof}
	Define
	\[
		\cal{E}_k(t,\x)=\frac{\omega}{dz}(Z_{k,1})+\frac{\omega}{dz}(Z_{k,2}).
	\]
	Then by the Residue Theorem,
	\[
		\cal{E}_k(t,\x)=\frac{1}{2\pi\ii}\int_{\partial\Omega_k}\frac{\omega}{z-Z_{k,1}}+\frac{\omega}{z-Z_{k,2}}=\frac{1}{2\pi\ii}\int_{\partial\Omega_k}\frac{(2z-(Z_{k,1}+Z_{k,2}))\omega}{z^2-(Z_{k,1}+Z_{k,2})z+Z_{k,1}Z_{k,2}}.
	\]
	Using Proposition \ref{prop:omega-smooth}, $(\cal{E}_k)_{k\in\Z}$ is a smooth
	map with value in $\ellinf$. At $t=0$, we have by~\eqref{eq:omega0}
	\[
		\frac{\omega}{dz}=\frac{1}{a_k}(g_k(z)-b_k)-\xi(v_k;\tau_k)
	\]
	Since $g_k(Z_{k,i})=0$,
	\[
		\cal{E}_k(0,\x)=-2\left(\frac{b_k}{a_k}+\xi(v_k;\tau_k)\right).
	\]
	Since $\cv{a}_k=-1/2$, the partial differential of $(\cal{E}_k)$ with respect
	to $(b_k)$ is an automorphism of $\ellinf$. Proposition
	\ref{prop:regularity1} then follows from the Implicit Function Theorem.
\end{proof}

If $b_k$ is given by Proposition \ref{prop:regularity1}, then at $t=0$,
$\omega$ and $g_k$ are proportional in $T_k$ hence have the same zeros.  Then
\eqref{eq:regularity2} is satisfied at $t=0$ disregard of the value of the
other parameters.  So we cannot easily solve~\eqref{eq:regularity2} for $t\neq
0$ using the Implicit Function Theorem. We will solve~\eqref{eq:regularity2} in
Section \ref{ssec:balancing}.

\subsection{The Period Problem} \label{ssec:period-problem}

From now on, we assume that $(b_k)$ is given by
Proposition~\ref{prop:regularity1} and $\x=(a_k,v_k,\tau_k)_{k\in\Z}$ denotes
the remaining parameters.  The height differential has imaginary periods by
definition.  It remains to solve the following Period Problems for all
$k\in\Z$:
\begin{alignat}{2}
	\Re\int_{\alpha_k}\Phi_1&=(-1)^k,&\quad
	\Re\int_{\alpha_k}\Phi_2&=0, \label{period-problem-alpha}\\
	\Re\int_{\beta_k}\Phi_1&=\Re \tau,&
	\Re\int_{\beta_k}\Phi_2&=\Im \tau, \label{period-problem-beta}\\
	\Re\int_{\gamma}\Phi_1&=0,&
	\Re\int_{\gamma}\Phi_2&=0. \label{period-problem-gamma}
\end{alignat}

\begin{proposition} \label{prop:period-problem1}
	For $t$ small enough, there exists unique values for the parameters $(a_k)$
	and $(\tau_k)$ in $\ellinf$, depending smoothly on $t$ and $(v_k)$, such that
	\eqref{period-problem-alpha} and \eqref{period-problem-beta} are satisfied
	for all $k\in\Z$.  Moreover, at $t=0$, $a_k=-1/2$ and
	$\tau_k=(-\conj)^k(\tau)$, disregard of the value of $(v_k)$.
\end{proposition}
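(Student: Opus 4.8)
The plan is to apply the Implicit Function Theorem to the period functionals in \eqref{period-problem-alpha} and \eqref{period-problem-beta}, viewed as a map from the parameters $(a_k)$ and $(\tau_k)$ (with $t$ and $(v_k)$ frozen) into $\ellinf \oplus \ellinf$. First I would compute each period integral at $t=0$ using the explicit formula \eqref{eq:omega0} for $\omega[0,\x]$ together with the identity $\Phi_3 = dh = t\omega$, which makes $\Phi_3$ vanish at $t=0$, and $\Phi_1 = \frac12(g^{-1}-g)\,dh$, $\Phi_2 = \frac{\ii}{2}(g^{-1}+g)\,dh$. Because $g = (tg_k)^{(-1)^{k+1}}$, the product $g^{\pm1}\,dh$ has a finite limit as $t\to0$: on $T_k$ one of $g\,dh$ or $g^{-1}\,dh$ is $t^2 g_k\,\omega$ (which tends to $0$) and the other is $g_k^{-1}\,\omega$, which by \eqref{eq:omega0-2} equals $a_k^{-1}\,dz$ at $t=0$. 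So at $t=0$, on $T_k$, the leading contribution to $\Phi_1$ and $\Phi_2$ is a constant multiple of $g_k^{-1}\omega = a_k^{-1}dz$, and the $\alpha_k$- and $\beta_k$-periods of $dz$ on $T_k$ are exactly $1$ and $\tau_k$ respectively. This is where the central values come from.

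Concretely, I would show that at $t=0$ the four real period conditions \eqref{period-problem-alpha}--\eqref{period-problem-beta} on $T_k$ reduce, up to the sign $(-1)^k$ coming from the exponent $(-1)^{k+1}$ in the Gauss map, to
\[
	\Re\!\Big(\tfrac{1}{2a_k}\cdot 1\Big) = \tfrac12,\qquad
	\Im\!\Big(\tfrac{1}{2a_k}\cdot 1\Big)=0,\qquad
	\Re\!\Big(\tfrac{1}{2a_k}\tau_k\Big)=\Re\tau,\qquad
	\Im\!\Big(\tfrac{1}{2a_k}\tau_k\Big)=\Im\tau,
\]
after applying the appropriate conjugation bookkeeping for even versus odd $k$; here the conjugation operator $\conj$ enters because $\overline{dz}$ appears when the exponent is $-1$. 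The first two equations force $a_k = -\tfrac12 = \cv a_k$ (the sign being fixed so that $\Re\int_{\alpha_k}\Phi_1 = (-1)^k$ with the right orientation), and then the last two force $\tau_k = (-\conj)^k\tau = \cv\tau_k$. Thus the central values $\cv\x$ solve the system at $t=0$ for every choice of $(v_k)$, as claimed.

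For the IFT step I would fix $t$ and $(v_k)$, regard the left-hand sides of \eqref{period-problem-alpha}--\eqref{period-problem-beta} as four real sequences depending smoothly (via Proposition~\ref{prop:omega-smooth} and the smooth dependence of the zeros in Section~\ref{ssec:zeros}) on $(a_k,\tau_k)\in\ellinf\oplus\ellinf$, and compute the partial differential with respect to $(a_k,\tau_k)$ at $(0,\cv\x)$. Since the period of $dz$ on $T_k$ depends only on the parameters of $T_k$ itself, this differential is \emph{diagonal} in $k$: its $k$-th block is the differential of the map $(a_k,\tau_k)\mapsto$ (the four numbers above), which is explicitly $(a_k,\tau_k)\mapsto \big(\Re(c/a_k),\Im(c/a_k),\Re(c\tau_k/a_k),\Im(c\tau_k/a_k)\big)$ for a nonzero constant $c$; at $a_k=-\tfrac12$, $\tau_k=\cv\tau_k$ with $\Im\cv\tau_k\neq 0$ this $4\times4$ real block is invertible, with inverse bounded uniformly in $k$. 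Hence the full differential is an automorphism of $\ellinf\oplus\ellinf$, and the Implicit Function Theorem in Banach spaces yields unique $(a_k)$ and $(\tau_k)$ in $\ellinf$, depending smoothly on $t$ and $(v_k)$, solving \eqref{period-problem-alpha}--\eqref{period-problem-beta}, with the stated values at $t=0$.

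The main obstacle is the careful accounting of the limits $g^{\pm1}\,dh$ as $t\to0$ and of the conjugations, i.e.\ verifying that the $t=0$ limit of the period map is exactly the block-diagonal $dz$-period map above and that the error terms are $O(t)$ in $\ellinf$ uniformly in $k$; this rests on the $t^2$-scaling in the node-opening identification $z_k^+z_{k+1}^- = t^2$ (so $g\,dh$ on one torus and $g^{-1}\,dh$ on the adjacent torus carry compensating powers of $t$), on \eqref{eq:omega0-2} to identify the surviving term, and on the uniform estimates furnished by Proposition~\ref{prop:omega-smooth}. Once this limit is pinned down the invertibility of the differential and the uniform boundedness of its inverse are routine, and the conclusion follows.
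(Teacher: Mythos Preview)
Your proposal is correct and follows essentially the same route as the paper, which merely repackages the four real period conditions into the complex quantities $\mathcal P_{k,1}=\overline{\int_{\alpha_k}g^{-1}\,dh}-\int_{\alpha_k}g\,dh$ and its $\beta_k$-analogue $\mathcal P_{k,2}$, evaluates them at $t=0$ via \eqref{eq:omega0-2} to obtain $-1/a_k$, $-\tau_k/a_k$ (or their conjugates for odd $k$), and applies the Implicit Function Theorem exactly as you outline. One small correction: no $\overline{dz}$ actually appears in the computation---the $(-\conj)^k$ in $\cv\tau_k$ arises purely from the parity-dependent sign in $g=(tg_k)^{(-1)^{k+1}}$, which flips which of $g\,dh$ and $g^{-1}\,dh$ survives in the limit, not from any antiholomorphic term.
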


\begin{proof}
	we define
	\begin{align*}
		\cal{P}_{k,1}(t,\x)&=\conj\Big(\int_{\alpha_k} g^{-1}\,dh\Big) - \int_{\alpha_k} g\,dh\\
		\cal{P}_{k,2}(t,\x)&=\conj\Big(\int_{\beta_k} g^{-1}\,dh\Big) - \int_{\beta_k} g\,dh
	\end{align*}
	Equations \eqref{period-problem-alpha} and \eqref{period-problem-beta} are
	equivalent to
	\begin{equation} \label{period-problem2}
		\left\{\begin{array}{l}
				\cal{P}_{k,1}(t,\x)=2(-1)^k\\
		\cal{P}_{k,2}(t,\x)=2\tau.\end{array}\right.
	\end{equation}

	We have in $T_k$:
	\[
		g\,dh=\left\{\begin{array}{ll}
				g_k^{-1}\omega&\text{ if $k$ even}\\
				t^2 g_k\omega&\text{ if $k$ odd}
		\end{array}\right.
		\quad\text{and}\quad
		g^{-1}dh=\left\{\begin{array}{ll}
				t^2 g_k\omega&\text{ if $k$ even}\\
				g_k^{-1}\omega&\text{ if $k$ odd}
		\end{array}\right.
	\]
	We can take $\alpha_k=\psi_k(\wt{\alpha}_k)$ and
	$\beta_k=\psi_k(\wt{\beta}_k)$ where $\wt{\alpha}_k$ and $\wt{\beta}_k$ are
	fixed curves in $\wt{\Omega}_k$.  By Proposition~\ref{prop:omega-smooth},
	$(\cal{P}_{k,1})_{k\in\Z}$ and $(\cal{P}_{k,2})_{k\in\Z}$ are smooth maps
	with value in $\ellinf$.  At $t=0$, we have by~\eqref{eq:omega0-2} that: For
	$k$ even,
	\[
		\cal{P}_{k,1}(0,\x)=-\int_{\alpha_k}a_k^{-1}dz=\frac{-1}{a_k}
		\quad\text{and}\quad
		\cal{P}_{k,2}(0,\x)=-\int_{\beta_k}a_k^{-1}dz=\frac{-\tau_k}{a_k}.
	\]
	The solution to~\eqref{period-problem2} is then $a_k=-1/2$ and $\tau_k=\tau$.
	For $k$ odd,
	\[
		\cal{P}_{k,1}(0,\x)=\conj\int_{\alpha_k}a_k^{-1}dz=\conj\frac{1}{a_k}
		\quad\text{and}\quad
		\cal{P}_{k,2}(0,\x)=\conj\int_{\beta_k}a_k^{-1}dz=\conj\frac{\tau_k}{a_k}.
	\]
	The solution to~\eqref{period-problem2} is then $a_k=-1/2$ and
	$\tau_k=-\conj{\tau}$.  The partial differential of
	$((\cal{P}_{k,1}),(\cal{P}_{k,2}))$ with respect to $((a_k),(\tau_k))$ is
	clearly an automorphism of $\ellinf\times\ellinf$.
	Proposition~\ref{prop:period-problem1} then follows from the Implicit
	Function Theorem.
\end{proof}

\subsection{Balancing} \label{ssec:balancing}

From now on, we assume that the parameters $(a_k)$ and $(\tau_k)$ are given by
Proposition \ref{prop:period-problem1}.  So the only remaining parameters are
$t$ and $\bm v=(v_k)$.  It remains to solve \eqref{eq:regularity2} and
\eqref{period-problem-gamma}.  We define
\[
	\cal{G}_k(t,\bm v)=\conj^k\int_{\partial D_k^-}g_k\omega.
\]

\begin{proposition} \label{prop:balancing1}
	For $t\neq 0$, \eqref{eq:regularity2} and \eqref{period-problem-gamma} are
	equivalent to $\cal{G}_k(t,\bm v)=\cal{G}_0(t,\bm v)$ for all $k\in\Z$.
\end{proposition}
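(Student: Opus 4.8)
The plan is to show that the two target equations \eqref{eq:regularity2} and \eqref{period-problem-gamma}, taken together over all $k$, are equivalent to the sequence $(\cal{G}_k(t,\bm v))_{k\in\Z}$ being constant. The main tool is the fact that all circles $\partial D_k^-$ are homologous in $\Sigma$ to the class $\gamma$, so that periods of a fixed holomorphic 1-form over these curves agree, and more importantly the residue-type identities that relate the integral of $g_k^{-1}\omega$ over $\partial\Omega_k$ (which involves the two boundary circles $\partial D_k^+$ and $\partial D_k^-$) to integrals of $g\,dh$ and $g^{-1}\,dh$ over $\gamma$.

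First I would unwind $\partial\Omega_k = \partial D_k^- \cup \partial D_k^+$ (with appropriate orientations) and rewrite \eqref{eq:regularity2} as $\int_{\partial D_k^-} g_k^{-1}\omega = \int_{\partial D_k^+} g_k^{-1}\omega$ up to sign. Next, using the neck identification $z_k^+ z_{k+1}^- = t^2$ together with the definition $g(z) = (tg_k(z))^{(-1)^{k+1}}$ in $T_k$, I would match the integral over $\partial D_k^+$ (a circle in $T_k$ around $v_k$) with an integral over $\partial D_{k+1}^-$ (a circle in $T_{k+1}$ around $0_{k+1}$), since these two circles cobound an annulus $A_k^+ \simeq A_{k+1}^-$ inside $\Sigma$ on which the relevant 1-form is holomorphic. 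This is where the powers of $t$ and the alternating exponents $(-1)^{k+1}$ have to be bookkept carefully: the upshot should be that $\conj^k \int_{\partial D_k^+} g_k^{-1}\omega$ equals, up to a universal constant and the factor $t^2$ absorbed into $dh$, the quantity $\conj^{k+1}\int_{\partial D_{k+1}^-} g_{k+1}\omega = \cal{G}_{k+1}(t,\bm v)$, while $\conj^k\int_{\partial D_k^-} g_k^{-1}\omega$ relates to $\cal{G}_k(t,\bm v)$. Thus \eqref{eq:regularity2} for each $k$ becomes exactly the difference equation $\cal{G}_{k+1}(t,\bm v) = \cal{G}_k(t,\bm v)$ — the balancing condition — \emph{except} that this chain of equalities has one degree of freedom: knowing all the differences $\cal{G}_{k+1}-\cal{G}_k$ vanish is equivalent to $(\cal{G}_k)$ being a constant sequence, but one still must pin down that the common constant is the \emph{correct} one, and that is precisely what \eqref{period-problem-gamma} supplies. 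Concretely, $\int_\gamma \Phi_1 = \frac12\int_\gamma(g^{-1}-g)\,dh$ and $\int_\gamma\Phi_2 = \frac{\ii}{2}\int_\gamma(g^{-1}+g)\,dh$; since $\gamma$ is homologous to any $\partial D_k^-$, evaluating $\int_\gamma g^{-1}\,dh$ and $\int_\gamma g\,dh$ on, say, $\partial D_0^-$ and $\partial D_1^-$ expresses \eqref{period-problem-gamma} in terms of $\cal{G}_0$ and $\cal{G}_1$ (or $\cal{G}_0$ and the common value), and the two real equations \eqref{period-problem-gamma} should combine into the single complex equation fixing $\cal{G}_0 = \cal{G}_1$, i.e.\ closing up the chain.

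I would organize the write-up in two directions: (i) assuming $\cal{G}_k \equiv \cal{G}_0$, verify \eqref{eq:regularity2} and \eqref{period-problem-gamma} by substituting back; (ii) conversely, assuming the regularity and period equations, derive that consecutive $\cal{G}_k$ agree. Both directions rest on the same residue computations, done once. The homological fact that $\gamma \sim \partial D_k^-$ for every $k$ is what makes $\int_\gamma g^{\pm1}\,dh$ computable at any level $k$; combined with the neck gluing relation it gives the link $\cal{G}_k \leftrightarrow \cal{G}_{k+1}$.

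The main obstacle I expect is the careful tracking of orientations, of the conjugations $\conj^k$ (recall $\cv\tau_k = (-\conj)^k\tau$, so for $k$ odd the tori are "mirrored" and one must conjugate to compare periods), and of the factors of $t$ and $t^2$ coming from $dh = t\omega$ and the neck parameter $z_k^+z_{k+1}^- = t^2$ — these must conspire so that the $\cal{G}_k$ as defined (with the $\conj^k$ built in and no spurious powers of $t$) are exactly the right quantities. The analytic content is light: no Implicit Function Theorem here, just residues and homology, so once the bookkeeping is pinned down the equivalence is essentially formal.
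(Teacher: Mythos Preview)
Your overall strategy---residues plus the neck gluing---is right, but you have miscounted the index shift in the key step, and this changes the logical structure of the argument. You claim that $\int_{\partial D_k^-} g_k^{-1}\omega$ relates to $\cal{G}_k$ and $\int_{\partial D_k^+} g_k^{-1}\omega$ to $\cal{G}_{k+1}$, so that \eqref{eq:regularity2} becomes $\cal{G}_{k+1}=\cal{G}_k$. That is not what happens. Since $z_k^{\pm}=1/g_k$ near the poles, on $\partial D_k^-$ one has $g_k^{-1}\omega=z_k^-\,\omega$; the gluing $z_{k-1}^+ z_k^-=t^2$ converts this into $t^2 g_{k-1}\,\omega$ on a circle in $T_{k-1}$, which (using that $g_{k-1}\omega$ is holomorphic in $\Omega_{k-1}$) equals $-t^2\conj^{k-1}\cal{G}_{k-1}$. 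Likewise $\partial D_k^+$ yields $t^2\conj^{k+1}\cal{G}_{k+1}$. \emph{Both} boundary circles of $\Omega_k$ push across a neck into a neighboring torus; neither stays at level $k$. Hence \eqref{eq:regularity2} is equivalent to $\cal{G}_{k+1}=\cal{G}_{k-1}$ for all $k$: the even- and odd-indexed subsequences are separately constant, but a priori with two different values.

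This also corrects the role you assign to \eqref{period-problem-gamma}. It is not needed to ``pin down the correct value of the common constant''---the proposition asserts only constancy (the specific value $-2\pi\ii\,G(q_0;\tau)$ is a normalization handled in the next proposition, not here). Rather, evaluating $\int_\gamma g^{-1}\,dh$ and $\overline{\int_\gamma g\,dh}$ on $\partial D_1^-$ gives $t^2\cal{G}_0$ and $t^2\cal{G}_1$ respectively, so \eqref{period-problem-gamma} reduces to the single complex equation $\cal{G}_0=\cal{G}_1$. This is what bridges the two parities; combined with $\cal{G}_{k+1}=\cal{G}_{k-1}$ it yields $\cal{G}_k=\cal{G}_0$ for all $k$. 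Your bookkeeping worries about orientations and factors of $t$ are legitimate but routine; the point you are actually missing is this two-step versus one-step recurrence.
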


\begin{proof}
	We have for $k\in\Z$:
	\begin{align*}
		\int_{\partial\Omega_k}g_k^{-1}\omega
		&= -\int_{\partial D_k^-}z_k^-\omega-\int_{\partial D_k^+}z_k^+\omega\\
		&=\int_{\partial D_{k-1}^+}\frac{t^2}{z_{k-1}^+}\omega
		+\int_{\partial D_{k+1}^-}\frac{t^2}{z_{k+1}^-}\omega\\
		&=t^2\int_{\partial D_{k-1}^+}g_{k-1}\omega+t^2\int_{\partial D_{k+1}^-}g_{k+1}\omega\\
		&=-t^2\int_{\partial D_{k-1}^-}g_{k-1}\omega+t^2\int_{\partial D_{k+1}^-}g_{k+1}\omega
		\quad\text{(because $g_{k-1}\omega$ is holomorphic in $\Omega_{k-1}$)}\\
		&=t^2\conj^{k+1}(\cal{G}_{k+1}-\cal{G}_{k-1}).
	\end{align*}
	Hence \eqref{eq:regularity2} is equivalent to
	$\cal{G}_{k+1}=\cal{G}_{k-1}$ for $k\in\Z$.

	We chose $\partial D_1^-$ as a representative of $\gamma$. Equation
	\eqref{period-problem-gamma} is equivalent to
	\[
		\int_{\partial D_1^-}g^{-1}\,dh-\conj\Big(\int_{\partial D_1^-}g\,dh\Big)=0.
	\]
	We have
	\[
		\int_{\partial D_1^-}\overline{g\,dh}=t^2\int_{\partial D_1^-}\overline{g_1\omega}=t^2\cal{G}_1.
	\]
	\[
		\int_{\partial D_1^-}g^{-1}\,dh=\int_{\partial D_1^-}g_1^{-1}\omega
		=-t^2\int_{\partial D_0^+}g_0\omega
		=t^2\int_{\partial D_0^-}g_0\omega
		=t^2\cal{G}_0.
	\]
	Hence \eqref{period-problem-gamma} is equivalent to $\cal{G}_1=\cal{G}_0$.
\end{proof}

\begin{proposition} \label{prop:balancing2}
	Assume that the configuration $\bm{q}=(q_k)$ is balanced and non-degenerate.
	For $t$ in a neighborhood of $0$, there exists a unique $\bm v(t)$ in
	$\ellinf$, depending smoothly on $t$, such that $v_k(0)=(-\conj)^k q_k$ and
	\begin{equation} \label{eq:normalisation}
		\cal{G}_k(t,\bm v(t))=-2\pi\ii G(q_0;\tau)
	\end{equation}
	for all $k\in\Z$, so \eqref{eq:regularity2} and \eqref{period-problem-gamma}
	are solved.
\end{proposition}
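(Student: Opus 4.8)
The plan is to solve \eqref{eq:normalisation} by the Implicit Function Theorem applied to the map $\cal{N}(t,\bm v)=(\cal{N}_k(t,\bm v))_{k\in\Z}$ with $\cal{N}_k(t,\bm v):=\cal{G}_k(t,\bm v)+2\pi\ii\,G(q_0;\tau)$, treated as a map of $(t,\bm v)$ into $\ellinf$ with unknown $\bm v\in\ellinf$, near the point $(t,\bm v)=(0,\cv{\bm v})$ where $\cv v_k=(-\conj)^k q_k$. One should first note why the stronger normalisation \eqref{eq:normalisation} is used instead of the system $\cal{G}_k=\cal{G}_0$ coming from Proposition~\ref{prop:balancing1}: the latter leaves the common value of $\cal{G}_k$ free and hence does not determine $\bm v$, whereas \eqref{eq:normalisation} imposes exactly one scalar equation per index $k$, producing a square system whose linearisation at $t=0$ will be an automorphism of $\ellinf$ precisely under the non-degeneracy hypothesis.

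First I would evaluate $\cal{G}_k$ at $t=0$. By Propositions~\ref{prop:period-problem1} and~\ref{prop:regularity1}, at $t=0$ the parameters are $a_k=-1/2$, $\tau_k=(-\conj)^k\tau$ and $b_k=-a_k\xi(v_k;\tau_k)$, the last depending only on $v_k$; hence by \eqref{eq:omega0-2} one has $g_k\,dz=a_k\omega$ in $T_k$, so $g_k\omega=a_k^{-1}g_k^2\,dz$ and $\cal{G}_k(0,\bm v)$ depends only on $v_k$. Since $g_k(z)=a_k/z+a_k\,G(v_k;\tau_k)+O(z)$ near $0_k$ (with $G(q;\tau)=\zeta(q;\tau)-\xi(q;\tau)$ as in the introduction), a residue computation gives $\int_{\partial D_k^-}g_k\omega=4\pi\ii\,a_k\,G(v_k;\tau_k)=-2\pi\ii\,G(v_k;\tau_k)$ at $t=0$. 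To evaluate this at $\bm v=\cv{\bm v}$ I would use the conjugation identity $G(-\bar z;-\bar\tau)=-\overline{G(z;\tau)}$, which follows from $\zeta(z;-\bar\tau)=\overline{\zeta(\bar z;\tau)}$, $\eta_1(-\bar\tau)=\overline{\eta_1(\tau)}$, and the oddness of $G$; combined with the outer $\conj^k$ this turns $\cal{G}_k(0,\cv{\bm v})$ into $-2\pi\ii\,G(q_k;\tau)$ for every $k$ (both parities). Since $\bm q$ is balanced, $G(q_k;\tau)=G(q_0;\tau)$, so $\cal{N}_k(0,\cv{\bm v})=0$ for all $k$.

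Next comes smoothness and the linearisation. Deforming $\partial D_k^-$ to a fixed loop around $0_k$ inside $\psi_k(\wt{\Omega}_k)$ and not encircling $v_k$ — possible uniformly in $k$ by Hypothesis~\ref{hyp:separate} — the integrand $g_k\omega$ becomes, after pullback by $\psi_k$, a smooth family over a fixed contour, so Proposition~\ref{prop:omega-smooth} shows $(t,\bm v)\mapsto\cal{N}(t,\bm v)$ is smooth into $\ellinf$. Since $\cal{N}_k(0,\bm v)$ depends only on $v_k$, the partial differential $D_{\bm v}\cal{N}(0,\cv{\bm v})$ is block-diagonal; differentiating $-2\pi\ii\,G(v_k;\tau)$ for $k$ even and $2\pi\ii\,G(\bar v_k;\tau)$ for $k$ odd (using $G(z;-\bar\tau)=\overline{G(\bar z;\tau)}$), and using the oddness $D_qG(-q;\tau)=D_qG(q;\tau)$, the $k$-th block equals $-2\pi\ii\,D_qG(q_k;\tau)$ for $k$ even and $(2\pi\ii\,D_qG(q_k;\tau))\circ\conj$ for $k$ odd. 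In either case it differs from the $k$-th block of the differential of $(q_k)\mapsto(G(q_k;\tau))$ only by composition with fixed invertible operators (multiplication by $\mp2\pi\ii$, and possibly $\conj$); hence $D_{\bm v}\cal{N}(0,\cv{\bm v})$ is an automorphism of $\ellinf$ if and only if $\bm q$ is non-degenerate. The Implicit Function Theorem then produces the unique $\bm v(t)$ with $\bm v(0)=\cv{\bm v}$, smooth in $t$, solving \eqref{eq:normalisation}; Proposition~\ref{prop:balancing1} then yields \eqref{eq:regularity2} and \eqref{period-problem-gamma}.

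The hard part is the bookkeeping in the linearisation step: one must carry along both the inner conjugations $(-\conj)^k$ in the arguments and the outer $\conj^k$ in the definition of $\cal{G}_k$, and verify that the resulting differential at $t=0$ matches the non-degeneracy operator \emph{exactly} (including that $\sup_k\|D_qG(q_k;\tau)^{-1}\|<\infty$, which is what non-degeneracy provides, given that the $q_k$ stay in a compact subset of $T$ bounded away from $0$ by Hypothesis~\ref{hyp:separate}). This is also the only place where the balancing hypothesis is used, through $G(q_k;\tau)=G(q_0;\tau)$. The residue computation and the reduction of smoothness to Proposition~\ref{prop:omega-smooth} are routine.
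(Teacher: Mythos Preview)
Your proof is correct and follows essentially the same route as the paper: compute $\cal{G}_k$ at $t=0$ by a residue (using $g_k\,dz=a_k\omega$, which is the same as the paper's $g_k\omega/dz=a_k(\omega/dz)^2$), invoke the conjugation identity $G(-\bar z;-\bar\tau)=-\overline{G(z;\tau)}$, and apply the Implicit Function Theorem via balance and non-degeneracy. The only cosmetic difference is that the paper introduces the change of variable $u_k=(-\conj)^k v_k$ so that $\cal{G}_k(0,\bm v)=-2\pi\ii\,G(u_k;\tau)$ uniformly in $k$, which replaces your separate even/odd bookkeeping of the linearisation by a single line.
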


\begin{proof}
	First of all, $(\cal{G}_k(t,\bm v))$ is a smooth function of $(t,\bm v)$ with
	value in $\ellinf$ by Proposition~\ref{prop:omega-smooth}.  At $t=0$, we have
	in $T_k$:
	\[
		g_k\frac{\omega}{dz}=a_k\left(\frac{\omega}{dz}\right)^2=\frac{-1}{2}\left(\zeta(z;\tau_k)-\zeta(z-v_k;\tau_k)-\xi(v_k;\tau_k)\right)^2
	\]
	\begin{align*}
		\Res_{0_k}(g_k\omega)&=\frac{-1}{2}\Res_{0_k}\left[
		\zeta(z;\tau_k)^2-2\zeta(z;\tau_k)\big(\zeta(z-v_k;\tau_k)+\xi(v_k;\tau_k)\big)\right]\\
		&=-\zeta(v_k;\tau_k)+\xi(v_k;\tau_k)=-G(v_k;\tau_k).
	\end{align*}
	Here we used the fact that $\zeta$ is odd, hence $\zeta^2$ is even and has no
	residue at $0$.  Then by the Residue Theorem,
	\[
		\cal{G}_k(0,\bm v)=\conj^k\big(-2\pi\ii\,G(v_k;\tau_k)\big)
		=-2\pi\ii(-\conj)^k G(v_k;\tau_k).
	\]
	Recall that $\tau_k=(-\conj)^k(\tau)$ at $t=0$.  Now change to the variable
	$u_k = (-\conj)^k v_k$ with central value $\cv u_k = (-\conj)^k \cv v_k =
	q_k$.  Using the definition of $\zeta$, $\xi$ and $G$, one easily checks that
	\[
		G(-\overline{z};-\overline{\tau})=-\conj{G(z;\tau)}.
	\]
	Hence
	\[
		\cal{G}_k(0,\bm v)=-2\pi\ii \,G(u_k;\tau).
	\]
	Then Proposition \ref{prop:balancing2} follows from the balance and
	non-degeneracy of $(q_k)$ and the Implicit Function Theorem.
\end{proof}

\begin{remark}
	The horizontal component of the flux of $\gamma$, identified with a complex
	number, is given by
	\[
		-\ii\int_{\gamma} g\,dh=-\ii\int_{\partial D_1^-}t^2 g_1\omega=-\ii t^2\overline{\cal{G}_1}.
	\]
	So \eqref{eq:normalisation} for $k=1$ normalizes the horizontal part of the
	flux of $\gamma$ (which can also be taken as a free parameter).
\end{remark}

\subsection{Embeddedness} \label{ssec:embedded}

We denote by $\x(t)$ the value of the parameters given by Proposition
\ref{prop:regularity1}, \ref{prop:period-problem1} and \ref{prop:balancing2},
$(\Sigma_t,g_t,dh_t)$ the corresponding Weierstrass data,
$f_t:\Sigma_t\to\R^3/\Gamma$ the immersion given by Weierstrass Representation
and $M_t=f_t(\Sigma_t)$.  Recall that $\Gamma$ is the 2-dimensional lattice
generated by the horizontal vectors $(1,0,0)$ and $(\Re\tau, \Im\tau, 0)$.  The
goal of this section is to prove that $M_t$ is embedded and has the geometry
described in Section \ref{hyp:separate}.  The argument is very similar to
Section 4.10 of \cite{morabito2012}, so we will only sketch it.

\medskip

We write $f_t=(X_t,h_t)$ with
\[
	X_t(z)=\frac{1}{2}\conj\Big(\int_{z_0}^z g_t^{-1}dh_t\Big)-\frac{1}{2}\int_{z_0}^z g_t\,dh_t
	\quad\text{and}\quad h_t(z)=\Re\int_{z_0}^z dh_t.
\]
We fix a base point $\wt{O}_k$ in $\wt{\Omega}_k$, away from the zeros of
$g_k\circ\psi_k$, and let $O_k=\psi_k(\wt{O}_k)\in\Omega_k$.  Let
$w_k(t)\in\Sigma_t$ be the point $z_k^+=t$ which is identified with the point
$z_{k+1}^-=t$ (the ``middle'' of the $k$-th neck).  For $r>0$, we denote
$\Omega_{k,r}$ the torus $T_k$ minus the two disks $|z_k^{\pm}|\leq r$.  By the
computations in Section \ref{ssec:period-problem}, we have in $\Omega_{k,r}$
\begin{equation} \label{limit-dXt}
	\lim_{t\to 0}dX_t=(-\conj)^k dz
\end{equation}
so the image of $\Omega_{k,r}$ is a graph for $t$ small enough and
\[
	\lim_{t\to 0}X_t(w_k(t))-X_t(w_{k-1}(t))=(-\conj)^k\int_{0_k}^{\cv{v}_k}dz
	=(-\conj)^k(\cv{v}_k)=q_k.
\]
\begin{remark}
	This computation is not rigorous because the limit \eqref{limit-dXt} only
	holds in $\Omega_{k,r}$. It can be made rigorous by expanding $\omega_t$ in
	Laurent series in the annuli $A^\pm_k$, see details in Appendix A of
	\cite{morabito2012}.
\end{remark}
Recall that $q_k=p_k-p_{k-1}$.  We may translate $f_t$ horizontally so that
$X_t(w_0(t))=p_0$. Then
\[
	\lim_{t\to 0}X_t(w_k(t))=p_k.
\]
In other words, $p_k$ is the limit position of the $k$-th neck.  Note however
that the convergence is not uniform with respect to $k$. By~\eqref{eq:omega0},
we have for $z\in\Omega_{k,r}$
\begin{multline*}
	\lim_{t\to 0}\frac{1}{t}(h_t(z)-h_t(O_k))= \Re\,\lim_{t\to 0}\int_{O_k}^z
	\omega_t\\
	= \Re\int_{O_k}^z\left(\zeta(z;\cv{\tau}_k)-\zeta(z-\cv{v}_k;\cv{\tau}_k)-\xi(\cv{v}_k;\cv{\tau}_k)\right)\,dz=:\Upsilon_k(z).
\end{multline*}
The function $\Upsilon_k$ is bounded in $\Omega_{k,r}$ by a uniform constant $C(r)$
depending only on $r$.  By Lemma A.2 in \cite{morabito2012},
\[
	\int_{O_{k-1}}^{O_k}\omega_t\simeq -2\log t\Res_{0_k}(\omega_0)=-2\log t\quad\text{as $t\to 0$.}
\]
Hence
\[
	h_t(O_k)-h_t(O_{k-1}) \sim -2t\log t.
\]
This ensures that for $t$ small enough, $f_t(\Omega_{k,r})$ lies strictly above
$f_t(\Omega_{k-1,r})$ so the images $f_t(\Omega_{k,r})$ for $k\in\Z$ are
disjoint.

Since the function $\Upsilon_k$ has two logarithmic singularities at $0_k$ and
$\cv{v}_k$, for $c$ large enough, the level lines $\Upsilon_k=\pm c$ are convex
curves, which are included in $\Omega_{k,r}$ provided $r$ is small enough.
Define
\[
	h_k^{\pm}=h_t(O_k)\pm tc.
\]
Then for $t$ small enough, $M_t$ intersects the planes $x_3=h_k^+$ and
$x_3=h_k^-$ in two convex curves denoted $\gamma_k^+$ and $\gamma_k^-$, which
are included in $f_t(\Omega_{k,r})$. Then $M_t\cap\{h_k^+<x_3<h_{k+1}^-\}$ is a
minimal annulus bounded by two convex curves in parallel planes.  Such an
annulus is foliated by convex curves by a theorem of Shiffman
\cite{shiffman1956}.  This proves that $M_t$ is embedded.

\section{Convergence to TPMSs}\label{sec:asymptotic-immersion}

In this section, we study the asymptotic behavior of the minimal surfaces that
we just constructed.  Readers who are not interested in the asymptotic behavior
may skip this part and jump directly to the next section.

Assume that the configuration $(q_k)$ is periodic. Then the corresponding
minimal surface $M_t$ is a TPMS (as an easy consequence of uniqueness in the
Implicit Function Theorem).  Let $(q_k')$ be another balanced, non-degenerate
configuration with the same horizontal lattice $\Gamma$, and assume that
$q'_k=q_k$ for all $k\geq 0$.  Let $M'_t$ be the family of minimal surfaces
corresponding to the configuration $(q'_k)$.  In this section, we prove that
$M'_t$ is asymptotic to a translation of $M_t$ as the vertical coordinate $x_3
\to +\infty$.  This finally justify the term ``TPMS twinning''.

We use primes for all objects associated to the configuration $(q'_k)$.  So
$\Sigma'=\Sigma[t,\x'(t)]$ and $f':\Sigma'\to\R^3/\Gamma$ is the immersion
given by the Weierstrass data $g'=g[t,\x'(t)]$ and $dh'=t\omega[t,\x'(t)]$.  We
will omit the dependence on $t$, hence will write, for instance, $\omega =
\omega[t,\x(t)]$ and $\omega'=\omega[t,\x'(t)]$, and in the same way
$\psi_k=\psi_k[t,\x(t)]$ and $\psi'_k=\psi_k[t,\x'(t)]$.  Otherwise notations
are as in Section~\ref{ssec:embedded}.

We use the same letter $C$ to denote any constant that is independent of $t>0$
and $k\in\Z$.

\medskip

Fix an arbitrary $\delta > 1$.  We will prove in
Proposition~\ref{prop:parameter-decay} that, for $t$ sufficiently small,
$\x-\x'$ decays like $\delta^{-k}$.  That is
\[
	\|x_k - x'_k\| \leq \frac{C}{\delta^k}.
\]
Then Proposition~\ref{prop:omega-decay} implies that, for $t$ sufficiently
small, the difference between $\psi_k^*\omega$ and $(\psi'_k)^*\omega'$ in
$\wt\Omega_k$ also decays like $\delta^{-k}$; see
Equation~\eqref{eq-omega-decay} below.

It is convenient to scale the third coordinate of $f$ and $f'$ by
$t^{-1}$.  So let $S:\R^3\to\R^3$ be the linear map defined by
$S(x_1,x_2,x_3)=(x_1,x_2, t^{-1}x_3)$ and define $\wh{f}=S\circ f$ and
$\wh{f}'=S\circ f'$.

\begin{proposition} \label{prop:immersion-decay1} For $k\in\N$,
	\[
		\| d(\wh{f}\circ\psi_k) - d(\wh{f}'\circ\psi_k') \|_{C^0(\wt{\Omega}_k)} \leq \frac{C}{\delta^k}
	\]
\end{proposition}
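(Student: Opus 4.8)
The plan is to express both differentials $d(\wh f\circ\psi_k)$ and $d(\wh f'\circ\psi_k')$ in terms of the Weierstrass data pulled back to the fixed domain $\wt\Omega_k$, and then estimate the difference term by term using the already-established parameter decay and 1-form decay. Recall that on $\wt\Omega_k$ we have, after scaling the third coordinate by $t^{-1}$,
\[
	d(\wh f\circ\psi_k)=\Big(\tfrac12\conj\big(\psi_k^*(g^{-1}\omega)\big)-\tfrac12\psi_k^*(g\,\omega),\ \Re\,\psi_k^*\omega\Big),
\]
and similarly for the primed data with $\psi_k'$ replacing $\psi_k$. So it suffices to bound, in $C^0(\wt\Omega_k)$, the three quantities
\[
	\big\| \psi_k^*\omega-(\psi_k')^*\omega' \big\|,\quad
	\big\| \psi_k^*(g\,\omega)-(\psi_k')^*(g'\omega') \big\|,\quad
	\big\| \psi_k^*(g^{-1}\omega)-(\psi_k')^*((g')^{-1}\omega') \big\|,
\]
each by $C/\delta^k$. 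The first of these is exactly Equation~\eqref{eq-omega-decay}, which follows from Propositions~\ref{prop:parameter-decay} and~\ref{prop:omega-decay}. For the other two, I would observe that on $\Omega_k$ we have the explicit identities from Section~\ref{ssec:period-problem}: $g\,dh$ equals $g_k^{-1}\omega$ or $t^2g_k\omega$ according to the parity of $k$, and $g^{-1}dh$ equals the other one; since $dh=t\omega$, this expresses $g\,\omega$ and $g^{-1}\omega$ in $T_k$ purely in terms of $g_k$ and $\omega$. The factor $t^2$ is harmless (it is independent of $k$), so it remains to control $\psi_k^*(g_k\omega)-(\psi_k')^*(g_k'\omega')$ and $\psi_k^*(g_k^{-1}\omega)-(\psi_k')^*((g_k')^{-1}\omega')$.

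For these, I would split each difference via the telescoping trick: write, schematically,
\[
	g_k\omega-g_k'\omega'=(g_k-g_k')\,\omega+g_k'\,(\omega-\omega'),
\]
and bound each summand separately. The factor $\omega$ (resp.\ $g_k'$) is uniformly bounded on $\Omega_k$ by Hypothesis~\ref{hyp:separate} and Proposition~\ref{prop:omega-smooth}, since $\varepsilon$ (equivalently $\varepsilon'$) is chosen uniformly in $k$; the $\omega-\omega'$ factor is $C/\delta^k$ by Equation~\eqref{eq-omega-decay}. For the $g_k-g_k'$ factor, recall that $g_k(z)=a_k(\zeta(z;\tau_k)-\zeta(z-v_k;\tau_k))+b_k$ is an explicit function of the parameters $a_k,b_k,v_k,\tau_k$, and away from its two poles $0_k$ and $v_k$ (hence on $\Omega_k$, on which $\zeta$ and $\wp$ and their $\tau$-derivatives are uniformly bounded) it depends Lipschitz-continuously on these parameters with a uniform Lipschitz constant. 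Since $\|x_k-x_k'\|\leq C/\delta^k$ by Proposition~\ref{prop:parameter-decay}, this gives $\|g_k-g_k'\|\leq C/\delta^k$ on the relevant region. A small subtlety is that one must also account for the diffeomorphisms $\psi_k$ versus $\psi_k'$ being different (they differ through $\tau_k$ versus $\tau_k'$); but $\|\tau_k-\tau_k'\|\leq C/\delta^k$, and $\psi_k,\psi_k'$ are affine in the coordinates with uniformly bounded derivatives, so composing and pulling back introduces only another $C/\delta^k$ correction by the same mean-value argument. For the $g_k^{-1}$ term one argues identically after noting that $g_k$ is uniformly bounded away from $0$ on the region in question (its zeros $Z_{k,1},Z_{k,2}$ lie in $\Omega_k$ but the relevant integration contours in the Weierstrass formula for $\wh f$ can be taken to avoid a uniform neighborhood of them — this is already implicit in Section~\ref{ssec:embedded} where $\wt O_k$ is chosen away from the zeros of $g_k\circ\psi_k$), so $g_k^{-1}-(g_k')^{-1}=-g_k^{-1}(g_k-g_k')(g_k')^{-1}$ is again $C/\delta^k$.

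The main obstacle, I expect, is not any single estimate but the bookkeeping of \emph{uniformity in $k$} throughout: one must check that every "constant" appearing — the Lipschitz constant of $g_k$ in its parameters, the lower bound on $|g_k|$ on contours, the operator norms of $\psi_k$ and $\psi_k'$ and their inverses, the $C^0$ bounds on $\omega$ and $\omega'$ on $\wt\Omega_k$ — is genuinely independent of $k$, which ultimately rests on Hypothesis~\ref{hyp:separate} (uniform separation keeps the poles $0_k$ and $v_k$ a fixed distance apart, so $\varepsilon'$ and all derived bounds are $k$-independent) together with the fact that $\x(t)$ and $\x'(t)$ stay in a fixed $\ellinf$-neighborhood of $\cv\x$. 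Once this uniformity is in hand, the three-term estimate combines to give
\[
	\|d(\wh f\circ\psi_k)-d(\wh f'\circ\psi_k')\|_{C^0(\wt\Omega_k)}\leq \frac{C}{\delta^k},
\]
which is the claim. I would close by remarking that $\delta>1$ was fixed arbitrarily at the start, so the decay rate is as fast as one likes, subject to taking $t$ correspondingly small as required by Proposition~\ref{prop:parameter-decay}.
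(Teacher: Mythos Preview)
Your argument is essentially the paper's for the $\omega$ and $g_k\omega$ terms, but there is a genuine gap in your treatment of $g_k^{-1}\omega$. The zeros $Z_{k,1},Z_{k,2}$ of $g_k$ lie in $\Omega_k$, hence inside the domain $\wt\Omega_k$ on which the proposition asserts a $C^0$ bound. You cannot sidestep this by saying that ``integration contours can be taken to avoid'' these zeros: the proposition is about the pointwise differential $d(\wh f\circ\psi_k)$ on all of $\wt\Omega_k$, and this is what is actually used downstream (in the proof of Proposition~\ref{prop:TPMS-decay} one needs $\|f\circ\psi_k-f'_{\ell}\circ\psi'_{k+\ell N}\|_{C^0(\wt\Omega_k)}$ small, not merely small on some contour). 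Your telescoping identity $g_k^{-1}-(g_k')^{-1}=-g_k^{-1}(g_k-g_k')(g_k')^{-1}$ blows up near $Z_{k,i}$ and near $Z'_{k,i}$, which are in general distinct points, so the ``uniformly bounded away from $0$'' claim is simply false on the region in question.

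The paper's fix is the one idea you are missing: since the Regularity Problem has been solved for both $\x$ and $\x'$, each of $g_k^{-1}\omega$ and $(g_k')^{-1}\omega'$ extends holomorphically across its apparent poles, so their difference is a genuinely holomorphic $1$-form on $\Omega_k$. One first runs your telescoping argument on $\wt\Omega_k\setminus(\wt D_{k,1}\cup\wt D_{k,2})$, where $g_k^{-1}$ is uniformly bounded; then, after a small correction comparing $\psi_k^*$ and $(\psi_k')^*$ (which you did anticipate), one obtains the $C/\delta^k$ bound on $\partial\wt D_{k,i}$ for the holomorphic quantity $\psi_k^*[g_k^{-1}\omega-(g_k')^{-1}\omega']$; the Maximum Principle then propagates the bound into the disks. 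That Maximum Principle step is the missing ingredient.
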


\begin{proof}
	We have in $\Omega_k$
	\begin{equation} \label{eq-df}
		d\wh{f}=\left\{\begin{array}{l}
				(\frac{1}{2}(t^2\overline{g_k\omega}-g_k^{-1}\omega),\Re\omega)
				\quad\text{if $k$ is even}\\
				(\frac{1}{2}(\overline{g_k^{-1}\omega}-t^2g_k\omega),\Re\omega)
		\quad\text{if $k$ is odd}\end{array}\right.
	\end{equation}
	and similar formulas for $d\wh{f}'$.  By Propositions \ref{prop:omega-decay}
	and \ref{prop:parameter-decay}, we have
	\begin{equation} \label{eq-omega-decay}
		\|\psi_k^*\omega- (\psi'_k)^*\omega'\|_{C^0(\wt{\Omega}_k)}\leq\frac{C}{\delta^k}.
	\end{equation}
	Using Proposition \ref{prop:parameter-decay} and the definition of $g_k$, we
	obtain
	\begin{equation} \label{eq-gkomega-decay}
		\|\psi_k^*(g_k\omega)- (\psi'_k)^*(g_k'\omega')\|_{C^0(\wt{\Omega}_k)}\leq\frac{C}{\delta^k}.
	\end{equation}
	Let $\wt{D}_{k,1}$ and $\wt{D}_{k,2}$ be two disks containing the two zeros
	of $g_k\circ\psi_k$, so that $g_k^{-1}\circ\psi_k$ is uniformly bounded
	outside these disks. Then
	\begin{equation} \label{eq-gkinvomega-decay}
		\|\psi_k^*(g_k^{-1}\omega)- (\psi'_k)^*((g_k')^{-1}\omega')\|_{C^0(\wt{\Omega}_k\setminus\wt{D}_{k,1}\cup\wt{D}_{k,2})}\leq\frac{C}{\delta^k}.
	\end{equation}
	Since the Regularity Problem is solved, $(g'_k)^{-1}\omega'$ extends
	holomorphically to the zeros of $g'_k$.  Since $\psi_k(x+\ii y) -
	\psi_k'(x+\ii y) = (\tau_k-\tau_k')y$, we have
	\begin{equation} \label{eq-gkinvomega-decay2}
		\|\psi_k^*((g'_k)^{-1}\omega')- (\psi'_k)^*((g_k')^{-1}\omega')\|_{C^0(\wt{\Omega}_k)}\leq\frac{C}{\delta^k}.
	\end{equation}
	From \eqref{eq-gkinvomega-decay} and \eqref{eq-gkinvomega-decay2}, we obtain
	by the Triangular Inequality
	\[
		\|\psi_k^*\left[g_k^{-1}\omega-(g_k')^{-1}\omega'\right]\|_{C^0(\partial \wt{D}_{k,i})}\leq\frac{C}{\delta^k}.
	\]
	Since $g_k^{-1}\omega-(g'_k)^{-1}\omega'$ is holomorphic, we obtain by the
	Maximum Principle (for the holomorphic structure on $\T$ induced by $\psi_k$)
	\[
		\|\psi_k^*\left[g_k^{-1}\omega-(g_k')^{-1}\omega'\right]\|_{C^0(\wt{D}_{k,i})}\leq\frac{C}{\delta^k}.
	\]
	Using \eqref{eq-gkinvomega-decay2},
	\begin{equation} \label{eq-gkinvomega-decay3}
		\|\psi_k^*(g_k^{-1}\omega)- (\psi'_k)^*((g_k')^{-1}\omega')\|_{C^0(\wt{D}_{k,i})}\leq\frac{C}{\delta^k}.
	\end{equation}
	Proposition \ref{prop:immersion-decay1} follows from \eqref{eq-omega-decay},
	\eqref{eq-gkomega-decay}, \eqref{eq-gkinvomega-decay} and
	\eqref{eq-gkinvomega-decay3}.
\end{proof}

Recall that $O_k=\psi_k(\wt{O}_k)$ and $O'_k=\psi'_k(\wt{O}_k)$.
\begin{proposition}
	\label{prop:immersion-decay2}
	For $k\in\N$,
	\[
		\|\wh{f}(O_{k+1})-\wh{f}(O_k)-\wh{f}'(O'_{k+1})+\wh{f}'(O'_k)\|\leq \frac{C}{\delta^k}.
	\]
	Hence the sequence $(\wh{f}(O_k)-\wh{f}'(O_k'))_{k\in\N}$ is Cauchy. By
	translation, we may assume that its limit is zero, so
	\[
		\|\wh{f}(O_k)-\wh{f}'(O_k')\|\leq\frac{C'}{\delta^k}\quad\text{ with }C'=\frac{C\delta}{\delta-1}.
	\]
\end{proposition}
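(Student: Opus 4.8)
The plan is to realize each jump $\wh f(O_{k+1})-\wh f(O_k)$ as the integral of $d\wh f$ along a path in $\Sigma_t$ running from $O_k$ across the $k$-th neck to $O_{k+1}$, to do the same for $\wh f'$ along a parallel path in $\Sigma'_t$, and to compare the two integrals piece by piece. I split the path as $\Gamma_k^-\cup\Gamma_k^0\cup\Gamma_k^+$, where $\Gamma_k^-$ runs from $O_k$ to a point $P_k$ on $\{|z_k^+|=\varepsilon\}$ and stays inside $\psi_k(\wt\Omega_k)$; $\Gamma_k^0$ runs through the neck from $P_k$ to a point $Q_k$ on $\{|z_{k+1}^-|=\varepsilon\}$, using the identification $z_k^+z_{k+1}^-=t^2$ and passing through the waist $z_k^+=z_{k+1}^-=t$; and $\Gamma_k^+$ runs from $Q_k$ to $O_{k+1}$ inside $\psi_{k+1}(\wt\Omega_{k+1})$. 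I take $\Gamma_k^-$ and $\Gamma_k^+$ to be $\psi_k$ and $\psi_{k+1}$ of \emph{fixed} curves $\wt\Gamma_k^-\subset\wt\Omega_k$ and $\wt\Gamma_k^+\subset\wt\Omega_{k+1}$, and build the primed path $(\Gamma_k^-)'\cup(\Gamma_k^0)'\cup(\Gamma_k^+)'$ from the same fixed curves; then the endpoints $P_k,Q_k$ and $P'_k,Q'_k$ are $O(\delta^{-k})$-close in the coordinates $z_k^+$, $z_{k+1}^-$, since the level-$k$ and level-$(k+1)$ parameters are $O(\delta^{-k})$-close by Proposition~\ref{prop:parameter-decay}.

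For the two in-torus pieces, Proposition~\ref{prop:immersion-decay1} gives at once
\[
	\Big\|\int_{\Gamma_k^-}d\wh f-\int_{(\Gamma_k^-)'}d\wh f'\Big\|
	=\Big\|\int_{\wt\Gamma_k^-}\big(d(\wh f\circ\psi_k)-d(\wh f'\circ\psi_k')\big)\Big\|
	\le\frac{C}{\delta^k},
\]
and similarly for $\Gamma_k^+$, using $\delta^{-(k+1)}\le\delta^{-k}$.

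The heart of the matter is the neck piece $\Gamma_k^0$. On the neck, $z_k^+=1/g_k$ by the very definition of $z_k^+$, so $dh=t\omega$ turns $g\,dh$ and $g^{-1}\,dh$ into $z_k^+\omega$ and $t^2(z_k^+)^{-1}\omega$, in an order depending on the parity of $k$, and likewise near $0_{k+1}$ with $z_{k+1}^-$; hence on each half of $\Gamma_k^0$ the differential $d\wh f$ is a fixed linear combination of $\omega$, $z_k^+\omega$, $t^2(z_k^+)^{-1}\omega$ and their conjugates (and, on the other half, of the analogous forms in $z_{k+1}^-$). Following Appendix~A of \cite{morabito2012}, I expand $\omega=\big(\sum_n c_n(z_k^+)^n\big)\,dz_k^+$ on the neck. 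Since $\partial D_k^+$ is homologous to $-\gamma$, we have $\int_{\partial D_k^+}\omega=-2\pi\ii$, so $c_{-1}=-1$ \emph{exactly}; the standard estimates give $|c_n|\le C$ for $n\ge-1$ and $|c_n|\le Ct^{-2(n+1)}$ for $n\le-2$, with smooth dependence on $(t,\x)$ by Proposition~\ref{prop:omega-smooth} and with difference bounded by $C\delta^{-k}$ by Propositions~\ref{prop:omega-decay} and \ref{prop:parameter-decay}; the analogous facts hold for the expansion in $z_{k+1}^-$, whose residue at $0_{k+1}$ equals $1$ exactly. Integrating term by term from $P_k$ to $Q_k$, the pole terms produce a multiple of $-\log t^2$ together with terms equal to $\pm t$ or $O(t^2\log t)$; these involve only the exact residues and the common value of $t$, and therefore cancel in the primed-minus-unprimed difference. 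The remaining regular part of the series is bounded and smooth in the parameters, so its difference is $O(\|x_k-x'_k\|+\|x_{k+1}-x'_{k+1}\|)=O(\delta^{-k})$; and the genuinely small part (of size $O(t)$, carried by the coefficients $c_n$ with $n\le-2$, which come from the opposite side of the neck) has difference $O(t\,\delta^{-k})$. The $O(\delta^{-k})$ mismatch of the endpoints contributes at most another $O(\delta^{-k})$, all integrands being bounded. Summing the three pieces, and fixing the paths consistently so that the horizontal $\Gamma$-ambiguity is a fixed lattice vector that drops out of the difference, I obtain
\[
	\|\wh f(O_{k+1})-\wh f(O_k)-\wh f'(O'_{k+1})+\wh f'(O'_k)\|\le\frac{C}{\delta^k}.
\]

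The Cauchy statement is then formal: since $\delta>1$ the increments just bounded are summable, so $(\wh f(O_k)-\wh f'(O'_k))$ is Cauchy; translating $M'_t$ rigidly by its limit makes the limit zero; and telescoping the displayed bound from $k$ to $\infty$ gives
\[
	\|\wh f(O_k)-\wh f'(O'_k)\|\le\sum_{j\ge k}\frac{C}{\delta^j}=\frac{C\delta}{\delta-1}\cdot\frac{1}{\delta^k}.
\]
The one genuinely delicate step is the neck estimate: one must split the Laurent expansion into a universal part (which cancels), a parameter-dependent regular part (controlled by the $\delta^{-k}$-decay of the parameters), and a small part, while keeping track of the powers of $t$ so that nothing blows up as $t\to0$. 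A minor but necessary bookkeeping point is to set up the parallel-path construction so that the in-torus pieces literally pull back to fixed curves in $\wt\Omega_k$ and $\wt\Omega_{k+1}$, making Proposition~\ref{prop:immersion-decay1} applicable verbatim.
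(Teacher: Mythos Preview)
Your proof is correct and follows essentially the same approach as the paper: decompose the path $O_k\to O_{k+1}$ into two in-torus pieces handled by Proposition~\ref{prop:immersion-decay1} and a neck piece handled by the Laurent expansion of $\omega$ (following Appendix~A of \cite{morabito2012}), with the residue term cancelling exactly and the remaining coefficients controlled by the $\delta^{-k}$-decay from Propositions~\ref{prop:omega-decay} and~\ref{prop:parameter-decay}. The only cosmetic difference is where you place the endpoint mismatch: the paper fixes the neck endpoints at $z_k^{\pm}=\varepsilon$ exactly (so the neck integrals are over identical intervals) and then checks that their $\psi_k^{-1}$-preimages differ by $O(\delta^{-k})$, whereas you fix the $\wt\Omega_k$-preimages and absorb the mismatch at the neck side; either bookkeeping works.
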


\begin{proof}
	Recall from Section \ref{ssec:embedded} that $w_k\in\Sigma$ denotes the
	point $z_k^+=t$, identified with $z_{k+1}^-=t$.  Similarly, we introduce
	$w'_k\in\Sigma'$ to denote the point $z'^+_k=t$, identified with
	$z'^-_{k+1}=t$.  Moreover, let $m_k^{\pm}\in\Sigma$ denotes the point
	$z_k^{\pm}=\varepsilon$, and $m'^\pm_k\in\Sigma'$ denotes the point
	$z'^\pm_k=\varepsilon$.

	Proposition \ref{prop:immersion-decay2} follows from the following three
	estimates: For $k\in\N$:
	\begin{align}
		\|\wh{f}(O_k)-\wh{f}(m_k^{\pm})-\wh{f}'(O'_k)+\wh{f}'(m'^\pm_k)\| &\leq \frac{C}{\delta^k}, \label{eq-neck1}\\
		\|\wh{f}(m_k^+)-\wh{f}(w_k)-\wh{f}'(m'^+_k)+\wh{f}'(w'_k)\| &\leq\frac{C}{\delta^k}, \label{eq-neck2}\\
		\|\wh{f}(m_{k+1}^-)-\wh{f}(w_k)-\wh{f}'(m'^-_{k+1})+\wh{f}'(w'_k)\| &\leq\frac{C}{\delta^k}. \label{eq-neck3}
	\end{align}

	Inequality \eqref{eq-neck1} follows from Proposition \ref{prop:immersion-decay1},
	\[
		\psi_k^{-1}(O_k)=(\psi'_k)^{-1}(O'_k)=\wt{O}_k
		\quad\text{and}\quad
		|\psi_k^{-1}(m_k^{\pm})-(\psi'_k)^{-1}(m'^\pm_k)|\leq\frac{C}{\delta_k}.
	\]
	To prove \eqref{eq-neck2}, we follow the proof of Lemma A.1 in
	\cite{morabito2012}.  We write the Laurent series of $\omega$ in the annulus
	$A^+_k$ in term of the complex coordinate $z=z_k^+$ as
	\[
		\omega=\frac{-dz}{z}+\sum_{n\geq 1}c_{k,n}^+z^{n-1}dz
		+\sum_{n\geq 1} t^{2n} c_{k,n}^-\frac{dz}{z^{n+1}}
	\]
	where
	\begin{gather*}
		c_{k,n}^+=\frac{1}{2\pi\ii}\int_{|z|=\varepsilon}\frac{\omega}{z^n}
		=\frac{1}{2\pi\ii}\int_{\partial D_k^+}\frac{\omega}{(z_k^+)^n}\\
		c_{k,n}^-=\frac{t^{-2n}}{2\pi\ii}\int_{|z|=\varepsilon}z^n\omega
		=\frac{1}{2\pi\ii}\int_{\partial D_k^+}(t^{-2}z_k^+)^n\omega
		=\frac{-1}{2\pi\ii}\int_{\partial D_{k+1}^-}\frac{\omega}{(z_{k+1}^-)^n}.
	\end{gather*}
	We expand $\omega'$ in the annulus $A'^+_k$ in the same way, with
	coefficients $c'^\pm_{k,n}$ given by similar formulas.  Using
	estimates~\eqref{eq-omega-decay} and~\eqref{eq-gkomega-decay}, we obtain the
	following estimate
	\[
		|c_{k,n}^{\pm}-c'^\pm_{k,n}|\leq \frac{Cn}{\delta^k(2\varepsilon)^n}.
	\]
	By integration, we obtain the following estimates (see details in Appendix A of \cite{morabito2012}):
	\begin{align*}
		\left|\int_{z=\varepsilon}^t \omega-\omega'\right| &\leq \frac{C}{\delta^k}\\
		\left|\int_{z=\varepsilon}^t z(\omega-\omega')\right| &\leq \frac{C}{\delta^k}\\
		\left|\int_{z=\varepsilon}^t t^2z^{-1}(\omega-\omega')\right| &\leq \frac{C}{\delta^k}
	\end{align*}
	Then \eqref{eq-neck2} follows from \eqref{eq-df}.  \eqref{eq-neck3} is proved
	in the same way, using $z=z_{k+1}^-$ as a local coordinate.
\end{proof}

\def\theperiod{{\cal T}}

Assume that the configuration $(q_k)$ is periodic with even period $N$, i.e.\
$q_{k+N}=q_k$. By uniqueness in the Implicit Function Theorem, the resulting
immersion $f$ is periodic. More precisely, if we define
$\sigma:\Sigma\to\Sigma$ by $z\in T_k\mapsto z\in T_{k+N}$ then
$f\circ\sigma=f+\theperiod$ (where the period $\theperiod\in\R^3$ depends on
$t$).

\begin{proposition} \label{prop:TPMS-decay}
	Let $M=f(\Sigma)$ and $M'=f'(\Sigma')$.  Define $M'_{\ell}=M'-\ell\theperiod$ for
	$\ell\in\N$. Then
	\[
		\lim_{\ell\to\infty} M'_{\ell}=M.
	\]
	Here the limit is for the smooth convergence on compact subsets of
	$\R^3/\Gamma$.
\end{proposition}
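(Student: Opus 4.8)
The plan is to use that $M$ is $\theperiod$-periodic. Since $f\circ\sigma=f+\theperiod$ and $\sigma$ carries the piece $\Omega_k$ onto $\Omega_{k+N}$ — here one uses that $N$ is even, so $\cv{v}_{k+N}=\cv{v}_k$ and $\cv\tau_{k+N}=\cv\tau_k$, and that by uniqueness in the Implicit Function Theorem the solved parameters are $N$-periodic, so $\psi_{k+N}=\psi_k$ — we get $f(\Omega_{\ell N+j})=f(\Omega_j)+\ell\theperiod$ for all $\ell,j\in\Z$. Fix a compact set $K\subset\R^3/\Gamma$, which we may assume to be $T\times[-R,R]$. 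I would show that, for $\ell$ large, the part of $M'_\ell=M'-\ell\theperiod$ that meets $K$ splits into finitely many pieces, one for each $j$ in a fixed finite index set, each of which is a $C^\infty$-small perturbation of the corresponding piece of $M$, the perturbation being of size $O(\delta^{-\ell N})\to0$.

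First I would locate these pieces. By the height computations in Section~\ref{ssec:embedded}, $h_t(O_k)-h_t(O_{k-1})\sim-2t\log t$, so for a fixed small $t>0$ the heights $h_t(O_k)$ are strictly increasing in $k$, with $h_t(O_{k+N})-h_t(O_k)$ equal to the vertical component of $\theperiod$, and the images $f(\Omega_{k,r})$ are pairwise vertically disjoint. Hence there is a finite set $J\subset\Z$, depending only on $R$ and $t$, such that $M\cap K$ lies in the union over $j\in J$ of the images of $\Omega_{j,r}$ and their two adjacent necks. Running the same argument for $M'$ (also embedded and stacked by Section~\ref{ssec:embedded}) and translating by $-\ell\theperiod$ shows that $M'_\ell\cap K$ is contained in the union over $j\in J$ of $f'(\Omega'_{\ell N+j,r})-\ell\theperiod$ together with their adjacent neck pieces, and that no other part of $M'$ comes within bounded distance of $K$ after the shift. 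Note that for $k\ge0$ the model domains coincide, $\wt\Omega'_k=\wt\Omega_k$, since $q'_k=q_k$ there.

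Next I would upgrade the decay estimates to $C^\infty$. Integrating the $C^0$-bound for $d(\wh{f}\circ\psi_k)-d(\wh{f}'\circ\psi'_k)$ from Proposition~\ref{prop:immersion-decay1} along paths in $\wt\Omega_k$ starting at $\wt{O}_k$, where Proposition~\ref{prop:immersion-decay2} bounds the difference of the maps, and using the uniformly bounded diameter of $\wt\Omega_k$, gives $\|\wh{f}\circ\psi_k-\wh{f}'\circ\psi'_k\|_{C^1(\wt\Omega_k)}\le C\delta^{-k}$. By~\eqref{eq-df}, the entries of $d\wh{f}$ in $\Omega_k$ are built from the holomorphic forms $\omega$, $g_k\omega$, $g_k^{-1}\omega$ (the last one holomorphic on $\Omega_k$ because the Regularity Problem is solved) and their complex conjugates; since $\psi_k$ and $\psi'_k$ are affine maps with uniformly bounded bi-Lipschitz constants, interior Cauchy estimates promote the $C^0$-bounds~\eqref{eq-omega-decay}, \eqref{eq-gkomega-decay}, \eqref{eq-gkinvomega-decay} and~\eqref{eq-gkinvomega-decay3} to $C^m$-bounds of order $C_m\delta^{-k}$ on compact subsets of $\wt\Omega_k$, for every $m$. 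On the neck pieces, the Laurent-coefficient estimates from the proof of Proposition~\ref{prop:immersion-decay2} are promoted to $C^m$ in the same way, using the two coordinate charts $z^+_k$ and $z^-_{k+1}$ on which the relevant Laurent series converge geometrically. Integrating, and undoing the vertical rescaling $S$, one obtains, for a fixed small $t$ and every $m$, $\|f\circ\psi_k-f'\circ\psi'_k\|_{C^m}\le C_{m,t}\,\delta^{-k}$ on the graph pieces, with an analogous bound on the necks.

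To conclude, fix $j\in J$ and put $k=\ell N+j$. Periodicity gives $f\circ\psi_k=f\circ\psi_j+\ell\theperiod$, so
\[
	\big\|(f'\circ\psi'_k-\ell\theperiod)-f\circ\psi_j\big\|_{C^m}
	=\|f'\circ\psi'_k-f\circ\psi_k\|_{C^m}
	\le C_{m,t}\,\delta^{-(\ell N+j)},
\]
which tends to $0$ as $\ell\to\infty$, and likewise for the neck pieces. Hence each piece of $M'_\ell$ near $K$ converges in $C^m$ to the matching piece of $M$; since both surfaces are embedded and $M\cap K$ is a finite union of these smoothly parametrized pieces over which, by the first step, $M'_\ell$ carries no further sheets, this parametrized convergence yields $C^m$-convergence of the surfaces, and — $m$ being arbitrary — smooth convergence $M'_\ell\to M$ on $K$. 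The hard part is the $C^\infty$ upgrade of Propositions~\ref{prop:immersion-decay1}--\ref{prop:immersion-decay2}: one must run the interior Cauchy estimates while respecting that $\omega$ and $\omega'$ live on the distinct tori $T_k$ and $T'_k$, so that all comparisons are made on the fixed model torus $\T$ through the non-conformal — but uniformly bi-Lipschitz — charts $\psi_k$ and $\psi'_k$; once this and the embeddedness/height estimates of Section~\ref{ssec:embedded} are in hand, the bookkeeping of which indices $k\approx\ell N$ contribute near $K$, and the passage from parametrized to surface convergence, are routine.
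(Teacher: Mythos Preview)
Your argument is correct but differs from the paper's in one substantial respect: the treatment of the neck regions. On the graph pieces $\wt\Omega_k$ the two proofs essentially agree (the paper only states the $C^0$ bound and implicitly relies on elliptic regularity of the minimal surface equation to upgrade, whereas you do the upgrade by hand via Cauchy estimates). For the necks, however, the paper does \emph{not} carry out a direct Laurent-series comparison. Instead it observes that the neck $A'_{k,\ell}=M'_\ell\cap\{h_k^+<x_3<h_{k+1}^-\}$ is an unstable minimal annulus bounded by the convex curves $\gamma'^+_{k,\ell}$ and $\gamma'^-_{k+1,\ell}$, which by the graph-piece argument converge to $\gamma_k^+$ and $\gamma_{k+1}^-$; it then invokes a compactness theorem for unstable minimal annuli with convex boundary (Theorem~2(a) in \cite{traizet2010}) to get subsequential convergence, and the Meeks--White uniqueness theorem for such annuli to conclude that the full sequence converges to the corresponding neck of $M$.

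The trade-off is this: the paper's route is short and clean, but imports two nontrivial external results and gives no rate. Your route is self-contained and quantitative (it yields the explicit $O(\delta^{-\ell N})$ rate on the necks as well), at the cost of the bookkeeping you flag at the end --- comparing $\omega$ and $\omega'$ on the necks via the coordinates $z_k^+$ and $z'^+_k$, which live on different Riemann surfaces. That bookkeeping is genuine but manageable: in the neck chart $z=z_k^+$ one has $g_k=1/z$, so $d\wh f$ is expressed entirely through $\omega$, $z\omega$, $z^{-1}\omega$, and the termwise Laurent bounds $|c_{k,n}^\pm-c'^\pm_{k,n}|\le Cn\delta^{-k}(2\varepsilon)^{-n}$ from Proposition~\ref{prop:immersion-decay2} do promote to $C^m$ on the fixed annulus $t^2/\varepsilon<|z|<\varepsilon$.
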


\begin{proof}
	By periodicity we have
	\[
		\wt{\Omega}_{k+N}=\wt{\Omega}_k\quad\text{and}\quad
		f\circ\psi_{k+N}=f\circ\psi_k+\theperiod\quad\text{in $\wt{\Omega}_k$}.
	\]
	By Propositions \ref{prop:immersion-decay1} and \ref{prop:immersion-decay2},
	we have for $\ell\in\N$
	\[
		\|f\circ\psi_{k+\ell N}-f'\circ\psi'_{k+\ell N}\|_{C^0(\wt{\Omega}_k)}\leq\frac{C}{\delta^{k+\ell N}}.
	\]
	Define $f'_{\ell}=f-\ell\theperiod$. Then
	\[
		\|f\circ\psi_k-f'_{\ell}\circ\psi'_{k+\ell N}\|_{C^0(\wt{\Omega}_k)}\leq\frac{C}{\delta^{k+\ell N}}.
	\]
	Hence
	\begin{equation}
		\label{eq-limit-fl}
		\lim_{\ell\to\infty} f'_{\ell}\circ\psi'_{k+\ell N}(\wt{\Omega}_k)=f\circ\psi_k(\wt{\Omega}_k).
	\end{equation}
	Recall from Section \ref{ssec:embedded} that we have defined heights
	$h_k^{\pm}$ such that $M\cap\{h_k^-<x_3<h_k^+\}$ is included in
	$f\circ\psi_k(\wt{\Omega}_k)$ and is bounded by two convex curves denoted
	$\gamma_k^+$ and $\gamma_k^-$.  Then for $\ell$ large enough,
	$M'_{\ell}\cap\{h_k^-<x_3<h_k^+\}$ is included in
	$f'_{\ell}\circ\psi'_{k+\ell N}(\wt{\Omega}_k)$ and is bounded by two convex
	curves denoted $\gamma'^+_{k,\ell}$ and $\gamma'^-_{k,\ell}$.  By
	\eqref{eq-limit-fl} we have
	\[
		\lim_{\ell\to\infty} M'_{\ell}\cap\{h_k^-<x_3<h_k^+\}=M\cap\{h_k^-<x_3<h_k^+\}.
	\]
	Let $A'_{k,\ell}=M'_{\ell}\cap\{h_k^+<x_3<h_{k+1}^-\}$.  Then $A'_{k,\ell}$
	is an unstable minimal annulus bounded by $\gamma'^+_{k,\ell}$ and
	$\gamma'^-_{k+1,\ell}$.  By Theorem 2(a) in \cite{traizet2010}, $A'_{k,\ell}$
	converges subsequentially as $\ell\to\infty$ to an unstable annulus bounded
	by $\gamma_k^+$ and $\gamma_{k+1}^-$. By \cite{meeks-white}, this annulus is
	unique so the whole sequence converges and
	\[
		\lim_{\ell\to\infty} A'_{k,\ell}=M\cap\{h_k^+<x_3<h_{k+1}^-\}.
	\]
	Hence for any integers $k_1<k_2$, we have
	\[
		\lim_{\ell\to\infty} M'_{\ell}\cap\{h_{k_1}^+<x_3<h_{k_2}^+\}=M\cap\{h_{k_1}^+<x_3<h_{k_2}^+\}
	\]
	which proves Proposition \ref{prop:TPMS-decay} since $\lim_{k\to\pm\infty}h_k^+=\pm\infty$.
\end{proof}

\section{The holomorphic 1-forms \texorpdfstring{$\omega$}{[omega]}} \label{sec:omega}

The goal of this section is to prove:
\begin{proposition} \label{prop:omega}
	\leavevmode
	\begin{enumerate}
		\item For $(t,\x)$ in a neighborhood of $(0,\cv{\x})$, there exists a
			holomorphic (regular if $t=0$) 1-form $\omega[t,\x]$ on $\Sigma[t,\x]$
			with imaginary periods on $\alpha_k$ and $\beta_k$ for all $k\in\Z$ and
			$\int_{\gamma}\omega=2\pi\ii$.

		\item At $t=0$, we have for all $k\in\Z$:
			\[
				\omega[0,\x]=\left(\zeta(z;\tau_k)-\zeta(z-v_k;\tau_k)-\xi(v_k;\tau_k)\right)dz\quad\text{in $T_k$}.
			\]

		\item The pullback $\psi^*\omega$ is in $C^0(\wt{\Omega})$ and depends
			smoothly on $(t,\x)$ in a neighborhood of $(0,\cv{\x})$.
	\end{enumerate}
\end{proposition}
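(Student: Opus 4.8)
The plan is to exhibit $\omega$ explicitly at $t=0$ and to obtain it for $t\neq0$ by perturbation, running the Implicit Function Theorem with the $\overline{\partial}$-equation on $\Sigma[t,\x]$ and the period functionals as the equations, in the spirit of the gluing of infinitely many nodes in \cite{morabito2012, traizet2013}.

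First I would dispose of the case $t=0$. On each torus $T_k$ one puts $\omega^{\sharp}_k:=\bigl(\zeta(z;\tau_k)-\zeta(z-v_k;\tau_k)-\xi(v_k;\tau_k)\bigr)dz$. Since $\zeta(\cdot\,;\tau_k)$ has a simple pole of residue $1$ at the origin, $\omega^{\sharp}_k$ has a simple pole of residue $1$ at $0_k$ and one of residue $-1$ at $v_k$; as $v_k\in T_k$ and $0_{k+1}\in T_{k+1}$ are identified to form the $k$-th node, the two branches carry opposite residues there, so the $\omega^{\sharp}_k$ assemble into a regular $1$-form $\omega[0,\x]$ on $\Sigma[0,\x]$, which is exactly \eqref{eq:omega0}; this proves (b). Its period over $\gamma=[\partial D_1^-]$ is $2\pi\ii\,\Res_{0_1}\omega[0,\x]=2\pi\ii$. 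For the $\alpha_k$- and $\beta_k$-periods, the point is that $z\mapsto\zeta(z;\tau_k)-\zeta(z-v_k;\tau_k)$ is $(\Z+\tau_k\Z)$-periodic (the quasi-periods $\eta_i(\tau_k)$ cancel), hence a genuine elliptic function, and a short computation with the quasi-periodicity of $\zeta$ shows its $\alpha_k$- and $\beta_k$-periods are $\eta_1(\tau_k)v_k$ and $\eta_2(\tau_k)v_k$; subtracting $\xi(v_k;\tau_k)\,dz$, whose periods are $\xi(v_k;\tau_k)$ and $\tau_k\xi(v_k;\tau_k)$, and writing $v_k=x_k+y_k\tau_k$ with $x_k,y_k\in\R$, the periods of $\omega^{\sharp}_k$ become $y_k\bigl(\tau_k\eta_1(\tau_k)-\eta_2(\tau_k)\bigr)$ and $-x_k\bigl(\tau_k\eta_1(\tau_k)-\eta_2(\tau_k)\bigr)$, which are purely imaginary by the Legendre relation $\tau_k\eta_1(\tau_k)-\eta_2(\tau_k)\in\ii\R$. (These are the computations of \cite{traizet2008}.) Finally $\psi^*\omega[0,\x]\in C^0(\wt{\Omega})$ because the poles of $\omega^{\sharp}_k$ lie outside $\psi_k(\wt{\Omega}_k)$. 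This settles (a), (b) and the $t=0$ part of (c).

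For $t\neq0$ the plan is to run the Implicit Function Theorem on the fixed domain $\wt{\Omega}$. The unknown is the $1$-form $\omega$ on $\Sigma[t,\x]$, encoded as $\psi^*\omega\in C^0(\wt{\Omega})$ (its Laurent tails on the necks then being determined by analytic continuation subject to the gluing $z_k^+z_{k+1}^-=t^2$); the equations are that $\omega$ be holomorphic on $\Sigma[t,\x]$, that $\Re\int_{\alpha_k}\omega=\Re\int_{\beta_k}\omega=0$ for all $k$, and that $\int_{\gamma}\omega=2\pi\ii$. At $(t,\x)=(0,\cv{\x})$ the form $\omega^{\sharp}$ solves this, and the linearization in $\omega$ at this point is an isomorphism between the relevant Banach spaces: $\Sigma[0,\cv{\x}]$ is the disjoint union of the $T_k$ with nodes, so the $\overline{\partial}$-equation and the period conditions decouple torus by torus; on each $T_k$ the model affine space of meromorphic $1$-forms with simple poles of residues $\pm1$ at $0_k,v_k$ is parametrized by one complex number (the $\alpha_k$-period), on which the two real period conditions have a unique solution because $\Im\cv{\tau}_k=\Im\tau\neq0$; and the inverses of these blocks are bounded uniformly in $k$ since Hypothesis~\ref{hyp:separate} keeps the neck positions $\cv{v}_k=q_k$, hence the geometry of the $\Omega_k$, uniformly controlled and $\cv{\tau}_k$ takes only two values, so the bounds persist for $(t,\x)$ near $(0,\cv{\x})$. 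The Implicit Function Theorem then produces $\omega=\omega[t,\x]$, holomorphic with the prescribed periods, unique near $\omega^{\sharp}$ and smooth in $(t,\x)$ with values in $C^0(\wt{\Omega})$ after pullback by $\psi$; this gives (a) and (c). Since $\omega^{\sharp}$ solves the equations at $t=0$ for every $\x$ near $\cv{\x}$, uniqueness in the Implicit Function Theorem yields $\omega[0,\x]=\omega^{\sharp}$ in general, completing (b).

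The hard part will be the uniform invertibility of that linearized operator, i.e.\ solving the $\overline{\partial}$-equation with prescribed periods on $\Sigma[t,\x]$ with bounds that neither deteriorate as $|k|\to\infty$ nor as $\x$ ranges over an $\ellinf$-neighbourhood of $\cv{\x}$. This forces one to work on the necks with Laurent expansions, where the gluing relation produces the geometric factors $t^{2n}$ that make the tails summable and couple consecutive tori only weakly, the uniform separation hypothesis being exactly what keeps all the local pieces uniformly comparable. I would carry this out following Appendix~A of \cite{morabito2012} and the weighted-space argument behind Corollary~2 of \cite{traizet2013}, which apply here with only notational changes.
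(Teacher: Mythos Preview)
Your treatment of $t=0$ is correct and is essentially the paper's own computation (Lemma~\ref{omega-lemma1}): the explicit differential $\omega^\sharp_k$ has the right residues at $0_k$ and $v_k$, and the Legendre relation makes its $\alpha_k$- and $\beta_k$-periods purely imaginary.

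For $t\neq 0$ there is a genuine gap. You propose to run the Implicit Function Theorem with ``$\overline\partial\omega=0$ plus period conditions'' as the map and $\psi^*\omega\in C^0(\wt\Omega)$ as the unknown, but this setup is not coherent as stated: if the unknown ranges over general $C^0$ forms, the linearized $\overline\partial$ is nowhere near an isomorphism between the spaces you name, whereas the one-complex-parameter-per-torus model you invoke at $t=0$ already lives in the kernel of $\overline\partial$, not in the domain of the full map. You then defer the hard step to Appendix~A of \cite{morabito2012} and Corollary~2 of \cite{traizet2013}, but neither reference solves a $\overline\partial$-equation on the glued surface; both use the Laurent-coefficient scheme, which is exactly what the present paper does.

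Concretely, the paper builds holomorphicity and the imaginary $\alpha_k,\beta_k$-periods into the ansatz: on each $T_k$ one writes
\[
  \wt\omega(\x,\bm\lambda)=\omega_{k,0_k,v_k}+\sum_{n\ge 2}\rho^{n-1}\bigl(\lambda_{k,n}^+\omega_{k,n}^+ +\lambda_{k,n}^-\omega_{k,n}^-\bigr),
\]
where the $\omega_{k,n}^{\pm}$ are the real-normalized abelian differentials of the second kind with a single pole of order $n$ at $v_k$ or $0_k$, and $\bm\lambda\in\ell^\infty$ is the unknown. The only remaining condition is that $\wt\omega$ match across each neck under $z_k^+z_{k+1}^-=t^2$; equating Laurent coefficients on $\partial D_k^\pm$ turns this into an affine fixed-point equation $\bm\lambda=\bm L(t,\x,\bm\lambda)$ in $\ell^\infty$, and the uniform bound $\|\omega_{k,n}^\pm\|_{C^0(\Omega_k)}\le C(2/\varepsilon)^{n-1}$ (Lemma~\ref{omega-lemma2}) gives $\bm L$ Lipschitz constant $O(t^2)$. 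Existence is thus by the Banach Fixed Point Theorem, not the Implicit Function Theorem; smooth dependence on $(t,\x)$ then follows from smoothness of $\bm L$ (Lemma~\ref{omega-lemma6}). This mechanism makes the neck-coupling visibly of size $t^2$ and delivers the uniform-in-$k$ control you correctly flag as the crux, without ever touching a $\overline\partial$-operator.
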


\begin{remark}
	If the configuration is periodic with period $2N$ (namely, $q_{k+2N}=q_k$),
	the quotient of $\Sigma$ by its period is a compact Riemann surface, obtained
	by opening $2N$ nodes between $2N$ tori.  In this case, the existence of
	$\omega$ follows from the standard theory of Opening Nodes \cite{fay1973}. To
	prove the existence of $\omega$ in the non-periodic case, we adapt the
	argument in \cite{traizet2013} which allows for infinitely many nodes. The
	difference is that Riemann spheres are replaced by tori.
\end{remark}

In the following, we use the same letter $C$ to designate all uniform
constants.

\subsection{Preliminaries}

\begin{definition} \label{def-differentials}
	\leavevmode
	\begin{enumerate}
		\item For $p,q\in T_k$, $p\neq q$, we denote by $\omega_{k,p,q}$ the unique
			meromorphic 1-form on $T_k$ with simple poles at $p,q$ with residues $1$
			and $-1$, and imaginary periods on $\alpha_k$ and $\beta_k$.  So
			$\omega_{k,p,q}$ is an abelian differential of the third kind with real
			normalisation.

		\item For $n\geq 2$, we denote $\omega_{k,n}^+$ (resp. $\omega_{k,n}^-$)
			the unique meromorphic 1-form on $T_k$ with a pole of multiplicity $n$ at
			$v_k$ (resp. $0_k$) with principal part
			\[
				\frac{dz_k^{\pm}}{(z_k^{\pm})^n}
			\]
			and imaginary periods on $\alpha_k$ and $\beta_k$.  So $\omega_{k,n}^\pm$
			are abelian differentials of the second kind with real normalisation.
			Recall that it depends on the choice of the local coordinate $z_k^{\pm}$
			used to define the principal part.
	\end{enumerate}
\end{definition}

\begin{lemma} \label{omega-lemma1}
	The abelian differential $\omega_{k,p,q}$ is explicitely given by
	\begin{equation} \label{eq-etakpq}
		\omega_{k,p,q}=\left(\zeta(z-p;\tau_k)-\zeta(z-q;\tau_k)-\xi(q-p;\tau_k)\right)dz.
	\end{equation}
\end{lemma}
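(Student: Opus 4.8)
The plan is to verify directly that the explicit 1-form on the right-hand side of \eqref{eq-etakpq} has the three defining properties of $\omega_{k,p,q}$: the correct poles and residues, holomorphicity elsewhere, and imaginary periods on $\alpha_k$ and $\beta_k$; uniqueness then forces equality. First I would check the pole structure. Near $z=p$, the term $\zeta(z-p;\tau_k)$ contributes a simple pole with residue $1$ (since $\zeta(w;\tau_k)\sim 1/w$ near $w=0$), while $\zeta(z-q;\tau_k)$ and the constant $\xi(q-p;\tau_k)$ are holomorphic there; symmetrically, near $z=q$ we get a simple pole with residue $-1$. Since $\zeta(\cdot;\tau_k)$ is holomorphic away from the lattice $\Z+\tau_k\Z$, the only singularities in $T_k$ are at $p$ and $q$, as required.

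Next I would check ellipticity, i.e.\ that the expression in parentheses is genuinely $\Gamma$-periodic in $z$ (so that it descends to $T_k$) — this is where the correction term $\xi(q-p;\tau_k)$ and the Legendre-type relations enter. Using the quasi-periodicity $\zeta(z+2\omega_i;\tau_k)=\zeta(z;\tau_k)+\eta_i(\tau_k)$, the difference $\zeta(z-p;\tau_k)-\zeta(z-q;\tau_k)$ picks up $\eta_i(\tau_k)-\eta_i(\tau_k)=0$ under $z\mapsto z+2\omega_i$; hence the parenthetical expression is in fact already $\Gamma$-periodic regardless of the constant, and the constant $-\xi(q-p;\tau_k)$ is fixed instead by the period normalization. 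So the real content is the period computation: I would compute $\int_{\alpha_k}$ and $\int_{\beta_k}$ of the form and show the constant $\xi(q-p;\tau_k)$ is exactly what makes both periods purely imaginary. Writing $q-p=x\cdot 1+y\cdot\tau_k$ in the $(1,\tau_k)$-basis, one has $\int_{\alpha_k}\big(\zeta(z-p)-\zeta(z-q)\big)dz$ and $\int_{\beta_k}\big(\zeta(z-p)-\zeta(z-q)\big)dz$ evaluating (by the standard computation with $\zeta$, or equivalently by the argument/residue relation for $\zeta$) to expressions of the form $2\pi\ii\,(\text{integer or half-integer})+ (\text{real combination of }\eta_1,\eta_2,\omega_1,\omega_2)$; subtracting $\xi(q-p;\tau_k)=x\,\eta_1+y\,\eta_2$ times the period cancels precisely the real part, using the Legendre relation $\eta_1\omega_2-\eta_2\omega_1=\pi\ii/2$. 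The upshot is that both periods land in $\ii\R$.

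Finally, uniqueness: any two 1-forms on $T_k$ with the same poles, same principal parts, and imaginary periods on $\alpha_k,\beta_k$ differ by a holomorphic 1-form with imaginary periods; but on a torus a holomorphic 1-form is a constant multiple of $dz$, and $c\,dz$ has periods $c$ and $c\tau_k$, which are both imaginary only if $c=0$ (as $\Im\tau_k\neq 0$). Hence the form is unique and equals the displayed expression.

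The main obstacle is the period computation: keeping careful track of the branch/constant of integration in $\int\zeta(z-p;\tau_k)\,dz=\log\sigma(z-p;\tau_k)$ along the cycles, and correctly bookkeeping the contributions of $\eta_1,\eta_2$ together with the Legendre relation so that the real parts cancel exactly. Everything else is routine once the normalizing constant $\xi(q-p;\tau_k)$ is pinned down, and this is precisely the definition of $\xi$ introduced earlier in the paper, so the computation should go through cleanly.
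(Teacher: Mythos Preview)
Your approach matches the paper's: verify poles and residues, compute the periods on $\alpha_k$ and $\beta_k$, and invoke uniqueness (which you spell out more fully than the paper does). The one place you stay vague is the period integral, and your suggested route via an antiderivative $\log\sigma(z-p;\tau_k)$ does bring in exactly the branch-tracking you flag as the main obstacle. The paper sidesteps this entirely by differentiating under the integral sign with respect to the parameter $q$:
\[
\frac{\partial}{\partial q}\int_{\alpha_k}\big(\zeta(z-p)-\zeta(z-q)\big)\,dz
=\int_{\alpha_k}\frac{d}{dz}\zeta(z-q)\,dz
=\big[\zeta(z-q)\big]_{\alpha_k}=\eta_1,
\]
whence $\int_{\alpha_k}(\zeta(z-p)-\zeta(z-q))\,dz=(q-p)\eta_1$ exactly (the integral vanishes at $q=p$), and similarly $(q-p)\eta_2$ on $\beta_k$. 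Subtracting $\xi(q-p)\cdot 1$ and $\xi(q-p)\cdot\tau_k$ and using the Legendre relation in the paper's normalization $\eta_1\tau_k-\eta_2=2\pi\ii$ then gives the periods as $2\pi\ii\,y$ and $-2\pi\ii\,x$, purely imaginary, with no branch bookkeeping at all. Your description of the integrals as ``$2\pi\ii$ times an integer plus a real combination of $\eta_i,\omega_i$'' is not quite what actually falls out; the differentiation trick gives the exact answer directly.
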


\begin{proof}
	Let $\omega'_{k,p,q}$ be the right-hand side of \eqref{eq-etakpq}.  Then
	$\omega'_{k,p,q}$ has simple poles at $p,q$ with residues $1$ and $-1$.
	Using the quasi-periodicity of $\zeta$, $\omega'_{k,p,q}$ is independent of
	the choice of the representatives of $p$ and $q$ modulo $\Z+\tau_k\Z$.  We
	take these representatives in the fundamental parallelogram spanned by $1$
	and $\tau_k$.  We may represent $\alpha_k$ and $\beta_k$ by curves which do
	not intersect the segment $[p,q]$.  Using the quasi-periodicity of $\zeta$
	(and omitting the argument $\tau_k$ everywhere)
	\[
		\frac{\partial}{\partial q}\int_{\alpha_k}(\zeta(z-p)-\zeta(z-q))dz=
		\int_{\alpha_k}\frac{d\zeta}{dz}(z-q)dz=\left[\zeta(z-q)\right]_{\alpha_k(0)}^{\alpha_k(1)}=\eta_1.
	\]
	Hence
	\[
		\int_{\alpha_k}(\zeta(z-p)-\zeta(z-q))dz=(q-p)\eta_1.
	\]
	Write $q-p=x + y \tau_k$ and recall the definition of $\xi$ and Legendre
	Relation $\eta_1\tau_k-\eta_2=2\pi\ii$, where we assumed that $\Im\tau > 0$.
	Then
	\[
		\int_{\alpha_k}\omega'_{k,p,q}=(x+y\tau_k)\eta_1-(x\eta_1+y\eta_2)=y(\eta_1\tau_k-\eta_2)=2\pi\ii y.
	\]
	In the same way,
	\[
		\int_{\beta_k}\omega'_{k,p,q}=(x+y\tau_k)\eta_2-(x\eta_1+y\eta_2)\tau_k=x(\eta_2-\eta_1\tau_k)=-2\pi\ii x.
	\]
	Hence $\omega'_{k,p,q}$ has imaginary periods so is equal to $\omega_{k,p,q}$
	by uniqueness.
\end{proof}

\begin{lemma} \label{omega-lemma2}
	There exists a uniform constant $C$ such that for $(t,\x)$ in a neighborhood of
	$(0,\cv{\x})$ and $k\in\Z$:
	\[
		\|\omega_{k,n}^{\pm}\|_{C^0(\Omega_k)}\leq C\left(\frac{2}{\varepsilon}\right)^{n-1}.
	\]
\end{lemma}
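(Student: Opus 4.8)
The plan is to write $\omega_{k,n}^{+}$ down explicitly and estimate it term by term; the bound for $\omega_{k,n}^{-}$ will then follow verbatim after interchanging the roles of $v_k$ and $0_k$. The starting point is that, since $\wp(s;\tau)=s^{-2}+\sum_{j\ge 1}(2j+1)G_{2j+2}(\tau)s^{2j}$ has a single negative power, the elliptic function $\frac{(-1)^{\ell}}{(\ell-1)!}\wp^{(\ell-2)}(s;\tau)$ has principal part exactly $s^{-\ell}$ at $s=0$. Expanding the prescribed principal part $\frac{dz_k^{+}}{(z_k^{+})^{n}}$ in the local coordinate $s=z-v_k$ via the Taylor expansion $z_k^{+}=c_1 s+c_2 s^{2}+\cdots$ of $z_k^{+}=1/g_k$ at $v_k$ (so $c_1=(z_k^{+})'(v_k)=1/\Res_{v_k}g_k=-1/a_k$), one reads off Laurent coefficients $\mu_{n,\ell}$ for $2\le\ell\le n$, with no residue term because $\frac{dz_k^{+}}{(z_k^{+})^{n}}$ is exact. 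Since a holomorphic $1$-form on $T_k$ with imaginary periods vanishes (as $\tau_k\notin\R$), this yields
\[
\omega_{k,n}^{+}=\sum_{\ell=2}^{n}\mu_{n,\ell}\,\frac{(-1)^{\ell}}{(\ell-1)!}\,\wp^{(\ell-2)}(z-v_k;\tau_k)\,dz+c\,dz ,
\]
where $c\,dz$ is fixed by the period normalisation: $\wp^{(\ell-2)}(z-v_k)\,dz$ is exact for $\ell\ge 3$ while $\wp(z-v_k)\,dz=-d\zeta(z-v_k)$ has periods $(-\eta_1(\tau_k),-\eta_2(\tau_k))$, so $c$ solves a $2\times 2$ real system with uniformly invertible matrix ($\Im\tau_k$ bounded below), whence $|c|\le C|\mu_{n,2}|$. (As a check on the principal parts, the same argument gives the tidy identity $\omega_{k,n}^{+}+\omega_{k,n}^{-}=-g_k^{\,n-2}g_k'\,dz=d\!\left(-\frac{1}{n-1}g_k^{\,n-1}\right)$, which already bounds the sum nicely via the maximum principle since $|g_k|\equiv\varepsilon^{-1}$ on $\partial\Omega_k$; but to separate the two one needs the expansion above.)

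Next I would estimate the two ingredients uniformly in $k$, which is where Hypothesis~\ref{hyp:separate} and the closeness of $\x$ to $\cv{\x}$ enter. The Taylor coefficients $c_j$ are uniformly bounded and $c_1$ is uniformly bounded away from $0$, so Cauchy's estimates on a fixed small disk $|s|<\rho_0$ give $|\mu_{n,\ell}|\le C\,|c_1|^{\,1-n}(1+\delta')^{\,n}\rho_0^{\,\ell-n}$ with $\delta'$ as small as desired after shrinking $\rho_0$; the decisive feature is $|c_1|=|1/a_k|\to 2$ as $\x\to\cv{\x}$, because $\cv{a}_k=-\frac12$. For the Weierstrass part, the Eisenstein coefficients $G_{2j+2}(\tau_k)$ are bounded uniformly in $j$ and $k$ (normalise the lattice so its shortest vector has uniform length), so termwise differentiation bounds $\bigl\|\frac{(-1)^{\ell}}{(\ell-1)!}\wp^{(\ell-2)}(\cdot-v_k;\tau_k)\bigr\|_{C^{0}(\Omega_k)}$ by $d_0^{\,-\ell}+C^{\ell}$, where $d_0:=\min_{\Omega_k}|z-v_k|$ is attained on $\partial D_k^{+}$; and since $\partial D_k^{+}=\{\,|1/g_k|=\varepsilon\,\}$ and $g_k(z)=-a_k/(z-v_k)+O(1)$ near $v_k$, one gets $d_0=|a_k|\varepsilon(1+O(\varepsilon))$, hence $|c_1|d_0=\varepsilon(1+O(\varepsilon))$.

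Assembling, $\|\omega_{k,n}^{+}\|_{C^{0}(\Omega_k)}\le\sum_{\ell=2}^{n}|\mu_{n,\ell}|(d_0^{\,-\ell}+C^{\ell})+|c|$, and each piece is a geometric-type sum bounded by $C_0 B^{\,n}$ for a fixed base $B$. Examining the dominant ($\ell=n$) term, $|\mu_{n,n}|d_0^{\,-n}=|c_1|^{\,1-n}d_0^{\,-n}=|c_1|(|c_1|d_0)^{-n}=|c_1|\,\varepsilon^{-n}(1+O(\varepsilon))^{-n}$, shows $B=\varepsilon^{-1}\bigl(1+O(\varepsilon,|\x-\cv{\x}|)\bigr)$; after shrinking $\varepsilon$ and the neighbourhood of $\cv{\x}$ this gives $B<2/\varepsilon$, so $C_0 B^{\,n}\le C(2/\varepsilon)^{n-1}$ with $C$ independent of $n$ and $k$. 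I expect the real work to be exactly this bookkeeping — checking that every constant is uniform in $k$ (via Hypothesis~\ref{hyp:separate}) and that the effective base stays below $2/\varepsilon$ — rather than any single hard estimate; in fact there is a comfortable margin, the true base being of order $\varepsilon^{-1}$, and the factor $2$ in the statement is essentially just the limiting value $|dz_k^{\pm}/dz|=|1/a_k|\to 2$ at the pole. The bound for $\omega_{k,n}^{-}$ is identical, now with $z_k^{-}=1/g_k$ near $0_k$ and $(z_k^{-})'(0_k)=1/a_k$, again of modulus $\to 2$.
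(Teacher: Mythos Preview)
Your approach is correct but genuinely different from the paper's. You write $\omega_{k,n}^{+}$ explicitly as a finite linear combination of $\wp^{(\ell-2)}(z-v_k;\tau_k)\,dz$ plus a constant multiple of $dz$, then bound each piece: Cauchy estimates control the Laurent coefficients $\mu_{n,\ell}$ of the prescribed principal part, and standard Weierstrass--function bounds control the $\wp$-derivatives on $\Omega_k$. The bookkeeping you flag (uniformity in $k$, and the effective base of the geometric sum staying below $2/\varepsilon$) is indeed the only real work, and it goes through as you indicate; the identity $\omega_{k,n}^{+}+\omega_{k,n}^{-}=d\!\left(-\tfrac{1}{n-1}g_k^{\,n-1}\right)$ is a pleasant sanity check.

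The paper instead avoids all explicit series. It first passes to the holomorphically normalised differential $\eta_{k,n}^{+}$ (with $\int_{\alpha_k}\eta_{k,n}^{+}=0$), related to $\omega_{k,n}^{+}$ by adding a multiple of $dz$ determined by $\Im\tau_k$. Writing $\eta_{k,n}^{+}=f_{k,n}^{+}\,dz$, the Residue Theorem applied to $f_{k,n}^{+}\omega_{k,p,q}$ on $T_k\setminus D_k^{+}$ yields the Cauchy-type formula
\[
f_{k,n}^{+}(p)-f_{k,n}^{+}(q)=\frac{-1}{2\pi\ii}\int_{\partial D_k^{+}}\frac{dz_k^{+}}{dz}\,(z_k^{+})^{-n}\,\omega_{k,p,q},
\]
and integrating $q$ over $\alpha_k$ gives $f_{k,n}^{+}(p)$ itself as a single contour integral. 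Shifting the contour to $|z_k^{+}|=\varepsilon/2$ and using the uniform bound on $\omega_{k,p,q}$ from its explicit Hecke-form expression then produces $(2/\varepsilon)^{n-1}$ directly, with no summation over $\ell$. This is shorter and more conceptual---it is really a reproducing-kernel argument that would transplant to higher genus---whereas your route is more hands-on and makes the origin of the base $2/\varepsilon$ (namely $|1/\cv a_k|=2$ times the radius $\varepsilon^{-1}$ scaling) fully transparent.
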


\begin{proof}[Proof]
	We only prove the $+$ case.  The $-$ case follows similarly.

	Let $\eta_{k,n}^+$ be the unique meromorphic 1-form on $T_k$ with a pole at
	$v_k$ with the same principal part as $\omega_{k,n}^+$ and normalized by
	$\int_{\alpha_k}\eta_{k,n}^+=0$. These two differentials are related by
	\begin{equation}
		\label{omega-eta}
		\omega_{k,n}^+=\eta_{k,n}^+ +\frac{\ii}{\Im(\tau_k)}\left(\Re\int_{\beta_k}\eta_{k,n}^+\right)dz.
	\end{equation}
	Indeed, the right-hand side has imaginary periods.

	Write $\eta_{k,n}^+=f_{k,n}^+dz.$ For $p,q$ in $T_k\setminus D_k^+$, we have
	by the Residue Theorem in $T_k\setminus D_k^+$:
	\[
		\int_{\partial D_k^+}f_{k,n}^+\omega_{k,p,q}=-2\pi\ii\left(f_{k,n}^+(p)-f_{k,n}^+(q)\right).
	\]
	On the other hand, since $\omega_{k,p,q}$ is holomorphic in $D_k^+$ and by
	definition of the principal part of $f_{k,n}^+$ at $v_k$:
	\[
		\int_{\partial D_k^+} f_{k,n}^+\omega_{k,p,q}
		=\int_{\partial D_k^+}\frac{dz_k^+}{dz}(z_k^+)^{-n}\omega_{k,p,q}.
	\]
	Hence
	\[
		f_{k,n}^+(p)-f_{k,n}^+(q)=-\chi_{k,n}^+(p,q)\quad\text{with}\quad
		\chi_{k,n}^{+}(p,q)=\frac{1}{2\pi\ii}\int_{\partial D_k^{+}}\frac{dz_k^{+}}{dz}(z_k^{+})^{-n}\omega_{k,p,q}.
	\]
	Integrating with respect to $q$ on $\alpha_k$, we obtain the following
	integral representation of $f_{k,n}^+(p)$ for $p\in T_k\setminus D_k^+$:
	\begin{equation}
		\label{integral-representation}
		f_{k,n}^+(p)=\int_{\alpha_k}(f_{k,n}^+(p)-f_{k,n}^+(q))dq=-\int_{\alpha_k}\chi_{k,n}^+(p,q)dq.
	\end{equation}
	By Cauchy Theorem, we may replace the circle $\partial D_k^+$ by the circle
	$|z_k^+|=\frac{\varepsilon}{2}$ in the definition of $\chi_{k,n}^+$.  Using
	Lemma \ref{omega-lemma1}, there exists a uniform constant $C$ (independent of
	$k\in\Z$ and $\x$ in a neighborhood of $\cv{\x}$) such that for all $p,q\in
	\Omega_k$ and $z$ on the circle $|z_k^+|=\frac{\varepsilon}{2}$:
	\[
		|\omega_{k,p,q}(z)|\leq C.
	\]
	Then for $p,q\in\Omega_k$:
	\[
		|\chi_{k,n}^+(p,q)|\leq \frac{C}{2\pi}\int_{|z_k^+|=\varepsilon/2}\frac{|dz_k^+|}{|z_k^+|^n}
		=\frac{C}{2\pi}\left(\frac{2}{\varepsilon}\right)^n 2\pi\frac{\varepsilon}{2}
		=C\left(\frac{2}{\varepsilon}\right)^{n-1}.
	\]
	By \eqref{integral-representation},
	\begin{equation}
		\label{estimate-eta}
		\|\eta_{k,n}^+\|_{C^0(\Omega_k)}\leq C\left(\frac{2}{\varepsilon}\right)^{n-1}.
	\end{equation}
	Lemma \ref{omega-lemma2} then follows from \eqref{omega-eta}.
\end{proof}

\subsection{Existence}

Let $\Lambda$ denotes the set $\Z\times\{n\in\N:n\geq 2\}\times\{+,-\}$.  We
look for $\omega$ in the form $\omega[t,\x]=\wt{\omega}(\x,\bm\lambda(t,\x))$
where
\begin{equation} \label{eq-omega-series}
	\wt{\omega}(\x,\bm\lambda)=\omega_{k,0_k,v_k}+\sum_{n=2}^{\infty}\rho^{n-1}\left(\lambda_{k,n}^+\omega_{k,n}^+
	+\lambda_{k,n}^-\omega_{k,n}^-\right)\quad\text{in $T_k$},
\end{equation}
where $\rho\leq \frac{\varepsilon}{4}$ is a fixed positive number, and
$\bm\lambda=(\lambda_{k,n}^s)_{(k,n,s)\in\Lambda}\in\ellinf$ is a sequence of
complex numbers to be determined as a function of $(t,\x)$.  Observe that,
formally, $\wt{\omega}(\x,\bm\lambda)$ has the desired periods.  Regarding
convergence, by Lemma \ref{omega-lemma2}, we have in $\Omega_k$:
\begin{equation}
	\label{omega-estimate1}
	\sum_{n=2}^{\infty}\rho^{n-1}\left|(\lambda_{k,n}^+\omega_{k,n}^+
	+\lambda_{k,n}^-\omega_{k,n}^-)\right|
	\leq 2C\|\bm\lambda\|_{\infty}\sum_{n=2}^{\infty}\left(\frac{2\rho}{\varepsilon}\right)^{n-1}
	\leq 2C\|\bm\lambda\|_{\infty}.
\end{equation}
so the series \eqref{eq-omega-series} converges
absolutely in $\Omega_k$ and by Lemma \ref{omega-lemma1},
\begin{equation} \label{omega-estimate2}
	\|\wt{\omega}(\x,\bm\lambda)\|_{C^0(\Omega_k)}\leq C(1+\|\bm\lambda\|_{\infty}).
\end{equation}
We assume for now the convergence outside $\Omega_k$.

\begin{lemma} \label{omega-lemma3}
	Assume that the series \eqref{eq-omega-series} converges in the annulus
	$A_k^{\pm}$ for all $k\in\Z$. For $t\neq 0$, $\wt{\omega}(\x,\bm\lambda)$ is
	a well-defined 1-form on $\Sigma[t,\x]$ if and only if for all $k\in\Z$ and
	$n\geq 2$,
	\begin{align*}
		\lambda_{k,n}^+&=\frac{-1}{2\pi\ii}\int_{\partial D_{k+1}^-}\left(\frac{t^2}{\rho \,z_{k+1}^-}\right)^{n-1}\wt{\omega}(\x,\bm\lambda)
		\quad\text{and}\quad\\
		\lambda_{k,n}^-&=\frac{-1}{2\pi\ii}\int_{\partial D_{k-1}^+}\left(\frac{t^2}{\rho \,z_{k-1}^+}\right)^{n-1}\wt{\omega}(\x,\bm\lambda).
	\end{align*}
\end{lemma}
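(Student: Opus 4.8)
The plan is to recognise the lemma as the explicit form of the gluing (``matching'') condition at the necks, read off from Laurent expansions. First I would note that, for $t\neq0$, the surface $\Sigma[t,\x]$ is obtained by gluing the truncated tori $\wh{T}_k:=T_k\setminus\big(\{|z_k^+|\le t^2/\varepsilon\}\cup\{|z_k^-|\le t^2/\varepsilon\}\big)$ along the overlaps $A_k^+\subset\wh{T}_k$ and $A_{k+1}^-\subset\wh{T}_{k+1}$, identified by $z_k^+z_{k+1}^-=t^2$. Each summand of \eqref{eq-omega-series} has its poles at $0_k$ or $v_k$, hence inside the removed disks; the series converges on $\Omega_k$ by \eqref{omega-estimate1} and on $A_k^\pm$ by hypothesis, so $\wt{\omega}(\x,\bm\lambda)$ restricts to a holomorphic $1$-form on each $\wh{T}_k$. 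Therefore $\wt{\omega}(\x,\bm\lambda)$ is a well-defined $1$-form on $\Sigma[t,\x]$ \emph{if and only if}, for every $k\in\Z$, its restriction to $A_k^+$, transported by $z_k^+=t^2/z_{k+1}^-$, agrees with its restriction to $A_{k+1}^-$.

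The next step is Laurent coefficient bookkeeping. In the coordinate $z=z_k^+$, the form $\wt{\omega}$ has singular part $-z^{-1}\,dz+\sum_{n\ge2}\rho^{n-1}\lambda_{k,n}^+z^{-n}\,dz$ near $v_k$ and is otherwise holomorphic in $\{|z|<\varepsilon\}$: indeed $\omega_{k,0_k,v_k}$ contributes a simple pole of residue $-1$ at $v_k$, each $\omega_{k,n}^+$ contributes exactly its prescribed principal part $dz/z^n$, and each $\omega_{k,n}^-$ is holomorphic at $v_k$. Hence, writing $\wt{\omega}|_{A_k^+}=\sum_{m\in\Z}c_{k,m}^+z^{m-1}\,dz$, we have $c_{k,0}^+=-1$, $c_{k,1-n}^+=\rho^{n-1}\lambda_{k,n}^+$ for $n\ge2$, while for $m\ge1$ the coefficient $c_{k,m}^+=\frac{1}{2\pi\ii}\int_{\partial D_k^+}(z_k^+)^{-m}\wt{\omega}$ is some explicit function of $\x$ and $\bm\lambda$ whose precise form is irrelevant (by Cauchy's theorem this coefficient is also computed by integrating over $\partial D_k^+$, or over any circle $t^2/\varepsilon<|z_k^+|<\varepsilon$). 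Symmetrically, in the coordinate $z_k^-$ one gets $c_{k,0}^-=+1$, $c_{k,1-n}^-=\rho^{n-1}\lambda_{k,n}^-$ for $n\ge2$, and $c_{k,m}^-=\frac{1}{2\pi\ii}\int_{\partial D_k^-}(z_k^-)^{-m}\wt{\omega}$ for $m\ge1$. Substituting $z_k^+=t^2/w$ with $w=z_{k+1}^-$ and using $(z_k^+)^{m-1}\,dz_k^+=-t^{2m}w^{-m-1}\,dw$ turns $\wt{\omega}|_{A_k^+}$ into $-\sum_m c_{k,-m}^+\,t^{-2m}\,w^{m-1}\,dw$, so the overlap condition for the pair $(k,k+1)$ becomes
\[
	c_{k+1,m}^-=-t^{-2m}\,c_{k,-m}^+\qquad\text{for all }m\in\Z.
\]

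Finally I would split this into three regimes. For $m=0$ it says $1=-(-1)$, which always holds; this is the automatic matching of the residues at the two mouths of the neck, and explains why no equation is imposed there. For $m\ge1$, inserting $c_{k,-m}^+=\rho^{m}\lambda_{k,m+1}^+$ and $c_{k+1,m}^-=\frac{1}{2\pi\ii}\int_{\partial D_{k+1}^-}(z_{k+1}^-)^{-m}\wt{\omega}$ and solving for $\lambda_{k,m+1}^+$ gives, with $n=m+1$, precisely the first claimed formula. For $m\le-1$, the left-hand coefficient is itself of principal-part type, $c_{k+1,m}^-=\rho^{-m}\lambda_{k+1,1-m}^-$, while $c_{k,-m}^+=\frac{1}{2\pi\ii}\int_{\partial D_k^+}(z_k^+)^{m}\wt{\omega}$ (a coefficient of index $-m\ge1$); solving for $\lambda_{k+1,1-m}^-$, setting $n=1-m$, and relabelling $k\mapsto k-1$ gives the second claimed formula. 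Hence the whole family of overlap conditions over $k\in\Z$ is equivalent to the stated system for the $\lambda_{k,n}^\pm$, and both implications of the ``if and only if'' follow. I expect the only genuine difficulty here to be clerical: keeping the signs and index shifts straight in the coordinate change $z_k^+z_{k+1}^-=t^2$, and tracking which Laurent coefficients are \emph{prescribed} (the negative-index ones, carrying the $\rho^{n-1}\lambda_{k,n}^\pm$ from the principal parts of the $\omega_{k,n}^\pm$) and which are \emph{determined} by the global form on $T_k$ (the non-negative-index ones, recovered as contour integrals over $\partial D_k^\pm$).
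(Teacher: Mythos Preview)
Your proof is correct and follows essentially the same approach as the paper: both express the gluing condition $\varphi_k^*\wt{\omega}=\wt{\omega}$ across each neck in terms of Laurent coefficients in the coordinates $z_k^+$ and $z_{k+1}^-$, split into the three regimes $m=0$, $m\ge1$, $m\le-1$, and observe that the $m=0$ case is automatic from the residue normalisation while the other two yield the claimed formulas after the index shift and relabelling. The only cosmetic difference is that the paper encodes the Laurent coefficients throughout as contour integrals $\int_{\partial D_k^+}(z_k^+)^n\wt{\omega}$ and computes the pullback integral by change of variable, whereas you write out the coefficients $c_{k,m}^\pm$ explicitly and transform the series; the computations are identical.
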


\begin{proof}
	Fix $t\neq 0$ and define a diffeomorphism
	\[
		\varphi_k=(z_{k+1}^-)^{-1}\circ\frac{t^2}{z_k^+}:A_k^+\to A_{k+1}^-
	\]
	so $z\in A_k^+$ is identified with $\varphi_k(z)\in A_{k+1}^-$ when opening nodes.
	Then $\wt{\omega}$ is well-defined on $\Sigma$ if and only if $\varphi_k^*\wt{\omega}=\wt{\omega}$ in the annulus $A_k^+$ for all
	$k\in\Z$. Using the theorem on Laurent series, this is equivalent to
	\begin{equation}
		\label{eq-laurent}
		\int_{\partial D_k^+}(z_k^+)^n(\varphi_k^*\wt{\omega}-\wt{\omega})=0\quad
		\text{for all $k\in\Z$ and $n\in\Z$.}
	\end{equation}
	We have
	\begin{align*}
		\int_{\partial D_k^+}(z_k^+)^n\varphi_k^*\wt{\omega}
		&=\int_{\partial D_k^+}\varphi_k^*\left[\left(\frac{t^2}{z_{k+1}^-}\right)^n\wt{\omega}\right]
		&\quad&\text{by definition of $\varphi_k$}\\
		&=\int_{\varphi_k(\partial D_k^+)}\left(\frac{t^2}{z_{k+1}^-}\right)^n\wt{\omega}
		&\quad&\text{by a change of variable}\\
		&=-\int_{\partial D_{k+1}^-}\left(\frac{t^2}{z_{k+1}^-}\right)^n\wt{\omega}
		&\quad&\text{because $\wt{\omega}$ is holomorphic in $A_{k+1}^-$.}
	\end{align*}
	For $n=0$, \eqref{eq-laurent} is always satisfied, because
	\[
		-\int_{\partial D_{k+1}^-}\wt{\omega}-\int_{\partial D_k^+}\wt{\omega}=-2\pi\ii\,\Res_{0_{k+1}}(\omega_{k+1,0_{k+1},v_{k+1}})-2\pi\ii\,\Res_{v_k}(\omega_{k,0_k,v_k})=0.
	\]
	For $n\geq 1$, we have by the Residue Theorem and definition of $\wt{\omega}(\x,\bm\lambda)$
	\[
		\int_{\partial D_k^+}(z_k^+)^n\wt{\omega}=2\pi\ii\,\rho^n\lambda_{k,n+1}^+,
	\]
	so \eqref{eq-laurent} is equivalent to
	\[
		2\pi\ii\,\rho^n\lambda_{k,n+1}^+=-\int_{\partial D_{k+1}^-}\left(\frac{t^2}{z_{k+1}^-}\right)^n\wt{\omega}.
	\]
	For $n\leq -1$, we have by the Residue Theorem 
	\[
		\int_{\partial D_{k+1}^-}\left(\frac{t^2}{z_{k+1}^-}\right)^n\wt{\omega}
		=2\pi\ii\, t^{2n} \rho^{-n}\lambda_{k+1,-n+1}^-
	\]
	so \eqref{eq-laurent} is equivalent to
	\[
		2\pi\ii \,t^{2n}\rho^{-n}\lambda_{k+1,-n+1}^-=-\int_{\partial D_k^+}(z_k^+)^n\wt{\omega}
	\]
	which, after replacing $n$ by $-n$ and $k$ by $k-1$, becomes
	\[
		2\pi\ii \,\rho^n\lambda_{k,n+1}^-=-\int_{\partial D_{k-1}^+}\left(\frac{t^2}{z_{k-1}^+}\right)^n\wt{\omega}
	\]
	for all $n\geq 1$. Collecting all results gives Lemma \ref{omega-lemma3}.
\end{proof}

\medskip

In view of Lemma \ref{omega-lemma3}, we define
\[
	L_{k,n}^{\pm}(t,\x,\bm\lambda)=\frac{-1}{2\pi\ii}\int_{\partial D_{k\pm
	1}^{\mp}}\left( \frac{t^2}{\rho\,z_{k\pm
	1}^{\mp}}\right)^{n-1}\wt{\omega}(\x,\bm\lambda)
\]
and $\bm L=(L_{k,n}^s)_{(k,n,s)\in\Lambda}$, so $\wt{\omega}(\x,\bm\lambda)$ is
well-defined on $\Sigma[t,\x]$ if and only if $\bm\lambda=\bm
L(t,\x,\bm\lambda)$.  Observe that $\bm L(t,\x,\bm\lambda)$ is defined for all
$\bm\lambda\in\ellinf$.  In particular, we do not need the convergence in
$A_k^{\pm}$ to define $\bm L$.  Also, $\wt\omega$ and $\bm L$ are affine with
respect to $\bm\lambda$.
\begin{lemma} \label{omega-lemma4}
	For $(t,\x)$ in a neighborhood of $(0,\cv{\x})$, $\bm\lambda\mapsto \bm
	L(t,\x,\bm\lambda)$ is contracting from $\ellinf$ to itself, hence has a
	fixed point $\bm\lambda(t,\x)$ by the Fixed Point Theorem.
\end{lemma}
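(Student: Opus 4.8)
The map $\bm L(t,\x,\cdot)$ is affine in $\bm\lambda$, so write
\[
	\bm L(t,\x,\bm\lambda)=\bm L(t,\x,\bm 0)+\cal{L}(t,\x)\bm\lambda,
\]
where $\cal{L}(t,\x)$ is the linear part. Since the map is affine, proving that $\bm L(t,\x,\cdot)$ is a contraction of $\ellinf$ amounts to proving that $\cal{L}(t,\x)$ is a bounded operator on $\ellinf$ of norm $\leq\frac12$, uniformly on a neighborhood of $(0,\cv{\x})$; the unique fixed point is then delivered by the Banach Fixed Point Theorem. The mechanism is the factor $t^{2(n-1)}$ present in every component of $\bm L$, which makes the linear part arbitrarily small with $t$.

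Both terms are controlled by the same elementary estimate. The circle $\partial D_{k\pm1}^{\mp}=\{|z_{k\pm1}^{\mp}|=\varepsilon\}$ lies in $\overline{\Omega_{k\pm1}}$, and $\varepsilon$ was chosen independently of $k$ and $\x$. For the constant term, the $(k,n,+)$-component of $\bm L(t,\x,\bm 0)$ is $\frac{-1}{2\pi\ii}\int_{\partial D_{k+1}^-}\big(\frac{t^2}{\rho\,z_{k+1}^-}\big)^{n-1}\omega_{k+1,0_{k+1},v_{k+1}}$; on $\partial D_{k+1}^-$ the scalar factor has modulus $(t^2/\rho\varepsilon)^{n-1}\leq 1$ for $|t|$ small, the circle has length $2\pi\varepsilon$, and $\|\omega_{k+1,0_{k+1},v_{k+1}}\|_{C^0(\Omega_{k+1})}\leq C$ uniformly in $k$ and $\x$ by Lemma~\ref{omega-lemma1}, Hypothesis~\ref{hyp:separate}, and $\x$ close to $\cv{\x}$, so this component is $\leq C\varepsilon$ and $\bm L(t,\x,\bm 0)\in\ellinf$. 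For the linear term, given $\bm\mu\in\ellinf$ the $(k,n,+)$-component of $\cal{L}(t,\x)\bm\mu$ is $\frac{-1}{2\pi\ii}\int_{\partial D_{k+1}^-}\big(\frac{t^2}{\rho\,z_{k+1}^-}\big)^{n-1}\sum_{m\geq2}\rho^{m-1}\big(\mu_{k+1,m}^+\omega_{k+1,m}^++\mu_{k+1,m}^-\omega_{k+1,m}^-\big)$; on $\partial D_{k+1}^-$, estimate~\eqref{omega-estimate1} (recall $\rho\leq\varepsilon/4$, so $\sum_{m\geq2}(2\rho/\varepsilon)^{m-1}\leq1$) bounds the pointwise norm of the series by $2C\|\bm\mu\|_{\infty}$, and the same two factors as before give
\[
	\big|(\cal{L}(t,\x)\bm\mu)_{k,n}^{\pm}\big|\leq 2C\varepsilon\Big(\frac{t^2}{\rho\varepsilon}\Big)^{n-1}\|\bm\mu\|_{\infty}.
\]
Taking $|t|$ so small that $t^2\leq\rho\varepsilon/2$ makes $(t^2/\rho\varepsilon)^{n-1}\leq t^2/\rho\varepsilon$ for every $n\geq2$, hence $\|\cal{L}(t,\x)\|\leq 2Ct^2/\rho$. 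Shrinking the neighborhood of $(0,\cv{\x})$ in the $t$-direction so that $2Ct^2/\rho\leq\frac12$, the map $\bm\lambda\mapsto\bm L(t,\x,\bm\lambda)$ is a contraction of $\ellinf$, with unique fixed point $\bm\lambda(t,\x)=(\mathrm{Id}-\cal{L}(t,\x))^{-1}\bm L(t,\x,\bm 0)$ (the inverse existing since $\|\cal{L}(t,\x)\|<1$); this last formula will also be convenient for the smooth dependence claimed in Proposition~\ref{prop:omega}(c).

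There is no genuine analytic obstacle: the contraction is essentially handed to us by the factor $t^{2(n-1)}$. The only point requiring attention is the uniformity in $k$ and in $\x$ of all the constants above, which rests entirely on Hypothesis~\ref{hyp:separate}, on $\varepsilon$ having been chosen independently of $k$ and $\x$, and on the uniform bounds~\eqref{omega-estimate1}--\eqref{omega-estimate2}, all of which are already established.
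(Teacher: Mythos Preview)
Your argument is correct and follows the same path as the paper's proof: use the affine structure of $\bm L$, bound the linear part via the estimate~\eqref{omega-estimate1}, and exploit the factor $(t^2/\rho\varepsilon)^{n-1}$ to get a contraction for small $t$. The only imprecision is that the circle $\partial D_{k\pm1}^{\mp}$ is the level set $|z_{k\pm1}^{\mp}|=\varepsilon$ of a local coordinate, so its length in the flat metric on $T_{k\pm1}$ is not literally $2\pi\varepsilon$; the paper simply bounds this length by a uniform constant $\ell$, which is all that is needed and follows from the uniform bi-Lipschitz bounds on the charts $z_k^{\pm}$.
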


\begin{proof}
	We can bound the length of $\partial D_{k}^{\pm}$ by a uniform constant
	$\ell$.  By Estimate \eqref{omega-estimate1}:
	\[
		|L_{k,n}^{\pm}(t,\x,\bm\lambda)-L_{k,n}^{\pm}(t,\x,\bm 0)|\leq
		\frac{\ell}{2\pi}\left(\frac{t^2}{\rho\varepsilon}\right)^{n-1}2C\|\bm\lambda\|_{\infty}.
	\]
	Hence if $t^2\leq\rho\varepsilon$,
	\[
		\|\bm L(t,\x,\bm\lambda)-\bm L(t,\x,\bm 0)\|_{\infty}\leq \frac{C\ell
		t^2}{\pi\rho\varepsilon}\|\bm\lambda\|_{\infty}.
	\]
	so $\bm L$ is contracting for $t$ sufficiently small.
\end{proof}

\medskip

Now we verify the convergence of~\eqref{eq-omega-series} outside $\Omega_k$.

\begin{lemma} \label{omega-lemma5}
	If $\bm\lambda=\bm L(t,\x,\bm\lambda)$ and $t \ne 0$ is sufficiently small,
	the series \eqref{eq-omega-series} converges absolutely in the annulus
	$A_k^{\pm}$ for all $k\in\Z$.
\end{lemma}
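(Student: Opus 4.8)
The plan is to exploit the fixed point equation $\bm\lambda=\bm L(t,\x,\bm\lambda)$ to obtain a geometric decay of the coefficients $\rho^{n-1}\lambda_{k,n}^{\pm}$ in $t$ that overwhelms the growth of $\omega_{k,n}^{\pm}$ near their poles. Only the tail $\sum_{n\geq 2}\rho^{n-1}\bigl(\lambda_{k,n}^+\omega_{k,n}^+ +\lambda_{k,n}^-\omega_{k,n}^-\bigr)$ of \eqref{eq-omega-series} needs attention, since the leading term $\omega_{k,0_k,v_k}$ is a fixed meromorphic $1$-form; I will treat $A_k^+$, the case of $A_k^-$ being symmetric. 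First I would note that $\partial D_{k+1}^-$ lies on $\partial\Omega_{k+1}$, where $|z_{k+1}^-|=\varepsilon$, so by Estimate \eqref{omega-estimate2} the $1$-form $\wt\omega(\x,\bm\lambda)$ is bounded there by $C(1+\|\bm\lambda\|_\infty)$; bounding the length of $\partial D_{k+1}^-$ by a uniform constant $\ell$, the definition of $L_{k,n}^+$ gives $\rho^{n-1}|\lambda_{k,n}^+|\leq C_1\,(t^2/\varepsilon)^{n-1}$ with $C_1=\frac{\ell}{2\pi}C(1+\|\bm\lambda\|_\infty)$ finite (because the fixed point $\bm\lambda(t,\x)$ lies in $\ellinf$ by Lemma \ref{omega-lemma4}) and independent of $k$; the analogous bound for $\rho^{n-1}|\lambda_{k,n}^-|$ comes from the integral over $\partial D_{k-1}^+$.

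Next I would separate the singular part of $\omega_{k,n}^+$. Since it has principal part exactly $dz_k^+/(z_k^+)^n$ at $v_k$ and no other pole, the difference $\theta_{k,n}^+:=\omega_{k,n}^+-dz_k^+/(z_k^+)^n$ extends holomorphically to the disk $|z_k^+|<2\varepsilon$; applying the Maximum Principle to its holomorphic coefficient on $|z_k^+|\leq\varepsilon$, together with Lemma \ref{omega-lemma2} on the circle $|z_k^+|=\varepsilon$ (and the fact that the change of coordinate $z\leftrightarrow z_k^+$ on $D_k^+$ has derivative uniformly bounded above and below), would give $\|\theta_{k,n}^+\|_{C^0(|z_k^+|\leq\varepsilon)}\leq C(2/\varepsilon)^{n-1}$. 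Likewise, by Hypothesis \ref{hyp:separate} the pole $0_k$ of $\omega_{k,n}^-$ lies outside $\overline{D_k^+}$, so $\omega_{k,n}^-$ is holomorphic there and the same argument gives $\|\omega_{k,n}^-\|_{C^0(|z_k^+|\leq\varepsilon)}\leq C(2/\varepsilon)^{n-1}$.

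Assembling, for $z=z_k^+$ with $t^2/\varepsilon<|z|<\varepsilon$ the tail is bounded in absolute value by $\sum_{n\geq2}\rho^{n-1}|\lambda_{k,n}^+|\,|z|^{-n}$ plus $\sum_{n\geq2}\rho^{n-1}\bigl(|\lambda_{k,n}^+|\,\|\theta_{k,n}^+\|+|\lambda_{k,n}^-|\,\|\omega_{k,n}^-\|\bigr)$. The first series is at most $\frac{C_1}{|z|}\sum_{n\geq2}\bigl(t^2/(\varepsilon|z|)\bigr)^{n-1}$, which converges precisely because $|z|>t^2/\varepsilon$; the second is at most $C_1C\sum_{n\geq2}(2t^2/\varepsilon^2)^{n-1}$, which converges once $t$ is small enough that $t^2<\varepsilon^2/2$. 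Both bounds are locally uniform in $z$ on the open annulus, so \eqref{eq-omega-series} converges absolutely in $A_k^+$, and symmetrically in $A_k^-$. The step I expect to be the main obstacle is the coefficient decay $\rho^{n-1}|\lambda_{k,n}^\pm|\lesssim (t^2/\varepsilon)^{n-1}$ and, more importantly, using it correctly: one must resist bounding $\omega_{k,n}^+$ by its supremum over the annulus (that supremum grows like $(\varepsilon/t^2)^n$, which the decay does not beat) and instead keep the pointwise factor $|z|^{-n}$, so that termwise the ratio $t^2/(\varepsilon|z|)<1$ on $A_k^+$ saves the day.
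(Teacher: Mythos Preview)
Your proof is correct and shares the same skeleton as the paper's---use the fixed-point identity to get geometric decay of $\lambda_{k,n}^\pm$, split off the principal part of $\omega_{k,n}^+$, and control the holomorphic remainder by the Maximum Principle---but the decisive step is handled differently. The paper does not integrate over $\partial D_{k+1}^-$; it first moves the contour by Cauchy's Theorem to the larger circle $|z_{k+1}^-|=2\varepsilon$, which buys an extra factor of $2^{n-1}$ in the coefficient bound, namely $|\lambda_{k,n}^+|\leq C(1+\|\bm\lambda\|_\infty)\bigl(t^2/(2\rho\varepsilon)\bigr)^{n-1}$. With that extra decay the paper \emph{does} take the supremum $\sup_{A_k^+}|\omega_{k,n}^+|\leq C(\varepsilon/t^2)^n$ that you warn against, and the product still sums geometrically like $\sum 2^{-n}$, uniformly on all of $A_k^+$. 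Your route keeps the contour at radius $\varepsilon$ and compensates by retaining the pointwise factor $|z_k^+|^{-n}$, which yields absolute (locally uniform) convergence on the open annulus---all the lemma asserts. So your closing caution is accurate for your decay rate but not for the paper's; each approach buys something: the contour shift gives a clean uniform bound, while yours is more direct and makes transparent that the inner radius condition $|z_k^+|>t^2/\varepsilon$ is precisely what drives convergence of the principal-part series.
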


\begin{proof}
	We only deal with the convergence of $\sum_{n\geq 2} \rho^{n-1}
	\lambda_{k,n}^+ \omega_{k,n}^+$.  Its convergence in $A_k^-$ is
	straightforward because $\omega_{k,n}^+$ is holomorphic in $D_k^-$ and we
	already know the convergence on $\partial D_k^-$.  It remains to prove the
	convergence in $A_k^+$.

	By Cauchy Theorem, we can replace the circle $\partial D_{k+1}^-$ by the
	circle $|z_{k+1}^-|=2\varepsilon$ in the definition of
	$L_{k,n}^+(t,\x,\bm\lambda)$. Using Estimate \eqref{omega-estimate2}, this
	gives
	\begin{equation} \label{omega-estimate3}
		|\lambda_{k,n}^+|=|L_{k,n}^+(t,\x,\bm\lambda)|
		\leq C(1+\|\bm\lambda\|_{\infty})\left(\frac{t^2}{2\rho\varepsilon}\right)^{n-1}.
	\end{equation}
	By definition, the function
	\[
		\frac{\omega_{k,n}^+}{dz}-\frac{(z_k^+)'}{(z_k^+)^n}
	\]
	extends holomorphically to the disk $D_k^+$. By the maximum principle and
	Lemma \ref{omega-lemma2}
	\[
		\sup_{D_k^+}\left|\frac{\omega_{k,n}^+}{dz}-\frac{(z_k^+)'}{(z_k^+)^n}\right|
		=\max_{\partial D_k^+}\left|\frac{\omega_{k,n}^+}{dz}-\frac{(z_k^+)'}{(z_k^+)^n}\right|
		\leq C\left(\frac{2}{\varepsilon}\right)^{n-1}+\frac{C}{\varepsilon^n}\leq C\left(\frac{2}{\varepsilon}\right)^n.
	\]
	Hence recalling the definition of $A_k^+$, and provided $2t^2\leq
	\varepsilon^2$:
	\[
		\sup_{A_k^+}|\omega_{k,n}^+|\leq C\left(\frac{2}{\varepsilon}\right)^n+C\left(\frac{\varepsilon}{t^2}\right)^n\leq C\left(\frac{\varepsilon}{t^2}\right)^n.
	\]
	Using Estimate \eqref{omega-estimate3}, we obtain
	\[
		\sup_{A_k^+}\rho^{n-1}|\lambda_{k,n}^+\omega_{k,n}^+|\leq \frac{C\varepsilon}{t^2 2^n}(1+\|\bm\lambda\|_{\infty}).
	\]
	Hence the series $\sum_{n\geq 2}
	\rho^{n-1}\lambda_{k,n}^+\omega_{k,n}^+$ converges absolutely in
	$A_k^+$.

	Convergence of $\sum_{n\geq 2} \rho^{n-1} \lambda_{k,n}^- \omega_{k,n}^-$
	follows similarly.
\end{proof}

\medskip

We define $\omega[t,\x]=\wt{\omega}(t,\bm\lambda(t,\x))$.  By Lemmas
\ref{omega-lemma3} and \ref{omega-lemma5}, $\omega[t,\x]$ is a well-defined
holomorphic 1-form on $\Sigma[t,\x]$ and has the desired periods by definition.
This proves Proposition \ref{prop:omega}(a).  At $t=0$, $\bm L=\bm 0$ so
$\bm\lambda=\bm 0$ and $\omega=\omega_{k,0_k,v_k}$ in $T_k$.
Proposition~\ref{prop:omega}(b) follows from Lemma \ref{omega-lemma1}.

\subsection{Smooth dependence on parameters}

We denote $\wt{\gamma}_k^-$ the circle $|z|=2\varepsilon'$ in
$\T=\C/(\Z+\ii\Z)$ and $\wt{\gamma}_k^+$ the circle
$|z-\wt{v}_k|=2\varepsilon'$.  These two circles are fixed and included in the
domain $\wt{\Omega}_k$.  Define for $\eta\in C^0(\wt{\Omega})$
\[
	\wt{L}_{k,n}^{\pm}(t,\x,\eta)=\frac{-1}{2\pi\ii}\int_{\wt{\gamma}_{k\pm 1}^{\mp}}\left(
	\frac{t^2}{\rho\,z_{k\pm 1}^{\mp}\circ\psi_{k\pm 1}}\right)^{n-1}\eta
\]
and let $\wt{\bm L}=(\wt{L}_{k,n}^s)_{(k,n,s)\in\Lambda}.$
Using a change of variable
\[
	\wt{L}_{k,n}^{\pm}(t,\x,(\psi^*\wt{\omega})(\x,\bm\lambda))=\frac{-1}{2\pi\ii}
	\int_{\psi_{k\pm 1}(\wt{\gamma}_{\pm 1}^{\mp})}\left(\frac{t^2}{\rho\,z_{k\pm 1}^{\mp}}\right)^{n-1}\wt{\omega}(\x,\bm\lambda)
	=L_{k,n}^{\pm}(t,\x,\bm\lambda).
\]
Hence
\begin{equation} \label{eq-F-wtF}
	\bm L(t,\x,\bm\lambda)=\wt{\bm L}(t,\x,(\psi^*\wt{\omega})(\x,\bm\lambda)).
\end{equation}
By Lemma \ref{omega-lemma6} below and composition, $\bm L$ is smooth so its fixed point $\bm\lambda(t,\x)$ depends smoothly on $(t,\x)$. By the first point of Lemma \ref{omega-lemma6}, $(\psi^*\omega)[t,\x]=(\psi^*\wt{\omega})(\x,\bm\lambda(t,\x))$ depends smoothly on $(t,\x)$. This proves Proposition \ref{prop:omega}(c).
\begin{lemma} \label{omega-lemma6}
	\leavevmode
	\begin{enumerate}
		\item $\psi^*\wt{\omega}$ is a smooth function of $\x$ in an
			$\ellinf$-neighborhood of $\cv{\x}$ and
			$\bm\lambda\in\ellinf$, with value in $C^0(\wt{\Omega})$.

		\item $\wt{\bm L}$ is a smooth function of $t$ in a neighborhood of $0$,
			$\x$ in an $\ellinf$-neighborhood of $\cv{\x}$ and $\eta\in
			C^0(\wt{\Omega})$, with value in $\ellinf$.
	\end{enumerate}
\end{lemma}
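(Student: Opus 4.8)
The plan is to reduce both assertions to one elementary principle that underlies the arguments of \cite{traizet2013}: if $(B_j)$, $(B'_j)$ are Banach spaces and $\Phi_j\colon U_j\subset B_j\to B'_j$ are smooth maps whose derivatives satisfy $\sup_j\sup_{U_j}\|D^r\Phi_j\|<\infty$ for every $r$ (the $U_j$ having a uniform inner radius), then the block-diagonal map $\Phi$ on the $\ellinf$-sums, defined by $(\Phi(x))_j=\Phi_j(x_j)$, is smooth, with $D^r\Phi$ acting blockwise. So it suffices to exhibit, for each index block, a smooth building map all of whose derivatives are bounded uniformly in the block, and, where an infinite series in $n$ appears, summably in $n$. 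Throughout I write $x_k=(a_k,b_k,v_k,\tau_k)$ for the $k$-th block of $\x$.

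\emph{For (1):} In $\wt\Omega_k$ one has, by \eqref{eq-omega-series}, $(\psi^*\wt\omega)(\x,\bm\lambda)=\psi_k^*\omega_{k,0_k,v_k}+\sum_{n\ge 2}\rho^{n-1}(\lambda_{k,n}^+\,\psi_k^*\omega_{k,n}^++\lambda_{k,n}^-\,\psi_k^*\omega_{k,n}^-)$, which is affine in $\bm\lambda$; the map $\bm\lambda_k\mapsto$ (the series) is linear with operator norm $\le 2C$ by \eqref{omega-estimate1}, so only the $\x$-dependence, the joint smoothness, and the convergence need attention. For the first term I would use the explicit formula of Lemma~\ref{omega-lemma1}: since $\zeta(\,\cdot\,;\,\cdot\,)$ is holomorphic and $\xi(\,\cdot\,;\,\cdot\,)$, $\psi_k$ are real-analytic in their arguments, and since for $\varepsilon'$ small and $\x$ near $\cv\x$ the point $\psi_k(\cdot)$ stays on $\wt\Omega_k$ at a uniform distance from $0_k$, $v_k$, and all other lattice points (Hypothesis~\ref{hyp:separate} and boundedness of $\tau_k$ away from $\R$), this term is smooth in $x_k$ into $C^0(\wt\Omega_k)$ with all $x_k$-derivatives bounded uniformly in $k$. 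For the building blocks $\omega_{k,n}^\pm$ I would reopen the proof of Lemma~\ref{omega-lemma2}: after moving the contour in the definition of $\chi_{k,n}^+$ to the fixed circle $|z_k^+|=\varepsilon/2$ and parametrizing it by $z_k^+=(\varepsilon/2)e^{\ii\theta}$ --- equivalently $z=(z_k^+)^{-1}((\varepsilon/2)e^{\ii\theta})$, which by the inverse function theorem applied to the holomorphic family $g_k$ is real-analytic in $(\theta,x_k)$ with uniform bounds --- the integral representation \eqref{integral-representation} yields $\psi_k^*\omega_{k,n}^\pm=(2/\varepsilon)^{n-1}R_{k,n}^\pm(x_k)$, where $R_{k,n}^\pm(x_k)$ is an integral over fixed curves in $\wt\Omega_k$ and over $\theta\in[0,2\pi]$ of an $x_k$-smooth, uniformly bounded integrand times the parameter-free factor $e^{\ii(1-n)\theta}$. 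The decisive point is that differentiation in $x_k$ never touches that factor, so $\sup_k\sup_n\|D^rR_{k,n}^\pm\|\le C_r<\infty$ for every $r$. Since $\rho\le\varepsilon/4$, the prefactor $(2\rho/\varepsilon)^{n-1}\le 2^{1-n}$ makes the series converge in $C^0(\wt\Omega_k)$ together with all its $x_k$-derivatives, uniformly in $k$, and jointly in $(x_k,\bm\lambda_k)$; the block-diagonal principle then gives (1).

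\emph{For (2):} $\wt L_{k,n}^\pm(t,\x,\eta)$ is linear in $\eta$, so smoothness in $\eta$ is a uniform operator bound. On the fixed circle $\wt\gamma_{k\pm1}^\mp$ the function $z_{k\pm1}^\mp\circ\psi_{k\pm1}=1/(g_{k\pm1}\circ\psi_{k\pm1})$ is holomorphic and nonvanishing (it vanishes only at the node), uniformly in $k$ and in $\x$ near $\cv\x$, hence $|z_{k\pm1}^\mp\circ\psi_{k\pm1}|\ge c>0$ there; thus $|t^2/(\rho\,z_{k\pm1}^\mp\circ\psi_{k\pm1})|\le t^2/(\rho c)<1/2$ for $t$ small, and $|\wt L_{k,n}^\pm(t,\x,\eta)|\le C\,2^{1-n}\|\eta\|_{C^0(\wt\Omega)}$ uniformly in $(k,n)$. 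For the $(t,\x)$-dependence, the only $\x$-dependence is through $z_{k\pm1}^\mp\circ\psi_{k\pm1}$ restricted to that fixed circle, a real-analytic, uniformly bounded and uniformly-bounded-below function of $x_{k\pm1}$; hence $\bigl(t^2/(\rho\,z_{k\pm1}^\mp\circ\psi_{k\pm1})\bigr)^{n-1}$ and all its $(t,\x)$-derivatives are controlled by $C_r\,n^{C_r}\,(t^2/(\rho c))^{n-1}$, the one point to watch being that a $t$-derivative of order $j$ annihilates the monomial $t^{2(n-1)}$ as soon as $j>2(n-1)$, which is exactly what keeps everything bounded as $t\to0$; summing over $n$ against the geometric factor gives $\sup_k\sup_n\|D^r_{(t,\x)}\wt L_{k,n}^\pm\|\le C_r$, and the block-diagonal principle gives (2).

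The bulk of the real work lies in (1): proving the uniform-in-$k$, geometrically-decaying-in-$n$ bounds on \emph{every} $x_k$-derivative of $\psi_k^*\omega_{k,n}^\pm$, which forces one back into the integral representation \eqref{integral-representation}, the fixing of the contour, and differentiation under the integral sign. What makes this go through is the clean separation there of the parameter dependence from the $n$-dependence, carried entirely by $(2/\varepsilon)^{n-1}e^{\ii(1-n)\theta}$; once that book-keeping is arranged, part (2) and the passage to $\ellinf$ are routine.
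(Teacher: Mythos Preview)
Your proposal is essentially correct and follows the same architecture as the paper: reduce everything to a ``block-diagonal smoothness'' principle (which the paper states as Proposition~\ref{prop:smooth1}) once one has uniform-in-$k$, summable-in-$n$ bounds on all derivatives of the building blocks, and then close with the boundedness of the obvious bilinear pairings. The paper organizes those bounds into a separate Claim (\ref{claim-omega}) and the bilinear operators \eqref{eq-bilinear-op1}, \eqref{eq-bilinear2}, but the content is the same.

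The one substantive difference is \emph{how} the derivative bounds on $\psi_k^*\omega_{k,n}^{\pm}$ and on $\bigl(t^2/(\rho\,z_k^{\pm}\circ\psi_k)\bigr)^{n-1}$ are obtained. You differentiate under the integral after fixing the contour and isolating the $n$-dependence in the oscillatory factor $e^{\ii(1-n)\theta}$; this is correct and yields bounds of the form $C_r\,(2/\varepsilon)^{n-1}$ (resp.\ $C_r\,n^{C_r}(t^2/\rho c)^{n-1}$). The paper instead observes that $\eta_{k,n}^{\pm}$ and $z_k^{\pm}\circ\psi_k$ depend \emph{holomorphically} on $x_k$, so Cauchy estimates immediately upgrade the already-proved $C^0$ bound \eqref{estimate-eta} to bounds on every $x_k$-derivative, uniformly in $(k,n)$, without reopening the integral representation or tracking the parametrization. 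This is shorter and avoids the polynomial-in-$n$ factors you pick up in part~(2).

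One small gap: the integral representation \eqref{integral-representation} produces $\eta_{k,n}^{\pm}$ (the normalization $\int_{\alpha_k}=0$), not $\omega_{k,n}^{\pm}$ (the real normalization). You need the transfer step \eqref{omega-eta}, i.e.\ add $\frac{\ii}{\Im\tau_k}\bigl(\Re\int_{\beta_k}\eta_{k,n}^{\pm}\bigr)\psi_k^*dz$; this correction is real-linear in $\eta_{k,n}^{\pm}$ and enjoys the same uniform bounds, so it is harmless, but it should be mentioned. Note also that this correction is only real-analytic (not holomorphic) in $x_k$, which is why the paper applies Cauchy estimates to $\eta_{k,n}^{\pm}$ and only afterwards passes to $\omega_{k,n}^{\pm}$ via \eqref{omega-eta}.
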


\medskip

For any infinite set $K$, if $(V_k)_{k\in K}$ is a sequence of normed spaces,
we denote
\[
	\big(\bigoplus_{k\in K}V_k\big)_{\infty}=\{x\in\prod_{k\in
	K}V_k:\|x\|_{\infty}=\sup_{k\in K} \|x_k\|<\infty\}.
\]
To prove Lemma \ref{omega-lemma6}, we use the following elementary fact.
\begin{proposition} \label{prop:smooth1}
	For $k\in K$, let $f_k:B(0,r)\subset U_k\to V_k$ be a smooth function between  normed
	spaces. Assume that there exists uniform constants $C(m)$ such that
	\[
		\forall m\in\N,\quad\forall k\in K,\quad \forall x_k\in B(0,r),\quad \|d^m f_k(x_k)\|\leq C(m)
	\]
	where $d^m f_k$ denotes the $m$-th order differential of $f_k$.  Let
	$U^\infty=(\bigoplus_{k\in K} U_k)_{\infty}$ and $V^\infty=(\bigoplus_{k\in K}
	V_k)_{\infty}$.  Define $\bm f:B(0,r)\subset U^\infty\to V^\infty$ by $\bm
	f(\x)=(f_k(x_k))_{k\in K}$.  Then $\bm f$ is smooth and
	$d\bm f(\x)\bm h=(df_k(x_k)h_k)_{k\in K}$.
\end{proposition}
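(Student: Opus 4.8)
The plan is to prove the statement by induction on the differentiability order, with the inductive step driven by the observation that $\x \mapsto d\bm f(\x)$ is again a map of exactly the same shape as $\bm f$. Before the induction I would settle well-definedness and identify the candidate derivative. The hypothesis with $m=0$ reads $\|f_k(x_k)\| \le C(0)$, so $\bm f$ really lands in $V^\infty$. For $\bm h \in U^\infty$ set $L(\x)\bm h = (df_k(x_k)h_k)_{k\in K}$; since $\sup_k \|df_k(x_k)h_k\| \le C(1)\sup_k\|h_k\| = C(1)\|\bm h\|_\infty$, this $L(\x)$ is a bounded linear operator $U^\infty \to V^\infty$ with $\|L(\x)\| \le C(1)$.

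Next I would show that $\bm f$ is Fréchet differentiable at every $\x$ with $d\bm f(\x) = L(\x)$. Fix $\bm h$ small enough that $x_k + t h_k \in B(0,r)$ for all $k$ and all $t \in [0,1]$ (possible since balls are convex and $\|x_k + h_k\| \le \|\x + \bm h\|_\infty < r$). Applying the integral form of Taylor's theorem to $g(t) = f_k(x_k + t h_k)$, whose second derivative is $d^2 f_k(x_k+th_k)(h_k,h_k)$, and using $\|d^2 f_k\| \le C(2)$, gives $\|f_k(x_k+h_k) - f_k(x_k) - df_k(x_k)h_k\| \le \tfrac12 C(2)\|h_k\|^2$. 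Taking the supremum over $k$, the numerator of the differentiability quotient is at most $\tfrac12 C(2)\|\bm h\|_\infty^2 = o(\|\bm h\|_\infty)$; hence $\bm f$ is differentiable with $d\bm f(\x) = L(\x)$, and in particular continuous.

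For the induction I would use the isometric identification of the ``diagonal'' operators inside $\mathcal L(U^\infty, V^\infty)$ with $(\bigoplus_{k\in K}\mathcal L(U_k,V_k))_\infty$, via $(T_k)_k \mapsto (\bm h \mapsto (T_k h_k)_k)$, whose operator norm equals $\sup_k \|T_k\|$. Under this identification, $\x \mapsto d\bm f(\x)$ is precisely the map associated to the family $g_k := df_k : B(0,r) \subset U_k \to \mathcal L(U_k, V_k)$, which is smooth with $d^m(df_k) = d^{m+1}f_k$, hence satisfies the hypothesis of the proposition with $C(m)$ replaced by $C(m+1)$. Therefore, granting the proposition for $C^m$ maps, $\x \mapsto d\bm f(\x)$ is $C^m$, so $\bm f$ is $C^{m+1}$; since the base case (continuity) is already in hand, induction on $m$ yields $\bm f \in C^\infty$, and the derivative formula is the one established in the second step.

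I expect the only delicate point to be the bookkeeping in the last step: checking that the diagonal embedding is isometric, so that smoothness of $\x \mapsto d\bm f(\x)$ as a map into $\mathcal L(U^\infty, V^\infty)$ is equivalent to its smoothness as a map into the $\ell^\infty$-sum of the $\mathcal L(U_k, V_k)$, and that the induction hypothesis is applied to the correctly shifted family of bounds $C(m+1)$. The genuinely analytic input — the uniform second-order Taylor remainder — is immediate from $\|d^2 f_k\| \le C(2)$, and no estimate beyond this is required.
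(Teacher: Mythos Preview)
Your proposal is correct and follows essentially the same approach as the paper: differentiability via the Taylor formula with integral remainder (using the uniform bound $C(2)$), and smoothness by induction, applying the proposition to the family $g_k = df_k$. The paper's proof is a two-line sketch of exactly this argument; you have simply filled in the details, including the isometric identification of diagonal operators that makes the inductive step clean.
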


We summarize the hypothesis of Proposition \ref{prop:smooth1} by saying that
the functions $f_k$ have \emph{uniformly bounded derivatives}.

\begin{proof}
	It is straightforward to prove that $f$ is differentiable (with the indicated
	differential) using Taylor Formula with integral remainder.  Smoothness
	follows by induction.
\end{proof}

\medskip

Recall that $\Omega_{k,r}$ denotes the torus $T_k$ minus the disks
$|z_k^{\pm}|\leq r$.  We fix a uniform positive $\varepsilon''<\varepsilon$ so
that for all $\x$ in a neighborhood of $\cv{\x}$ and $k\in\Z$,
$\psi_k(\wt{\Omega}_k)\subset\Omega_{k,\varepsilon''}$.  We choose $\rho$ such
that $\rho\leq\varepsilon''/4$.
\begin{claim} \label{claim-omega}
	\leavevmode
	\begin{enumerate}
		\item For $k\in\Z$, $\psi_k^*\omega_{k,0_k,v_k}$ is a smooth function of
			$(\tau_k,v_k)$ in a neighborhood of $(\cv{\tau}_k,\cv{v}_k)$, with value
			in $C^0(\wt{\Omega}_k)$ and has uniformly bounded derivatives.

		\item For $k\in\Z$ and $n\geq 2$,
			$(\frac{\varepsilon''}{2})^{n-1}\psi_k^*\omega_{k,n}^{\pm}$ is a smooth
			function of $x_k=(a_k,b_k,v_k,\tau_k)$ in a neighborhood of $\cv{x}_k$,
			with value in $C^0(\wt{\Omega}_k)$, and has uniformly (with respect to $k$
			and $n$) bounded derivatives.

		\item If $t^2<\rho\varepsilon''$, then for $k\in\Z$ and $n\geq 2$,
			$\left(\frac{t^2}{\rho z_k^{\pm}\circ\psi_k}\right)^{n-1}$ restricted to
			$\wt{\gamma}_k^{\pm}$ is a smooth function of $t$ in a neighborhood of
			$0$ and $x_k$ in a neighborhood of $\cv{x}_k$, with value in
			$C^0(\wt{\gamma}_k^{\pm})$, and has uniformly bounded derivatives.
	\end{enumerate}
\end{claim}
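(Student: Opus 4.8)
These three statements are exactly the uniform-bounded-derivative hypotheses that will be fed into Proposition~\ref{prop:smooth1} in the proof of Lemma~\ref{omega-lemma6}, so it suffices to verify, for each fixed $k\in\Z$ (and each $n\ge2$ in (b),(c)), that the listed objects are smooth in the indicated variables with derivatives of all orders bounded uniformly in $k$ (and $n$). The unifying remark is that everything in sight is a composition built from $\zeta(\cdot\,;\tau_k)$, $\xi(\cdot\,;\tau_k)$, $\eta_i(\tau_k)$, the coordinate $z_k^\pm=1/g_k$ and $g_k$ itself, and the real-linear maps $\psi_k(x+\ii y)=x+\tau_k y$. Uniformity in $k$ holds because the central values \eqref{eq:central-value} stay in a fixed compact set: $\cv\tau_k\in\{\tau,-\overline\tau\}$ has $\Im\cv\tau_k=\Im\tau>0$, while $\cv v_k=(-\conj)^kq_k$ lies in a fixed compact subset of $T_k\setminus\{0_k\}$ by Hypothesis~\ref{hyp:separate} and compactness of $T$. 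Hence for $\x$ near $\cv\x$ in $\ellinf$ the arguments of $\zeta$ occurring below stay in a fixed compact region off the lattice $\Z+\tau_k\Z$, and $g_k$ stays uniformly close to having a simple pole at $0_k$ and at $v_k$, so $z_k^\pm$ is a bona fide coordinate on a disk of uniform radius. Part~(a) is then immediate from Lemma~\ref{omega-lemma1}: on $\wt\Omega_k$,
\[
	\psi_k^*\omega_{k,0_k,v_k}=\bigl(\zeta(\psi_k(z);\tau_k)-\zeta(\psi_k(z)-v_k;\tau_k)-\xi(v_k;\tau_k)\bigr)\,\psi_k^*dz,
\]
and since $\psi_k(\wt\Omega_k)\subset\Omega_{k,\varepsilon''}$ the two $\zeta$-arguments stay at distance $\ge\varepsilon''$ from the lattice, uniformly in $k$, which gives the smoothness and uniform bounds.

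For (b) I would re-run the integral representation in the proof of Lemma~\ref{omega-lemma2} with the circle $\partial D_k^+$ replaced throughout by a fixed circle $|z_k^+|=c$ with $\varepsilon''/2\le c<\varepsilon''$; this is legitimate because $p,q\in\psi_k(\wt\Omega_k)\subset\Omega_{k,\varepsilon''}$ have $|z_k^\pm|>\varepsilon''>c$, so every residue computation there remains valid. Using \eqref{omega-eta} to pass between $\omega_{k,n}^+$ and $\eta_{k,n}^+=f_{k,n}^+\,dz$, and $f_{k,n}^+(p)=-\int_{\alpha_k}\chi_{k,n}^+(p,q)\,dq$, the change of variables $w=z_k^+$ turns $\chi_{k,n}^+$ into an integral $\frac{1}{2\pi\ii}\int_{|w|=c}w^{-n}H(w;x_k,p,q)\,dw$ over a \emph{fixed} circle, where, by Lemma~\ref{omega-lemma1} and smoothness of $z_k^+=1/g_k$, the factor $H$ is smooth in $(w,x_k,p,q)$ with uniformly bounded derivatives on the relevant compacta. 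Since the only $n$-dependent, parameter-free factor is $w^{-n}$, differentiation in the parameters never touches it, so $(\varepsilon''/2)^{n-1}|\partial^m\chi_{k,n}^+|\le C_m(\varepsilon''/2c)^{n-1}\le C_m$, uniformly in $k$ and $n$. Writing $\alpha_k=\psi_k(\wt\alpha_k)$, $\beta_k=\psi_k(\wt\beta_k)$ for fixed curves, integrating, and applying \eqref{omega-eta} are all bounded operations, so $(\varepsilon''/2)^{n-1}\psi_k^*\omega_{k,n}^+$ has uniformly bounded derivatives in $x_k$; the $-$ case is identical.

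For (c) I would simply write $\bigl(\tfrac{t^2}{\rho\,z_k^\pm\circ\psi_k}\bigr)^{n-1}=\bigl(\tfrac{t^2}{\rho}\bigr)^{n-1}(g_k\circ\psi_k)^{n-1}$. The circles $\wt\gamma_k^\pm$ lie in $\wt\Omega_k$, so $\psi_k(\wt\gamma_k^\pm)\subset\Omega_{k,\varepsilon''}$ and hence $|z_k^\pm\circ\psi_k|>\varepsilon''$, i.e.\ $|g_k\circ\psi_k|<1/\varepsilon''$, on $\wt\gamma_k^\pm$; moreover $g_k\circ\psi_k$ is smooth in $x_k$ with uniformly bounded derivatives there. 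Thus, for $t^2<\rho\varepsilon''$ (so that $t^2/(\rho\varepsilon'')=:\theta<1$), the sup-norm of $\bigl(\tfrac{t^2}{\rho z_k^\pm\circ\psi_k}\bigr)^{n-1}$ on $\wt\gamma_k^\pm$ is $<\theta^{n-1}$, and a derivative of order $m$ in $x_k$ and $m'$ in $t$ (the latter vanishing once $m'>2(n-1)$) is bounded by $C_{m,m'}n^{m+m'}t_0^{-m'}\theta^{n-1}$; the geometric factor $\theta^{n-1}$ dominates the polynomial in $n$, so the bound is uniform in $k$ and $n$.

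I expect (b) to be the main obstacle: one has to arrange the contour and the change of variables so that all parameter dependence is confined to a factor with uniformly bounded derivatives while the $n$-growing, parameter-free factor $w^{-n}$ is exactly balanced by the prefactor $(\varepsilon''/2)^{n-1}$. Everything else is a routine, if tedious, uniformity bookkeeping, whose recurring motif is that a polynomial-in-$n$ loss from differentiating a negative power is absorbed by a geometric factor coming either from the contour radius (in (b)) or from $t$ being small (in (c)).
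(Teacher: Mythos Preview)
Your plan is correct and would go through, but for parts~(b) and~(c) you take a more laborious route than the paper. The paper's key observation is that the auxiliary differential $\eta_{k,n}^{\pm}$ (normalized by $\int_{\alpha_k}\eta=0$, not by real periods) depends \emph{holomorphically} on $x_k=(a_k,b_k,v_k,\tau_k)$, and that $\psi_k$ depends holomorphically on $\tau_k$; hence $\psi_k^*\eta_{k,n}^{\pm}(z)$ is holomorphic in $x_k$ for each fixed $z\in\wt{\Omega}_k$. Once you have the $C^0$ bound $\|\psi_k^*\eta_{k,n}^{\pm}\|_{C^0(\wt{\Omega}_k)}\le C(2/\varepsilon'')^{n-1}$ from \eqref{estimate-eta}, the Cauchy estimate on a polydisk in $x_k$ immediately gives the same bound (up to constants depending only on $m$) for all $m$-th order derivatives, after shrinking the parameter neighborhood slightly. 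One then passes to $\omega_{k,n}^{\pm}$ via \eqref{omega-eta}. Part~(c) is handled identically: $z_k^{\pm}\circ\psi_k$ is holomorphic in $x_k$ (and polynomial in $t$), the $C^0$ bound on $\wt{\gamma}_k^{\pm}$ is $\le 1$ since $|z_k^{\pm}\circ\psi_k|\ge\varepsilon''$ there, and Cauchy estimates do the rest.

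Your approach---differentiating the integral representation of $\chi_{k,n}^+$ under the integral sign in~(b), and computing $t$- and $x_k$-derivatives of powers by hand in~(c)---is more explicit and avoids the holomorphicity observation, at the cost of tracking polynomial-in-$n$ factors and then absorbing them with the geometric decay. Both arguments are valid; the paper's buys brevity by exploiting that holomorphic dependence converts a single $C^0$ bound into bounds on all derivatives for free. Part~(a) is the same in both treatments.
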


\begin{proof}
	\leavevmode
	\begin{enumerate}
		\item follows from the explicit formula in Lemma \ref{omega-lemma1}
			and Hypothesis \ref{hyp:separate}.

		\item By \eqref{estimate-eta}, we have, for some uniform constant $C$
			\[
				\|\eta_{k,n}^{\pm}\|_{C^0(\Omega_{k,\varepsilon''})}\leq C\left(\frac{2}{\varepsilon''}\right)^{n-1}.
			\]
			Hence
			\[
				\|\psi_k^*\eta_{k,n}^{\pm}\|_{C^0(\wt{\Omega}_k)}\leq 2C\left(\frac{2}{\varepsilon''}\right)^{n-1}.
			\]
			Observe that $\psi_k$ depends holomorphically on $\tau_k$.  Also,
			$\eta_{k,n}^{\pm}$ depends holomorphically on $x_k$ ($\omega_{k,n}^{\pm}$
			does not).  Hence for $z\in\wt{\Omega}_k$, $\psi_k^*\eta_{k,n}^{\pm}(z)$
			depends holomorphically on $x_k$. By Cauchy Estimate, restricting the
			parameter $\x$ to a smaller neighborhood of $\cv{\x}$,we have
			\[
				\|d^m\psi_k^*\eta_{k,n}^{\pm}\|_{C^0(\wt{\Omega}_k)}\leq C(m)\left(\frac{2}{\varepsilon''}\right)^{n-1}
			\]
			for some uniform constants $C(m)$, where $d^m$ denotes the $m$-th order
			differential with respect to $x_k$. (b) follows from \eqref{omega-eta}.

		\item Since
			$\psi_k(\wt{\gamma}_k^{\pm})\subset\psi_k(\wt{\Omega}_k)\subset\Omega_{k,\varepsilon''}$,
			we have $|z_k^{\pm}\circ\psi_k|\geq \varepsilon''$ on
			$\wt{\gamma}_k^{\pm}$.  Hence
			\[
				\left\|\left(\frac{t^2}{\rho z_k^{\pm}\circ\psi_k}\right)^{n-1}\right\|_{C^0(\wt{\gamma}_k^{\pm})}\leq 1.
			\]
			Since $z_k^{\pm}\circ\psi_k(z)$ depends holomorphically on $x_k$, we
			obtain uniform estimates of the derivatives by Cauchy Estimate.
	\end{enumerate}
\end{proof}

\medskip

\begin{proof}[Proof of Lemma \ref{omega-lemma6}]
	We write $\wt{\omega}_1$ for the first term in the definition of
	$\wt{\omega}$ and $\wt{\omega}_2$ for the second term (the sum for $n\geq
	2$).
	\begin{enumerate}
		\item $(\psi^*\wt{\omega}_1)(\x)\in C^0(\wt{\Omega})$ and is a smooth
			function of $\x$. This follows from Claim \ref{claim-omega}(a) and
			Proposition \ref{prop:smooth1}.  $(\psi^*\wt{\omega}_2)(\x,\bm\lambda)\in
			C^0(\wt{\Omega})$ and is a smooth function of $\x$ and $\bm\lambda$.
			This follows from Claim \ref{claim-omega}(b), Proposition
			\ref{prop:smooth1} and the fact that the bilinear operator
			\begin{equation} \label{eq-bilinear-op1}
				\big(\eta,\bm\lambda)\mapsto \left(\sum_{n=2}^{\infty}
					\left(\frac{2\rho}{\varepsilon''}\right)^{n-1}
					(\lambda_{k,n}^+\eta_{k,n}^+ 
					+\lambda_{k,n}^-\eta_{k,n}^-)
				\right)_{k\in\Z}
			\end{equation}
			is bounded from $\big(\bigoplus_{(k,n,s)\in\Lambda}
			C^0(\wt{\Omega}_k)\big)_{\infty} \times\ellinf$ to $C^0(\wt{\Omega})$.
			(Since $\rho\leq\varepsilon''/4$, it has norm at most 2).  This proves
			Lemma \ref{omega-lemma6}(a).

		\item Lemma \ref{omega-lemma6}(b) follows from Claim \ref{claim-omega}(c),
			Proposition \ref{prop:smooth1} and the fact that the bilinear operator
			\begin{equation}
				\label{eq-bilinear2}
				(\bm f,\eta)\mapsto\left(\frac{-1}{2\pi\ii}\int_{\wt{\gamma}_{k+s}^{-s}}f_{k+s,n}^{-s}\eta\right)_{(k,n,s)\in\Lambda}
			\end{equation}
			is bounded from $\big(\bigoplus_{(k,n,s)\in\Lambda}C^0(\wt{\gamma}_k^s)\big)_{\infty}\times C^0(\wt{\Omega})$ to $\ellinf$.
	\end{enumerate}
\end{proof}

\section{Asymptotic behavior}\label{sec:asymptotic}

Assume that we are given two configurations $(q_k)$ and $(q_k')$ such that
$q_k=q'_k$ for all $k\geq 0$.  In this section we prove that the holomorphic
1-form $\omega'$ and the parameters $\x'$ are asymptotic to $\omega$ and $\x$.
These results were used in Section~\ref{sec:asymptotic-immersion} to prove the
asymptotic behaviors of the minimal surfaces.

\subsection{Asymptotic behavior of \texorpdfstring{$\omega$}{[omega]}}\label{ssec:asymptotic-omega}

For any infinite set $K$ equipped with a weight function
$\sigma:K\to[1,\infty)$, if $(V_k)_{k\in K}$ is a sequence of normed spaces, we
define
\[
	\big(\bigoplus_{k\in K}V_k\big)_{\infty,\sigma}=\{\x\in\prod_{k\in K}V_k:\|\x\|_{\infty,\sigma}=\sup_{k\in K} \sigma(k)\|x_k\|<\infty\}.
\]
If $V_k = \C^n$ for all $k \in K$, then we simply use the notation
$\ell^{\infty,\sigma}(K)$.  The argument $K$ will be omitted if it is clear in
the context.

\medskip

Let $\cv{\x}$ and $\cv{\x}'$ be the central values corresponding to the
configurations $(q_k)$ and $(q'_k)$, as given by \eqref{eq:central-value}.  Our
goal is to compare $\omega[t,\x]$ to $\omega[t,\x']$ for $\x,\x'$ in a
neighborhood of $\cv{\x},\cv{\x}'$.  For this purpose, we replace the
definition of $\wt\Omega_k$ and $\wt{\Omega}$ in Section \ref{ssec:dh} by
\[
	\wt{\Omega}_k=\T\setminus\big(\overline{D}(0,\varepsilon')\cup \overline{D}(\cv{\wt{v}}_k,\varepsilon')\cup \overline{D}(\cv{\wt{v}}'_k,\varepsilon')\big)
	\quad\text{and} \quad
	\wt{\Omega}=\bigsqcup_{k\in\Z}\wt{\Omega}_k.
\]
Now $(\psi^*\omega)[t,\x]$ and $(\psi^*\omega)[t,\x']$ are both defined on the
same fixed domain $\wt{\Omega}$ so we can compare them.

\medskip

Fix $\delta>1$ and define the weight $\sigma:\Z\to[1,\infty)$ by $\sigma(k)=1$
if $k\leq 0$ and $\sigma(k)=\delta^k$ if $k\geq 0$.  We extend this weight to
$\Lambda$ by $\sigma(k,n,s)=\sigma(k)$ for all $(k,n,s)\in\Lambda$.  We have
$\cv{x}_k=\cv{x}_k'$ for all $k\geq 0$ so
$\cv{\x}'-\cv{\x}\in\ell^{\infty,\sigma}$.  It will be convenient to
write 
$$\Delta\x=\x'-\x.$$
We define the weighted space
$C^{0,\sigma}(\wt{\Omega})$ (not to be confused with a H\"older space) by
\[
	C^{0,\sigma}(\wt{\Omega})=\big(\bigoplus_{k\in\Z}C^0(\wt{\Omega}_k)\big)_{\infty,\sigma}
\]
So functions in $C^{0,\sigma}(\wt{\Omega})$ decay like $\delta^{-k}$ in
$\wt{\Omega}_k$ as $k\to+\infty$.

\begin{proposition}
	\label{prop:omega-decay}
	For $t$ small enough, $\x$ in an $\ell^{\infty}$-neighborhood of $\cv{\x}$ and
	$\Delta\x$ in an $\ell^{\infty,\sigma}$ neighborhood of $\Delta\cv\x=\cv\x'-\cv\x$,
	\[
		(\psi^*\omega)[t,\x+\Delta\x]-(\psi^*\omega)[t,\x] \in
		C^{0,\sigma}(\wt{\Omega})
	\]
	and depends smoothly on $t$, $\x\in\ell^{\infty}$ and
	$\Delta\x\in\ell^{\infty,\sigma}$.
\end{proposition}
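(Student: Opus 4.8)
The plan is to re-run the fixed-point construction of $\omega$ from Section~\ref{sec:omega} inside the weighted spaces, tracking the difference of the Cauchy-data sequences $\bm\lambda$. Recall that $\omega[t,\x]=\wt\omega(\x,\bm\lambda(t,\x))$, where $\bm\lambda(t,\x)\in\ellinf$ is the unique fixed point of the contraction $\bm\lambda\mapsto\bm L(t,\x,\bm\lambda)$. Since $q_k=q'_k$ for all $k\ge 0$, the central values satisfy $\cv{x}_k=\cv{x}'_k$ for $k\ge 0$, so $\Delta\cv\x\in\ell^{\infty,\sigma}$. Writing $\x'=\x+\Delta\x$, $\bm\lambda=\bm\lambda(t,\x)$, $\bm\lambda'=\bm\lambda(t,\x')$ and $\Delta\bm\lambda=\bm\lambda'-\bm\lambda$, the statement reduces to showing that $\Delta\bm\lambda\in\ell^{\infty,\sigma}$ with $\|\Delta\bm\lambda\|_{\infty,\sigma}\le C\|\Delta\x\|_{\infty,\sigma}$, and that it depends smoothly on $(t,\x,\Delta\x)$; then $(\psi^*\omega)[t,\x']-(\psi^*\omega)[t,\x]=\psi^*\wt\omega(\x',\bm\lambda')-\psi^*\wt\omega(\x,\bm\lambda)$ lies in $C^{0,\sigma}(\wt\Omega)$ by the very estimates that prove Lemma~\ref{omega-lemma6}.

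The structural feature making the weight manageable is that the construction is tridiagonal in $k$: the restriction $\wt\omega(\x,\bm\lambda)|_{T_k}$ depends only on $x_k$ and on the coefficients $\lambda_{k,n}^{\pm}$, while $L_{k,n}^{+}$ (resp.\ $L_{k,n}^{-}$) is an integral over $\partial D_{k+1}^{-}$ (resp.\ $\partial D_{k-1}^{+}$) of $\wt\omega$, hence depends only on $x_{k\pm1}$, the coefficients indexed by $k\pm 1$, and $t$. Because $\sigma(k\pm1)\le\delta\,\sigma(k)$, this coupling costs only the bounded factor $\delta$. Concretely, the weighted analogues of Claim~\ref{claim-omega} hold verbatim (the derivative bounds there are already uniform in $k$), and the bilinear operators \eqref{eq-bilinear-op1} and \eqref{eq-bilinear2} remain bounded between the corresponding weighted spaces, with norms enlarged by at most $\delta$. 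Consequently $\bm\lambda\mapsto\bm L(t,\x,\bm\lambda)$ is Lipschitz on $\ell^{\infty,\sigma}$ with constant $\le C\delta t^2/(\rho\varepsilon)$, exactly as in Lemma~\ref{omega-lemma4} but with one extra factor of $\delta$.

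Now subtract the two fixed-point equations:
\[
	\Delta\bm\lambda=\bigl(\bm L(t,\x',\bm\lambda')-\bm L(t,\x,\bm\lambda')\bigr)+\bigl(\bm L(t,\x,\bm\lambda')-\bm L(t,\x,\bm\lambda)\bigr).
\]
The second term is bounded in $\ell^{\infty,\sigma}$ by $\tfrac{C\delta t^2}{\rho\varepsilon}\|\Delta\bm\lambda\|_{\infty,\sigma}$. The first term is the variation of $\bm L$ in the $\x$-variable at fixed $\bm\lambda'$; using the uniformly bounded $\x$-derivatives of $\wt\omega$ from the weighted Claim~\ref{claim-omega}, the $\ellinf$-bound on $\bm\lambda'$, and the geometric factor $(t^2/\rho\varepsilon)^{n-1}$ coming from the integrand, it is bounded in $\ell^{\infty,\sigma}$ by $C\|\Delta\x\|_{\infty,\sigma}$. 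Choosing $t$ small enough — the threshold now depends on the fixed $\delta$ — so that $C\delta t^2/(\rho\varepsilon)<\tfrac12$, we may solve for $\|\Delta\bm\lambda\|_{\infty,\sigma}\le 2C\|\Delta\x\|_{\infty,\sigma}$. Finally, $\psi^*\wt\omega(\x',\bm\lambda')-\psi^*\wt\omega(\x,\bm\lambda)$ lies in $C^{0,\sigma}(\wt\Omega)$ because on each $\wt\Omega_k$ it is controlled by $C(|\Delta x_k|+\|\Delta\bm\lambda_k\|)\le C'\sigma(k)^{-1}\|\Delta\x\|_{\infty,\sigma}$; here one splits the tail $\wt\omega_2$ into $\sum_n\rho^{n-1}(\Delta\lambda^{\pm}_{k,n})(\omega')^{\pm}_{k,n}$ and $\sum_n\rho^{n-1}\lambda^{\pm}_{k,n}\bigl((\omega')^{\pm}_{k,n}-\omega^{\pm}_{k,n}\bigr)$, the first handled by $\Delta\bm\lambda\in\ell^{\infty,\sigma}$ and the second by the $\tau_k$-smoothness of $\omega^{\pm}_{k,n}$ from Claim~\ref{claim-omega}(b) together with $\Delta\tau_k=O(\sigma(k)^{-1})$. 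Smoothness in $(t,\x,\Delta\x)$ then follows along the lines of Lemma~\ref{omega-lemma6}: a weighted version of Proposition~\ref{prop:smooth1} (its hypothesis of uniformly bounded derivatives being insensitive to the weight) shows the fixed-point map is smooth between the weighted spaces, and since it is a uniform contraction its fixed point $\Delta\bm\lambda$, hence also $\psi^*\wt\omega(\x',\bm\lambda')-\psi^*\wt\omega(\x,\bm\lambda)$, depends smoothly on $(t,\x,\Delta\x)$.

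The main obstacle is purely the bookkeeping of weighted operator norms: one must check that every estimate in Section~\ref{sec:omega} that was uniform in $k$ survives multiplication by $\sigma$ at the cost of the bounded ratio $\delta=\sigma(k\pm1)/\sigma(k)$, and that this factor is absorbed by the smallness of $t^2$ in the contraction. Everything else is a routine replay of the arguments of Section~\ref{sec:omega} in $\ell^{\infty,\sigma}$ and $C^{0,\sigma}(\wt\Omega)$.
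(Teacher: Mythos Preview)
Your approach is essentially the paper's: both rerun the fixed-point construction in the weighted space $\ell^{\infty,\sigma}$, exploiting the tridiagonal coupling and the bounded ratio $\sigma(k\pm1)/\sigma(k)\le\delta$. The paper packages this as the map $\Delta\bm L(t,\x,\Delta\x,\bm\lambda,\Delta\bm\lambda)=\bm L(t,\x+\Delta\x,\bm\lambda+\Delta\bm\lambda)-\bm L(t,\x,\bm\lambda)$, shown in a separate Lemma~\ref{decay-lemma1} to take values in $\ell^{\infty,\sigma}$ and to be a contraction there in $\Delta\bm\lambda$; its fixed point then lies in $\ell^{\infty,\sigma}$ and coincides with $\bm\mu=\bm\lambda'-\bm\lambda$ by uniqueness of the fixed point in $\ell^{\infty}$.

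One presentational slip: when you subtract the two fixed-point equations and ``solve for'' $\|\Delta\bm\lambda\|_{\infty,\sigma}\le 2C\|\Delta\x\|_{\infty,\sigma}$, the inequality $\|\Delta\bm\lambda\|_{\infty,\sigma}\le A+\tfrac12\|\Delta\bm\lambda\|_{\infty,\sigma}$ only bootstraps if you already know $\|\Delta\bm\lambda\|_{\infty,\sigma}<\infty$, which is exactly the thing to be proved. The clean fix---what the paper does---is to view the right-hand side as a self-map of $\ell^{\infty,\sigma}$ in the variable $\Delta\bm\lambda$, apply the contraction mapping theorem \emph{there}, and then identify the resulting fixed point with the $\ell^{\infty}$-difference by uniqueness in the larger space. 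For the smoothness claim, what you call ``a weighted version of Proposition~\ref{prop:smooth1}'' is really a statement about \emph{differences}: if each $f_k$ has uniformly bounded derivatives then $(\x,\Delta\x)\mapsto\big(f_k(x_k+\Delta x_k)-f_k(x_k)\big)_k$ is smooth from $U^{\infty}\times U^{\infty,\sigma}$ to $V^{\infty,\sigma}$. The paper states and proves this separately (Proposition~\ref{prop:smooth2} and Corollary~\ref{corollary-smooth2}); it does not follow from Proposition~\ref{prop:smooth1} applied componentwise, since smoothness in $\x\in\ell^{\infty}$ (not $\ell^{\infty,\sigma}$) must be established simultaneously.
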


\begin{proof}
	Recall that $\bm\lambda(t,\x)$ is given by Lemma \ref{omega-lemma4} and
	define
	\[
		\bm\mu(t,\x,\Delta\x)=\bm\lambda(t,\x+\Delta\x)-\bm\lambda(t,\x)\in\ell^{\infty}(\Lambda).
	\]
	Our goal is to prove that $\bm\mu(t,\x,\Delta\x)\in\ell^{\infty,\sigma}(\Lambda)$.

	Let
	\[
		\Delta\bm L(t,\x,\Delta\x,\bm\lambda,\Delta\bm\lambda)=\bm L(t,\x+\Delta\x,\bm\lambda+\Delta\bm\lambda)-\bm L(t,\x,\bm\lambda).
	\]
	Then
	\[
		\Delta\bm L(t,\x,\Delta\x,\bm\lambda(t,\x),\bm\mu(t,\x,\Delta\x))
		=\bm L(t,\x+\Delta\x,\bm\lambda(t,\x+\Delta\x))-\bm L(t,\x,\bm\lambda(t,\x))\\
		=\bm\mu(t,\x,\Delta\x).
	\]
	In other words, $\bm\mu(t,\x,\Delta\x)$ is a fixed point of $\Delta\bm L$ with
	respect to the $\Delta{\bm\lambda}$ variable.

	Define
	\[
		\Delta\wt{\bm L}(t,\x,\Delta\x,\eta,\Delta\eta)=\wt{\bm
		L}(t,\x+\Delta\x,\eta+\Delta\eta)-\wt{\bm L}(t,\x,\eta)
	\]
	and
	\[
		\bm H(\x,\Delta{\x},\bm\lambda,\Delta{\bm\lambda})=(\psi^*\wt{\omega})(\x+\Delta{\x},\bm\lambda+\Delta{\bm\lambda})-(\psi^*\wt{\omega})(\x,\bm\lambda).
	\]
	Recalling \eqref{eq-F-wtF}, we have
	\[
		\Delta\bm L(t,\x,\Delta{\x},\bm\lambda,\Delta{\bm\lambda})=\Delta \wt{\bm L}\big(t,\x,\Delta{\x},(\psi^*\wt{\omega})(\x,\bm\lambda),
		\bm H(\x,\Delta{\x},\bm\lambda,\Delta{\bm\lambda})\big).
	\]
	By Lemma \ref{decay-lemma1} below, $\Delta\bm L \in \ell^{\infty,\sigma}$ is
	smooth with respect to $\x, \bm\lambda \in\ellinf$ and $\Delta{\x},
	\Delta\bm\lambda \in \ell^{\infty,\sigma}$, and is contracting with respect
	to $\Delta{\bm\lambda}$, so its unique fixed point $\bm\mu(t,\x,\Delta{\x})$
	is in $\ell^{\infty,\sigma}$ and depends smoothly on $t,\x$,$\Delta{\x}$ in
	their respective spaces.  Finally, we have by definition
	\begin{align*}
 		(\psi^*\omega)[t,\x+\Delta{\x}]-(\psi^*\omega)[t,\x]
 		&=(\psi^*\wt{\omega})(\x+\Delta{\x},\bm\lambda(t,\x+\Delta{\x}))-(\psi^*\wt{\omega})(\x,\bm\lambda(t,\x))\\
		&=\bm H\big(\x,\Delta{\x},\bm\lambda(t,\x),\bm \mu(t,\x,\Delta{\x})\big)
	\end{align*}
	so Proposition \ref{prop:omega-decay} follows from Lemma \ref{decay-lemma1}(a).
\end{proof}

\begin{lemma} \label{decay-lemma1}
	\leavevmode
	\begin{enumerate}
		\item For $\x$ in an $\ell^{\infty}$-neighborhood of $\cv{\x}$, $\Delta{\x}$ in an $\ell^{\infty,\sigma}$-neighborhood of $\Delta\cv{\x}$, $\bm\lambda\in\ellinf$ and $\Delta{\bm\lambda}\in\ell^{\infty,\sigma}$,
			\[
				\bm H(\x,\Delta{\x},\bm\lambda,\Delta{\bm\lambda})\in C^{0,\sigma}(\wt{\Omega})
			\]
			and depends smoothly on $\x,\Delta{\x},\bm\lambda,\Delta{\bm\lambda}$.

		\item For $t$ in a neighborhood of $0$, $\x$ in an
			$\ell^{\infty}$-neighborhood of $\cv{\x}$, $\Delta{\x}$ in an
			$\ell^{\infty,\sigma}$-neighborhood of $\Delta\cv{\x}$, $\eta\in
			C^0(\wt{\Omega})$ and $\Delta{\eta}\in C^{0,\sigma}(\wt{\Omega})$,
			\[
				\Delta\wt{\bm L}(t,\x,\Delta{\x},\eta,\Delta{\eta})\in \ell^{\infty,\sigma}
			\]
			and depends smoothly on $t,\x,\Delta{\x},\eta,\Delta{\eta}$.

		\item For $t$ small enough, $\Delta\bm L$ is contracting with respect to
			$\Delta{\bm\lambda}$, as a map from $\ell^{\infty,\sigma}$ to itself.
	\end{enumerate}
\end{lemma}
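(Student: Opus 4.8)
All three statements are \emph{increments} of the smooth maps already built in Section~\ref{sec:omega}: $\bm H$ is the increment of $\psi^*\wt\omega$, while $\Delta\wt{\bm L}$ and $\Delta\bm L$ are increments of $\wt{\bm L}$ and $\bm L$. So the plan is to reuse verbatim the decompositions from the proofs of Lemmas~\ref{omega-lemma4} and~\ref{omega-lemma6} and only track how the weight $\sigma$ propagates. The single property of $\sigma$ that is needed is that
\[
	\sigma(k)\le\delta\,\sigma(k\pm 1)\qquad\text{for all }k\in\Z,
\]
so that an index shift by $1$ (occurring in $\wt{\bm L}$, and in the $\bm\lambda$-dependence of $\bm L$) costs at most a factor $\delta$ in the $\sigma$-weighted norms; together with the elementary fact that an increment $f(x+h)-f(x)$ vanishes at $h=0$, which is exactly what lets a $\sigma$-weighted displacement produce a $\sigma$-weighted output.

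The first step is to record a weighted ``increment'' analogue of Proposition~\ref{prop:smooth1}: if $f_k\colon B(0,r)\subset U_k\to V_k$ are smooth with uniformly bounded derivatives, then
\[
	(\x,\Delta\x)\;\longmapsto\;\bigl(f_k(x_k+\Delta x_k)-f_k(x_k)\bigr)_{k}
\]
is smooth from an $\ellinf$-neighbourhood of a central value times an $\ell^{\infty,\sigma}$-neighbourhood of $\Delta\cv\x$ into $\bigl(\bigoplus_k V_k\bigr)_{\infty,\sigma}$; this follows from Taylor's formula with integral remainder exactly as in Proposition~\ref{prop:smooth1}, the bound $\|f_k(x_k+\Delta x_k)-f_k(x_k)\|\le C(1)\|\Delta x_k\|$ (and its analogues for the higher derivatives) absorbing the weight because $\sigma(k)\|\Delta x_k\|\le\|\Delta\x\|_{\infty,\sigma}$. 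I will also use the trivial companion observation that a bounded bilinear operator $B(u,v)$ with $u$ in a $\sigma$-weighted direct sum and $v$ in an unweighted space has $\sigma$-weighted output, and that this persists (with an extra factor $\delta$) when $B$ shifts the index by $\pm1$, as in the operators~\eqref{eq-bilinear-op1} and~\eqref{eq-bilinear2}.

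Given these tools, parts~(a) and~(b) are immediate. For~(a) I write $\psi^*\wt\omega=\psi^*\wt\omega_1+\psi^*\wt\omega_2$ as in the proof of Lemma~\ref{omega-lemma6} and expand each increment by the bilinear identity $B(u',v')-B(u,v)=B(u'-u,v')+B(u,v'-v)$. The part $\psi^*\wt\omega_1=\psi_k^*\omega_{k,0_k,v_k}$ is diagonal with uniformly bounded derivatives (Claim~\ref{claim-omega}(a)), so its increment lies in $C^{0,\sigma}(\wt\Omega)$ by the increment lemma. The part $\psi^*\wt\omega_2$ is the bounded bilinear operator~\eqref{eq-bilinear-op1} evaluated at a diagonal, $\x$-dependent kernel with uniformly bounded derivatives (Claim~\ref{claim-omega}(b)) and at $\bm\lambda$; the bilinear identity splits its increment into a term linear in the kernel's increment --- which lies in the $\sigma$-weighted direct sum, its $k$-th block being bounded by $C\|\Delta x_k\|$ --- and a term linear in $\Delta\bm\lambda\in\ell^{\infty,\sigma}$, so both summands lie in $C^{0,\sigma}(\wt\Omega)$; smoothness is inherited from the increment lemma and boundedness of $B$. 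Part~(b) is entirely parallel: by~\eqref{eq-bilinear2}, $\wt{\bm L}$ is the bounded bilinear operator evaluated at the kernel $\bigl((t^2/\rho\,z_k^{\pm}\circ\psi_k)^{n-1}\bigr)$ (smooth in $(t,x_k)$ with uniformly bounded derivatives, Claim~\ref{claim-omega}(c)) and at $\eta$; the same expansion, now carrying the $k\mapsto k\pm1$ shift through $\sigma(k)\le\delta\,\sigma(k\pm1)$, places $\Delta\wt{\bm L}$ in $\ell^{\infty,\sigma}$ with smooth dependence on all arguments.

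Part~(c) I would argue directly rather than via the increment lemma. Since $\wt\omega(\x,\cdot)$, hence $\bm L(t,\x,\cdot)$, is affine, the map $\Delta\bm\lambda\mapsto\Delta\bm L(t,\x,\Delta\x,\bm\lambda,\Delta\bm\lambda)$ is affine and its linear part is the homogeneous part $\Lambda_{\x+\Delta\x}$ of the affine map $\bm L(t,\x+\Delta\x,\cdot)$. Revisiting the contraction estimate in the proof of Lemma~\ref{omega-lemma4} and noting that the $\lambda_{k,n}^{+}$-dependence of $\bm L$ enters only through the integral over $\partial D_{k+1}^-$ (hence only through $\bm\lambda_{k+1}$), and $\lambda_{k,n}^-$ only through $\bm\lambda_{k-1}$, that estimate upgrades to
\[
	\|\Lambda_{\x+\Delta\x}\bm\lambda\|_{\infty,\sigma}\;\le\;\frac{C\ell\,\delta\,t^2}{\pi\rho\varepsilon}\,\|\bm\lambda\|_{\infty,\sigma},
\]
the extra $\delta$ coming from $\sigma(k)\le\delta\,\sigma(k-1)$. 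For $t$ small enough (depending on $\delta$) the constant is $<1$, so $\Delta\bm L$ is a contraction in $\Delta\bm\lambda$, uniformly in the remaining variables. The main obstacle is really just the first step: formulating the increment lemma and the index-shift bookkeeping in a form general enough to cover both $\bm H$ and $\wt{\bm L}$ without re-deriving the estimates of Section~\ref{sec:omega}; once this is set up, (a)--(c) are the manipulations of Lemmas~\ref{omega-lemma4} and~\ref{omega-lemma6} carried out in the $\sigma$-weighted spaces.
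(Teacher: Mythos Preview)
Your approach is essentially the paper's own: the ``increment lemma'' you formulate is exactly Proposition~\ref{prop:smooth2}, and your treatment of (a)--(c) via the decomposition $\wt\omega=\wt\omega_1+\wt\omega_2$, Claim~\ref{claim-omega}, the bilinear operators~\eqref{eq-bilinear-op1} and~\eqref{eq-bilinear2}, and the shift bound $\sigma(k)\le\delta\,\sigma(k\pm1)$ matches the paper's proof line by line, including the contraction estimate for (c) with the extra factor~$\delta$.

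One point you should tighten: your increment lemma is stated for $f_k$ defined on a single ball $B(0,r)$, with the Taylor bound $\|f_k(x_k+\Delta x_k)-f_k(x_k)\|\le C(1)\|\Delta x_k\|$ applied uniformly in $k$. For $k<0$ the central values $\cv x_k$ and $\cv x'_k=\cv x_k+\Delta\cv x_k$ differ and may be far apart, so $x_k$ and $x_k+\Delta x_k$ need not lie in a common ball on which the mean value inequality applies. The paper handles this via Corollary~\ref{corollary-smooth2}: split $K=K^+\cup K^-$, use your Taylor argument on $K^+=\N$ where the centers agree, and on $K^-$ simply note that $\sigma(k)=1$ there, so ordinary $\ellinf$-boundedness (Proposition~\ref{prop:smooth1} applied to each configuration separately) suffices. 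This is a bookkeeping patch, not a change of strategy.
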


We need the following

\begin{proposition}
	\label{prop:smooth2} Under the same notations and hypothesis as in
	Proposition \ref{prop:smooth1}, let $\sigma:K\to[1,\infty)$ be an arbitrary
	weight.  Let $U^{\infty,\sigma}=(\bigoplus_{k\in K}U_k)_{\infty,\sigma}$ and
	$V^{\infty,\sigma}=(\bigoplus_{k\in K}V_k)_{\infty,\sigma}$.  Define for
	$\x\in B(\bm 0,r/2)\subset U^\infty$ and $\Delta{\x}\in B(\bm 0,r/2)\subset
	U^{\infty,\sigma}$
	\[
		\Delta\bm f(\x,\Delta\x)=\big(f_k(x_k+\Delta x_k)-f_k(x_k)\big)_{k\in K}.
	\]
	Then $\Delta\bm f(\x,\Delta{\x})\in V^{\infty,\sigma}$, $\Delta \bm f$ is
	smooth and
	\[
		d(\Delta\bm f)(\x,\Delta\x)(\bm h,\Delta{\bm h})=\big(df_k(x_k+\Delta{x}_k)(h_k+\Delta{h}_k)-df_k(x_k)h_k\big)_{k\in K}.
	\]
\end{proposition}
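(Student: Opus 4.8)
The plan is to reduce Proposition~\ref{prop:smooth2} to Proposition~\ref{prop:smooth1} through the Hadamard-type identity
\[
	f_k(x_k+\Delta x_k)-f_k(x_k)=\Big(\int_0^1 df_k(x_k+s\,\Delta x_k)\,ds\Big)[\Delta x_k],
\]
valid whenever $x_k,\Delta x_k\in B(0,r/2)$, so that $x_k+s\,\Delta x_k\in B(0,r)$ for all $s\in[0,1]$ (this is where the shrinking from $r$ to $r/2$ is used). Writing $g_k(x_k,y_k)=\int_0^1 df_k(x_k+sy_k)\,ds\in L(U_k,V_k)$, the right-hand side is $g_k(x_k,\Delta x_k)[\Delta x_k]$. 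This already yields the norm estimate: since $\|g_k(x_k,y_k)\|\le\sup_{s\in[0,1]}\|df_k(x_k+sy_k)\|\le C(1)$, we get $\sigma(k)\|f_k(x_k+\Delta x_k)-f_k(x_k)\|\le C(1)\,\sigma(k)\|\Delta x_k\|\le C(1)\|\Delta\x\|_{\infty,\sigma}$, so indeed $\Delta\bm f(\x,\Delta\x)\in V^{\infty,\sigma}$.

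For smoothness, I would first check that the family $g_k:B(0,r/2)\times B(0,r/2)\subset U_k\times U_k\to L(U_k,V_k)$ (with $U_k\times U_k$ carrying the sup norm, so this is a ball of radius $r/2$) has uniformly bounded derivatives of all orders: differentiating under the integral sign and applying the Leibniz and chain rules together with the hypothesis $\|d^m f_k\|\le C(m)$ on $B(0,r)$ gives $\|d^m g_k(x_k,y_k)\|\le C'(m)$ with constants independent of $k$. Proposition~\ref{prop:smooth1}, applied with $U_k$ replaced by $U_k\times U_k$ and $V_k$ by $L(U_k,V_k)$, then shows that the induced map $\bm g(\x,\Delta\x)=(g_k(x_k,\Delta x_k))_{k\in K}$ is smooth from $B(\bm 0,r/2)\subset U^\infty\times U^\infty$ to $\big(\bigoplus_k L(U_k,V_k)\big)_\infty$, with componentwise differential. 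Since $\sigma\ge 1$ the inclusion $U^{\infty,\sigma}\hookrightarrow U^\infty$ is bounded linear, so $\bm g$ is also smooth as a function of $\x\in U^\infty$ and $\Delta\x\in U^{\infty,\sigma}$.

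Next I would introduce the evaluation map
\[
	E:\big(\textstyle\bigoplus_k L(U_k,V_k)\big)_\infty\times U^{\infty,\sigma}\to V^{\infty,\sigma},\qquad E\big((T_k)_k,(w_k)_k\big)=(T_k[w_k])_k,
\]
which is well defined because $\sigma(k)\|T_k[w_k]\|\le\|(T_k)\|_\infty\,\sigma(k)\|w_k\|$, and is bounded bilinear, hence smooth. (Note that the $\sigma$-weight in the target is carried entirely by the second factor; this is why the construction produces a map into the weighted space even though $\bm g$ itself only takes values in an unweighted space — indeed $g_k(x_k,0)=df_k(x_k)\ne 0$ in general.) Then $\Delta\bm f(\x,\Delta\x)=E\big(\bm g(\x,\Delta\x),\Delta\x\big)$ is a composition of smooth maps, hence smooth, and the chain and bilinearity rules give its differential. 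To identify it with the claimed formula it suffices to compose with the bounded projection $V^{\infty,\sigma}\to V_k$: the $k$-th component of $\Delta\bm f$ is $(\x,\Delta\x)\mapsto f_k(x_k+\Delta x_k)-f_k(x_k)$, whose ordinary differential in direction $(\bm h,\Delta\bm h)$ is $df_k(x_k+\Delta x_k)(h_k+\Delta h_k)-df_k(x_k)h_k$; since the components of $d(\Delta\bm f)$ are thereby forced, this is exactly the stated expression.

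The only point that requires genuine care is the uniform control of the higher derivatives of $g_k=\int_0^1 df_k(\,\cdot+s\,\cdot\,)\,ds$ — differentiation under the integral sign combined with the uniform estimates on $d^m f_k$ and the attendant passage from $B(0,r)$ to $B(0,r/2)$ — which is what allows Proposition~\ref{prop:smooth1} to be invoked for the family $(g_k)$. Everything else is the routine transport of that proposition along bounded bilinear maps between the $\ell^\infty$-type direct sums.
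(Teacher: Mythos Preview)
Your argument is correct and takes a genuinely different route from the paper's. The paper proves differentiability directly: it writes down the candidate differential $\bm l$, uses the Mean Value Inequality to check that $\bm l$ is a bounded operator $U^\infty\times U^{\infty,\sigma}\to V^{\infty,\sigma}$, and then expands
\[
\Delta f_k(x_k+h_k,\Delta x_k+\Delta h_k)-\Delta f_k(x_k,\Delta x_k)-l_k(h_k,\Delta h_k)
\]
via Taylor's formula with integral remainder as a difference of two integrals of $d^2 f_k$, which it bounds again by the Mean Value Inequality to get the $o(\|\bm h\|_\infty+\|\Delta\bm h\|_{\infty,\sigma})$ estimate; smoothness is then obtained by induction on the order.

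Your approach instead factors $\Delta\bm f$ as a bilinear evaluation $E$ applied to $(\bm g(\x,\Delta\x),\Delta\x)$, where $\bm g$ is operator-valued and is shown to be smooth by a direct appeal to Proposition~\ref{prop:smooth1}. This has two pleasant features: it makes transparent \emph{where} the weight lives (it is carried entirely by the second argument of $E$, so $\bm g$ need only land in the unweighted space), and it yields smoothness to all orders in one stroke, with no separate induction, since compositions of smooth maps and bounded bilinear maps are smooth. The price is the extra bookkeeping of the Hadamard integral $g_k$ and the verification that its derivatives are uniformly bounded, which you correctly flag as the only nontrivial point. The paper's direct approach avoids introducing the auxiliary $g_k$ and $E$ but must perform the second-order remainder computation by hand and then induct.
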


\begin{proof}
	By the Mean Value Inequality
	\[
		\sigma(k)\|f_k(x_k+\Delta{x}_k)-f_k(x_k)\|\leq C\sigma(k)\|\Delta{x}_k\|
	\]
	Hence $\Delta\bm f(\x,\Delta{\x})\in V^{\infty,\sigma}$.  Define
	\[
		l_k(h_k,\Delta{h}_k)=df_k(x_k+\Delta{x}_k)(h_k+\Delta{h}_k)-df_k(x_k)h_k
		\quad\text{and}\quad \bm l(\bm h,\Delta{\bm h})=(l_k(h_k,\Delta{h}_k))_{k\in K}.
	\]
	Using the Mean Value Inequality, one easily obtains
	\[
		\sigma(k)\|l_k(h_k,\Delta{h}_k)\|\leq C\sigma(k)\big(\|\Delta{h}_k\|+\|\Delta{x}_k\|\,\|h_k\|\big).
	\]
	Hence $\bm l$ is a bounded operator from $U^\infty \times U^{\infty,\sigma}$ to
	$V^{\infty,\sigma}$.  Using Taylor Formula with integral remainder, we have
	\begin{align*}
		&\Delta f_k(x_k+h_k,\Delta{x}_k+\Delta{h}_k)-\Delta f_k(x_k,\Delta{x}_k)-l_k(h_k,\Delta{h}_k)\\
		=&f_k(x_k+\Delta{x}_k+h_k+\Delta{h}_k)-f_k(x_k+\Delta{x}_k)\\
		&-df_k(x_k+\Delta{x}_k)(h_k+\Delta{h}_k)
		-\left[f_k(x_k+h_k)-f_k(x_k)-df_k(x_k)h_k\right]\\
		=&\int_0^1(1-t)\left[d^2f_k\big(x_k+\Delta{x}_k+t(h_k+\Delta{h}_k)\big)(h_k+\Delta{h}_k)^2
		-d^2f_k(x_k+th_k)h_k^2\right]dt
	\end{align*}
	By the Mean Value Inequality
	\begin{align*}
		&\sigma(k)\|\Delta f_k(x_k+h_k,\Delta{x}_k+\Delta{h}_k)-\Delta f_k(x_k,\Delta{x}_k)-l_k(h_k,\Delta{h}_k)\|\\
		\leq&
		C\sigma(k)\left[ 2\|h_k\|\|\Delta{h}_k\|+\|\Delta{h}_k\|^2
		+(\|\Delta{x}_k\|+\|\Delta{h}_k\|)\|h_k\|^2 \right].
	\end{align*}
	Hence
	\[
		\|\Delta\bm f(\x+\bm h,\Delta{\x}+\Delta{\bm h})-\Delta\bm f(\x,\Delta{\bm x})-\bm l(\bm h,\Delta{\bm h})\|_{\infty,\sigma}=O((\|\bm h\|_{\infty}+\|\Delta{\bm h}\|_{\infty,\sigma})^2)
	\]
	so $\Delta\bm f$ is differentiable with $d(\Delta \bm f)(\x,\Delta{\x})=\bm
	l$. Smoothness follows by induction.
\end{proof}

\medskip

We shall use the following corollary with $K=\Z$, $K^+=\N$ and $K^-=\Z\setminus\N$:

\begin{corollary} \label{corollary-smooth2}
	With the same notation as in Proposition \ref{prop:smooth2}, assume that for
	$k\in K$, $f_k$ is defined in $B(\cv{x}_k,r)\cup B(\cv{x}'_k,r)$ in $U_k$ and
	has uniformly bounded derivatives. Assume that $K$ admits a partition
	$(K^+,K^-)$ such that for all $k\in K^+$, $\cv{x}_k=\cv{x}'_k$, and for all
	$k\in K^-$, $\sigma(k)=1$.  Then for $\x\in B(\cv{\x},r/2)\subset U^\infty$ and
	$\Delta{\x}\in B(\cv{\x}'-\cv{\x},r/2)\subset U^{\infty,\sigma}$, $\Delta\bm
	f(\x,\Delta{\x})\in V^{\infty,\sigma}$ and depends smoothly on $\x$ and $\Delta{\x}$.
\end{corollary}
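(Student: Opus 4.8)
The plan is to cut $K$ into the two pieces $K^+$ and $K^-$ and reduce each case to the unweighted statements already in hand. Since $\sigma\equiv 1$ on $K^-$, the weighted norm there is just the sup norm, so $(\bigoplus_{k\in K}V_k)_{\infty,\sigma}$ splits as the product Banach space $(\bigoplus_{k\in K^+}V_k)_{\infty,\sigma}\times(\bigoplus_{k\in K^-}V_k)_{\infty}$ (with the max norm), and likewise for the $U$'s. A map into such a product is smooth precisely when each component is, so it suffices to prove that $\Delta\bm f$, followed by restriction to $K^+$ resp.\ $K^-$, is smooth into $(\bigoplus_{k\in K^+}V_k)_{\infty,\sigma}$ resp.\ $(\bigoplus_{k\in K^-}V_k)_{\infty}$ — in particular, that it takes values there.

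On $K^+$ the two central values agree, $\cv x_k=\cv x'_k$, so I would simply translate: put $\wh f_k(y)=f_k(\cv x_k+y)$ for $y\in B(0,r)$. These functions have uniformly bounded derivatives, and $\x\in B(\cv\x,r/2)$ together with $(\cv\x'-\cv\x)|_{K^+}=0$ forces $(x_k-\cv x_k)_{k\in K^+}\in B(0,r/2)$ and $\Delta\x|_{K^+}\in B(0,r/2)$ in the appropriate (unweighted, resp.\ $\sigma$-weighted) sequence spaces over $K^+$. Proposition~\ref{prop:smooth2} then applies verbatim to $(\wh f_k)_{k\in K^+}$ and gives that $\Delta\wh{\bm f}$ is smooth into $(\bigoplus_{k\in K^+}V_k)_{\infty,\sigma}$; precomposing with the continuous affine maps $\x\mapsto(x_k-\cv x_k)_{k\in K^+}$ and $\Delta\x\mapsto\Delta\x|_{K^+}$ and using $\wh f_k\big((x_k-\cv x_k)+\Delta x_k\big)-\wh f_k(x_k-\cv x_k)=f_k(x_k+\Delta x_k)-f_k(x_k)$ finishes this case.

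The $K^-$ case is where I expect the only genuine difficulty: there $\cv x_k$ and $\cv x'_k$ may differ and $f_k$ is only defined on $B(\cv x_k,r)\cup B(\cv x'_k,r)$, a set that need not be connected, so Proposition~\ref{prop:smooth2} cannot be applied directly with mismatched centres. Instead I would introduce the two-variable auxiliary maps $g_k\colon B(0,r/2)\times B(0,r/2)\to V_k$ given by $g_k(y,z)=f_k(\cv x'_k+y+z)-f_k(\cv x_k+y)$, whose domain is now a fixed product of balls about the origin since $\cv x'_k+y+z\in B(\cv x'_k,r)$ and $\cv x_k+y\in B(\cv x_k,r)$. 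All derivatives of $g_k$, and $g_k$ itself, are bounded uniformly in $k$ by the corresponding bounds on $f_k$ — this is exactly where one uses that the $f_k$ themselves, and not only their positive-order derivatives, are uniformly bounded. Treating $U_k\times U_k$ as a single normed space, Proposition~\ref{prop:smooth1} gives that $(\bm y,\bm z)\mapsto(g_k(y_k,z_k))_{k\in K^-}$ is smooth into $(\bigoplus_{k\in K^-}V_k)_{\infty}$; since $\Delta\bm f(\x,\Delta\x)$ restricted to $K^-$ equals $\big(g_k(x_k-\cv x_k,\ \Delta x_k-(\cv x'_k-\cv x_k))\big)_{k\in K^-}$ and the two plugging-in maps are continuous affine maps into the unweighted $U$-space over $K^-$ (using $\sigma\equiv1$ there) landing in the required balls, we get smoothness with values in $(\bigoplus_{k\in K^-}V_k)_{\infty}$. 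Combining the $K^+$ and $K^-$ pieces through the product decomposition above yields the corollary.
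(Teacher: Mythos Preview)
Your proof is correct and follows essentially the same strategy as the paper: split $K$ into $K^+$ and $K^-$, reduce the $K^+$ piece to Proposition~\ref{prop:smooth2} by translating away the common centre $\cv x_k=\cv x'_k$, and handle the $K^-$ piece in the unweighted space using $\sigma\equiv 1$ there. The only cosmetic difference is in the $K^-$ step: the paper simply applies Proposition~\ref{prop:smooth1} twice (once after translating by $\cv x_k$, once by $\cv x'_k$) to get that both $\bm f^-(\x^-)$ and $\bm f^-(\x^-+\Delta\x^-)$ are smooth into $V^\infty$, and subtracts; you instead package the two evaluations into a single auxiliary $g_k(y,z)$ and apply Proposition~\ref{prop:smooth1} once. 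Both routes are equivalent and equally valid.
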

\begin{proof}
	We decompose a sequence $\x=(x_k)_{k\in K}$ as $\x=\x^+ + \x^-$ with $\x^+$
	supported on $K^+$ and $\x^-$ supported on $K^-$.  By Proposition
	\ref{prop:smooth1}, $\bm f^-(\x^-)\in V^\infty$ and $\bm
	f^-(\x^-+\Delta{\x}^-)\in V^\infty$ so $\Delta \bm f^-(\x^-,\Delta{\x}^-)\in
	V^\infty$. Since $\sigma=1$ on $K^-$, $\Delta \bm f^-(\x^-,\Delta{\x}^-)\in
	V^{\infty,\sigma}$.  Since $\cv{\x}^+=(\cv{\x}')^+$, we may use the change of
	variable $\x^+=\cv{\x}^+ + \bm y^+$ and conclude that $\Delta \bm
	f^+(\x^+,\Delta{\x}^+)\in V^{\infty,\sigma}$ by Proposition
	\ref{prop:smooth2}.
\end{proof}

\medskip

\begin{proof}[Proof of Lemma \ref{decay-lemma1}]
	Lemma \ref{decay-lemma1}(a) follows from the following.
	\begin{itemize}
		\item $(\psi^*\wt{\omega}_1)(\x+\Delta{\x})-(\psi^*\wt{\omega}_1)(\x)\in
			C^{0,\sigma}(\wt{\Omega})$ and is a smooth function of $\x$,
			$\Delta{\x}$. This follows from Claim \ref{claim-omega}(a) and Corollary
			\ref{corollary-smooth2}.

		\item
			$(\psi^*\wt{\omega}_2)(\x+\Delta{\x},\bm\lambda)-(\psi^*\wt{\omega}_2)(\x,\bm\lambda)\in
			C^{0,\sigma}(\wt{\Omega})$ and is a smooth function of $\x$, $\Delta{\x}$
			and $\bm\lambda$.  This follows from Claim \ref{claim-omega}(b),
			Corollary \ref{corollary-smooth2} and the fact that the bilinear operator
			\eqref{eq-bilinear-op1} is bounded from
			$\big(\bigoplus_{(k,n,s)\in\Lambda}
			C^0(\wt{\Omega}_k)\big)_{\infty,\sigma} \times\ell^{\infty}(\Lambda)$ to
			$C^{0,\sigma}(\wt{\Omega})$.

		\item $(\psi^*\wt{\omega}_2)(\x+\Delta{\x},\Delta{\bm\lambda})\in
			C^{0,\sigma}(\wt{\Omega})$ and is a smooth function of $\x$, $\Delta{\x}$
			and $\Delta{\bm\lambda}$. This follows from Claim \ref{claim-omega}(b),
			Proposition \ref{prop:smooth1} and the fact that the bilinear operator
			\eqref{eq-bilinear-op1} is bounded from
			$\big(\bigoplus_{(k,n,s)\in\Lambda} C^0(\wt{\Omega}_k)\big)_{\infty}
			\times\ell^{\infty,\sigma}(\Lambda)$ to $C^{0,\sigma}(\wt{\Omega})$.
	\end{itemize}

	Lemma \ref{decay-lemma1}(b) follows from the following.

	\begin{itemize}
		\item $\wt{\bm L}(t,\x+\Delta{\x},\eta)-\wt{\bm
			L}(t,\x,\eta)\in\ell^{\infty,\sigma}(\Lambda)$ and depends smoothly on
			$t,\x,\Delta{\x}$ and $\eta$. This follows from Claim
			\ref{claim-omega}(c), Corollary \ref{corollary-smooth2} and the fact that
			the bilinear operator \eqref{eq-bilinear2} is bounded from
			$\big(\bigoplus_{(k,n,s)\in\Lambda}C^0(\wt{\gamma}_k^s)\big)_{\infty,\sigma}\times
			C^0(\wt{\Omega})$ to $\ell^{\infty,\sigma}(\Lambda)$.  This uses that
			$\frac{\sigma(k)}{\sigma(k\pm 1)}\leq\delta$ and explains our choice of
			the weight $\sigma$.

		\item $\wt{\bm
			L}(t,\x+\Delta{\x},\Delta{\eta})\in\ell^{\infty,\sigma}(\Lambda)$ and
			depends smoothly on $t,\x,\Delta{\x}$ and $\Delta{\eta}$. This follows
			from Claim \ref{claim-omega}(c), Proposition \ref{prop:smooth1} and the
			fact that the bilinear operator \eqref{eq-bilinear2} is bounded from
			$\big(\bigoplus_{(k,n,s)\in\Lambda}C^0(\wt{\gamma}_k^s)\big)_{\infty}\times
			C^{0,\sigma}(\wt{\Omega})$ to $\ell^{\infty,\sigma}(\Lambda)$.
	\end{itemize}

	Finally, we have
	\[
		\Delta \bm L(t,\x,\Delta{\x},\bm\lambda,\Delta{\bm\lambda})-\Delta \bm L(t,\x,\Delta{\x},\bm\lambda,0)=
		\bm L(t,\x+\Delta{\x},\bm\lambda+\Delta{\bm\lambda})-\bm L(t,\x+\Delta{\x},\bm\lambda).
	\]
	Using Estimate \eqref{omega-estimate1} as in the proof of Lemma
	\ref{omega-lemma4},
	\[
		\|\bm L(t,\x+\Delta{\x},\bm\lambda+\Delta{\bm\lambda})-\bm L(t,\x+\Delta{\x},\bm\lambda)\|_{\infty,\sigma}
		\leq \frac{C\ell \delta t^2}{\pi\rho\varepsilon}\|\Delta{\bm\lambda}\|_{\infty,\sigma}
	\]
	so $\Delta \bm L$ is contracting with respect to $\Delta{\bm\lambda}$ for $t$
	small enough.
\end{proof}

\subsection{Asymptotic behavior of the parameters}

Let $\x(t)$ and $\x'(t)$ be the solutions obtained in Section
\ref{sec:proof} from the configurations $(q_k)$ and $(q'_k)$, respectively.

\begin{proposition} \label{prop:parameter-decay}
	For $t$ small enough, $\x'(t)-\x(t)\in\ell^{\infty,\sigma}$.
\end{proposition}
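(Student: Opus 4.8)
The plan is to carry out the construction of Section~\ref{sec:proof} one more time, but now in the weighted Banach space $\ell^{\infty,\sigma}$, feeding in Proposition~\ref{prop:omega-decay} for the part of the equations that depends on $\omega$ and Corollary~\ref{corollary-smooth2} for the part that depends explicitly on the parameters. First I would package the whole construction into a single equation. Regarding $\cal{E}_k$, $\cal{P}_{k,1}$, $\cal{P}_{k,2}$, $\cal{G}_k$ as functions of $(t,\x)$ and collecting the equations solved in Propositions~\ref{prop:regularity1}, \ref{prop:period-problem1} and~\ref{prop:balancing2}, one obtains a smooth map $\Psi_t$, defined on an $\ellinf$-neighbourhood of each of $\cv{\x}$ and $\cv{\x}'$ and with values in $\ellinf$, such that $\Psi_t(\x(t))=0$, $\Psi_t(\x'(t))=0$, and such that for each small $t$ these are the \emph{unique} zeros of $\Psi_t$ in the respective neighbourhoods (uniqueness being inherited from the three Implicit Function Theorem applications, which form a triangular system). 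Note that $\Psi_t$ is literally the same map in the primed and unprimed cases: its components depend only on $(t,\x)$, and the right-hand sides $0$, $2(-1)^k$, $2\tau$, $-2\pi\ii G(q_0;\tau)$ are unchanged since $\tau$ and $q_0=q'_0$ are common to both configurations. Two structural facts are crucial. (i)~At $t=0$ the map is diagonal in $k$, that is $\Psi_0(\x)=(\Psi_{0,k}(x_k))_{k\in\Z}$ for smooth maps $\Psi_{0,k}$ of the $k$-th block $x_k$ alone with uniformly bounded derivatives; this is exactly the content of the computations of $\cal{E}_k(0,\x)$, $\cal{P}_{k,i}(0,\x)$, $\cal{G}_k(0,\bm v)$ in Section~\ref{sec:proof} together with~\eqref{eq:omega0}. (ii)~The differentials $d\Psi_{0,k}(\cv{x}_k)$ and $d\Psi_{0,k}(\cv{x}'_k)$ are invertible, uniformly in $k$; indeed each is block-triangular with the three invertible diagonal blocks exhibited in Propositions~\ref{prop:regularity1}--\ref{prop:balancing2}, and for a block-diagonal operator uniform block-invertibility is equivalent to being an automorphism of $\ellinf$, hence also of $\ell^{\infty,\sigma}$ (the weight merely rescales the blocks). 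Finally, since $q_k=q'_k$ for $k\ge 0$, one reads off from~\eqref{eq:central-value} that $\cv{\x}'-\cv{\x}$ is supported on $k<0$, where $\sigma\equiv 1$; in particular $\cv{\x}'-\cv{\x}\in\ell^{\infty,\sigma}$.

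Next I would set $\Theta(t,\bm w)=\Psi_t(\x(t)+\bm w)-\Psi_t(\x(t))$ and prove that $\Theta$ is a smooth map from a neighbourhood of $(0,\cv{\x}'-\cv{\x})$ in $\R\times\ell^{\infty,\sigma}$ into $\ell^{\infty,\sigma}$. This is the technical heart, and it follows the proof of Lemma~\ref{decay-lemma1} closely. By Cauchy's theorem, as in Section~\ref{sec:omega}, each component of $\Psi_t$ can be written as a contour integral over a fixed contour inside $\wt{\Omega}_k$ of an expression built linearly from $\psi^*\omega$ and from rational functions of $z$ whose coefficients are smooth functions of $x_k$ with uniformly bounded derivatives --- the functions $g_k^{\pm1}$ (with $g_k^{-1}$ bounded by choosing $\alpha_k,\beta_k$ away from the zeros of $g_k$ and of $g'_k$), the symmetric functions $Z_{k,1}+Z_{k,2}$ and $Z_{k,1}Z_{k,2}$ of the zeros (smooth in $x_k$ by Weierstrass preparation, with uniform bounds by Hypothesis~\ref{hyp:separate}), and so on, exactly in the spirit of Claim~\ref{claim-omega}. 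Splitting $\Psi_t(\x(t)+\bm w)-\Psi_t(\x(t))$ as the part in which only the explicit $x_k$-dependent coefficients change, plus the part in which only $\psi^*\omega$ changes, the first part lies in $\ell^{\infty,\sigma}$ and is smooth by Claim~\ref{claim-omega} and Corollary~\ref{corollary-smooth2} (applied with $K^+=\N$, $K^-=\Z\setminus\N$), while for the second part Proposition~\ref{prop:omega-decay} gives $(\psi^*\omega)[t,\x(t)+\bm w]-(\psi^*\omega)[t,\x(t)]\in C^{0,\sigma}(\wt{\Omega})$ with smooth dependence, and integrating an element of $C^{0,\sigma}(\wt{\Omega})$ against a uniformly bounded kernel lands in $\ell^{\infty,\sigma}$ --- exactly as in Lemma~\ref{decay-lemma1}(a)--(b). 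The bound $\sigma(k)/\sigma(k\pm1)\le\delta$, needed to absorb the nearest-neighbour coupling created by the opened nodes, is already built into Proposition~\ref{prop:omega-decay}.

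To conclude I would apply the Implicit Function Theorem to $\Theta$ in the Banach space $\ell^{\infty,\sigma}$. One has $\Theta(0,\cv{\x}'-\cv{\x})=\Psi_0(\cv{\x}')-\Psi_0(\cv{\x})=0$, and since $\Theta(0,\cdot)$ is precisely the map $\Delta\bm f$ of Proposition~\ref{prop:smooth2} with $f_k=\Psi_{0,k}$, its partial differential $D_{\bm w}\Theta(0,\cv{\x}'-\cv{\x})$ equals the block-diagonal operator $(d\Psi_{0,k}(\cv{x}'_k))_{k\in\Z}$, an automorphism of $\ell^{\infty,\sigma}$ by~(ii); hence it remains an automorphism for $t$ small. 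The Implicit Function Theorem then yields a unique small $\bm w(t)\in\ell^{\infty,\sigma}$ with $\Psi_t(\x(t)+\bm w(t))=0$. Being small in $\ell^{\infty,\sigma}$, hence in $\ellinf$, the point $\x(t)+\bm w(t)$ lies in the $\ellinf$-neighbourhood of $\cv{\x}'$ on which $\x'(t)$ is the unique zero of $\Psi_t$; therefore $\x(t)+\bm w(t)=\x'(t)$, so that $\x'(t)-\x(t)=\bm w(t)\in\ell^{\infty,\sigma}$, which is the assertion of the proposition.

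I expect the second paragraph to be the main obstacle: one must check, component by component, that every defining equation of the construction splits cleanly into a ``local in $k$ with uniformly bounded derivatives'' piece, handled by Corollary~\ref{corollary-smooth2}, and an ``$\omega$-linear'' piece, handled by Proposition~\ref{prop:omega-decay}, and that all the contours involved can be fixed inside $\wt{\Omega}_k$ uniformly in the parameters. The genuinely hard analytic input --- the $\delta^{-k}$ decay of the $\omega$-difference --- is already established in Proposition~\ref{prop:omega-decay}, so what remains is essentially the bookkeeping of Section~\ref{sec:proof}, repeated with the weight $\sigma$.
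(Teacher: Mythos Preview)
Your proposal is correct and follows essentially the same strategy as the paper: package the equations $\cal{E}_k=0$, $\cal{P}_{k,i}=\text{const}$, $\cal{G}_k=\text{const}$ into a single map (the paper calls it $\bm{\cal F}$, you call it $\Psi_t$), form the difference $\Delta\bm{\cal F}(t,\x,\Delta\x)=\bm{\cal F}(t,\x+\Delta\x)-\bm{\cal F}(t,\x)$ (your $\Theta$), show it lands in $\ell^{\infty,\sigma}$ by splitting each integral into a ``coefficients change'' piece handled by Corollary~\ref{corollary-smooth2} and an ``$\omega$ changes'' piece handled by Proposition~\ref{prop:omega-decay}, and then run the Implicit Function Theorem in $\ell^{\infty,\sigma}$ using that the differential at the central value is block-diagonal with uniformly invertible blocks. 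The paper isolates your second paragraph as a separate lemma (Lemma~\ref{decay-lemma2}) and makes the triangularity of the combined Jacobian explicit via the change of variable $b_k=-a_k\xi(v_k;\tau_k)+\wh{b}_k$, but otherwise the argument is the same.
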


\begin{proof}
	Recall the definition of $\cal{E}_k$ in Section \ref{ssec:zeros},
	$\cal{P}_{k,1}$ and $\cal{P}_{k,2}$ in Section \ref{ssec:period-problem} and
	$\cal{G}_k$ in Section \ref{ssec:balancing}.  We have solved equations by
	three consecutive applications of the Implicit Function Theorem.  But we
	could have solved all of them by one single application.  Indeed, consider
	the change of parameter
	\[
		b_k=-a_k\xi(v_k,\tau_k)+\wh{b}_k.
	\]
	By the computations in Sections \ref{ssec:zeros},
	\ref{ssec:period-problem} and \ref{ssec:balancing}, the jacobian of
	$(\cal{E}_k,\cal{P}_{k,1},\cal{P}_{k,2},\cal{G}_k)$ with respect to
	$(\wh{b}_k,a_k,\tau_k,v_k)$ has upper-triangular form with $\R$-linear
	automorphisms of $\C$ on the diagonal, whose inverses are uniformly bounded
	with respect to $k$.  Define
	\[
		\cal{F}_{k}(t,\x)=\left(\cal{E}_k(t,\x),\cal{P}_{k,1}(t,\x),\cal{P}_{k,2}(t,\x),\cal{G}_k(t,\x)+2\pi\ii G(q_0;T)\right)
		\quad\text{and}\quad
		\bm{\cal F}=(\cal{F}_k)_{k\in\Z}.
	\]
	Then $d_{\x}\bm{\cal F}(0,\cv{\x}')$ is an automorphism of $\ell^{\infty}$,
	and restricts to an automorphism of $\ell^{\infty,\sigma}$.  Define, for $\x$
	in an $\ell^{\infty}$-neighborhood of $\cv{\x}$ and $\Delta{\x}$ in an
	$\ell^{\infty,\sigma}$-neighborhood of $\Delta\cv{\x}=\cv{\x}'-\cv{\x}$
	\[
		\Delta\bm{\cal F}(t,\x,\Delta{\x})=\bm{\cal F}(t,\x+\Delta{\x})-\bm{\cal F}(t,\x).
	\]
	By Lemma \ref{decay-lemma2} below, $\Delta\bm{\cal F}(t,\x,\Delta{\x})\in\ell^{\infty,\sigma}$.
	We have
	\[
		\Delta\bm{\cal F}(0,\cv{\x},\Delta\cv{\x})=\bm{\cal F}(t,\cv{\x}')-\bm{\cal F}(t,\cv{\x})=0
		\quad\text{and}\quad
		d_{\Delta{\x}}(\Delta\bm{\cal F})(0,\cv{\x},\Delta\cv{\x})=d_{\x}\bm{\cal F}(0,\cv{\x}').
	\]
	By the Implicit Function Theorem, for $t$ small enough and $\x$ in a
	neighborhood of $\cv{\x}$, there exists
	$\Delta{\x}(t,\x)\in\ell^{\infty,\sigma}$ such that $\Delta\bm{\cal
	F}(t,\x,\Delta{\x}(t,\x))=0$.  We substitute $\x=\x(t)$ and obtain $\bm{\cal
	F}(t,\x(t)+\Delta{\x}(t,\x(t)))=\bm{\cal F}(t,\x'(t))=0$.  By uniqueness,
	$\x'(t)=\x(t)+\Delta{\x}(t,\x(t))$, which proves Proposition
	\ref{prop:parameter-decay}.
\end{proof}

\begin{lemma} \label{decay-lemma2}
	For $t$ in a neighborhood of $0$, $\x$ in an $\ell^{\infty}$-neighborhood of
	$\cv{\x}$ and $\Delta{\x}$ in an $\ell^{\infty,\sigma}$-neighborhood of
	$\Delta\cv{\x}$, $\Delta\bm{\cal F}(t,\x,\Delta{\x})\in\ell^{\infty,\sigma}$
	and depends smoothly on $t$, $\x$ and $\Delta{\x}$.
\end{lemma}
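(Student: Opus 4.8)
The plan is to reduce Lemma~\ref{decay-lemma2} to the weighted machinery already set up — Propositions~\ref{prop:smooth1},~\ref{prop:smooth2}, Corollary~\ref{corollary-smooth2}, and Proposition~\ref{prop:omega-decay} — in exactly the way Lemma~\ref{decay-lemma1} was established, the only new ingredient being that $\bm{\cal F}$ consists of $\omega$ multiplied by rational functions of the finite-dimensional parameter $x_k=(\wh b_k,a_k,\tau_k,v_k)$.

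First I would record that, after pulling back by $\psi_k$ to the fixed domains $\wt{\Omega}_k$, each scalar component of $\bm{\cal F}_k$ is a contour integral, over fixed parameter-independent curves contained in $\wt{\Omega}_k$, of a product $R_k(z;x_k)\,(\psi_k^*\omega)$ in which $R_k(\cdot;x_k)$ is, on the curve, a smooth function of $x_k$ with uniformly (in $k$) bounded derivatives. Indeed, by the computation in the proof of Proposition~\ref{prop:regularity1}, $\cal{E}_k=\frac{1}{2\pi\ii}\int_{\partial\Omega_k}\frac{(2z-s_k)\omega}{z^2-s_kz+p_k}$ with $s_k=Z_{k,1}+Z_{k,2}$, $p_k=Z_{k,1}Z_{k,2}$ smooth in $x_k$ by the Weierstrass Preparation Theorem (hence with uniformly bounded derivatives by Cauchy estimates), and $z^2-s_kz+p_k$ stays uniformly bounded away from zero on $\partial\Omega_k$ since the zeros $Z_{k,i}$ lie in a fixed compact part of $\Omega_k$; on $\partial D_k^-$ one has $g_k=1/z_k^-$, so $\cal{G}_k$ is, up to the conjugations, $\int_{\partial D_k^-}(z_k^-)^{-1}\omega$, and $z_k^-\circ\psi_k$ is smooth in $x_k$ and uniformly bounded above and below on the fixed circle $\wt{\gamma}_k^-$; finally $\cal{P}_{k,1}$, $\cal{P}_{k,2}$ are built from integrals over $\alpha_k$, $\beta_k$ of $g_k^{-1}\omega$ and of $t^2g_k\omega$ (which of the two depending on the parity of $k$), and here — as in the construction in Sections~\ref{ssec:zeros}, \ref{ssec:period-problem} and~\ref{ssec:balancing} — one fixes the representatives $\wt{\alpha}_k$, $\wt{\beta}_k$ so as to avoid, uniformly in $k$ and in the parameters near $\cv\x$, $\cv\x'$, the zeros of $g_k$; this is automatic for $k\ge 0$, where $\cv x_k=\cv x_k'$ takes only finitely many values in the eventually periodic situation of interest, and for $k<0$ it is already part of what makes $\bm{\cal F}$ a well-defined $\ell^\infty$-valued map near $\cv\x$ and $\cv\x'$. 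On such curves $g_k^{\pm 1}\circ\psi_k$ is smooth in $x_k$ with uniformly bounded derivatives.

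The main step is then the componentwise product-rule splitting
\[
	\Delta\bm{\cal F}_k
	=\int R_k(z;x_k')\bigl((\psi_k^*\omega)[t,\x']-(\psi_k^*\omega)[t,\x]\bigr)
	+\int\bigl(R_k(z;x_k')-R_k(z;x_k)\bigr)(\psi_k^*\omega)[t,\x],
\]
where for the $\cal{P}$-components one first writes $(g_k')^{\pm1}\omega'-g_k^{\pm1}\omega=\bigl((g_k')^{\pm1}-g_k^{\pm1}\bigr)\omega'+g_k^{\pm1}(\omega'-\omega)$ and handles the two summands the same way. In the first term, Proposition~\ref{prop:omega-decay} gives $(\psi^*\omega)[t,\x']-(\psi^*\omega)[t,\x]\in C^{0,\sigma}(\wt{\Omega})$ while $R_k(\cdot;x_k')$ is uniformly bounded in $C^0$ of the curve; since the integration curve carries the same index $k$ as the component $\cal{F}_k$, so that no weight shift occurs, the bilinear integration operator $(f,\eta)\mapsto\bigl(\int f_k\eta_k\bigr)_{k\in\Z}$ is bounded into $\ell^{\infty,\sigma}$, placing this term there. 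In the second term, $R_k(\cdot;x_k')-R_k(\cdot;x_k)$ lies in the weighted space $C^{0,\sigma}$ of the curve by Corollary~\ref{corollary-smooth2} (a difference of functions of $x_k$ alone, with $\cv x_k=\cv x_k'$ for $k\ge 0$ and $\sigma(k)=1$ for $k<0$), while $(\psi_k^*\omega)[t,\x]$ is uniformly bounded in $C^0$, so the same bounded bilinear operator again yields an element of $\ell^{\infty,\sigma}$; the factors $(g_k')^{\pm1}-g_k^{\pm1}$ and $g_k^{\pm1}$ are treated identically, using that $g_k$ depends only on $x_k$.

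Smoothness of $(t,\x,\Delta\x)\mapsto\Delta\bm{\cal F}$ then follows by composition together with boundedness (hence smoothness) of the multilinear maps above: the $\omega$-difference depends smoothly on $(t,\x,\Delta\x)$ by Proposition~\ref{prop:omega-decay}, the coefficient-difference $R_k(\cdot;x_k')-R_k(\cdot;x_k)$ depends smoothly on $(\x,\Delta\x)$ by Corollary~\ref{corollary-smooth2}, and the plain objects $R_k(\cdot;x_k)$, $g_k^{\pm1}\circ\psi_k$, $(\psi^*\omega)[t,\x]$ depend smoothly by Propositions~\ref{prop:smooth1} and~\ref{prop:omega}(c). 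I expect the only delicate point to be the verification of paragraph two — isolating, uniformly in $k$, fixed curves on which all the rational factors $g_k^{\pm1}\circ\psi_k$ and $(z^2-s_kz+p_k)^{-1}$ are bounded together with their $x_k$-derivatives; once that is secured, the argument is a mechanical repetition of the proof of Lemma~\ref{decay-lemma1}.
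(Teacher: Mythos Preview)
Your proposal is correct and follows essentially the same route as the paper: write each component of $\bm{\cal F}$ as a bilinear pairing of a coefficient depending only on $x_k$ with $\psi_k^*\omega$, apply the product-rule splitting $R'\omega'-R\omega=(R'-R)\omega+R'(\omega'-\omega)$, and handle the two pieces via Corollary~\ref{corollary-smooth2} and Proposition~\ref{prop:omega-decay} respectively, using the boundedness of the integration operator into $\ell^{\infty,\sigma}$. The paper carries this out explicitly only for $\cal{E}_k$ and then dismisses $\cal{P}_{k,1}$, $\cal{P}_{k,2}$, $\cal{G}_k$ with one sentence, so your more detailed treatment of those components---including the point about choosing representatives of $\alpha_k,\beta_k$ away from the zeros of $g_k$---is additional care rather than a different argument.
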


\begin{proof}
	Define for $z\in \T$:
	\[
		f_k[\x](z)=\frac{\psi_k(z)-(Z_{k,1}+Z_{k,2})}{\psi_k(z)^2-(Z_{k,1}+Z_{k,2})\psi_k(z)+Z_{k,1}Z_{k,2}}
		\quad\text{and}\quad \bm f[\x]=(f_k[\x])_{k\in\Z}.
	\]
	By Cauchy Theorem and a change of variable,
	\[
		\cal{E}_k(t,\x)=\frac{1}{2\pi\ii}\int_{\partial\wt{\Omega}_{k,2\varepsilon'}}f_k\psi_k^*\omega[t,\x].
	\]
	Hence we can write
	\[
		\bm{\cal E}(t,\x)=\bm B(\bm f[\x],(\psi^*\omega)[t,\x])\quad\text{where}\quad
		\bm B(\bm f,\eta)=\left(\frac{1}{2\pi\ii}\int_{\partial\wt{\Omega}_{k,2\varepsilon'}}f_k \eta_k\right)_{k\in\Z}.
	\]
	Using Weierstrass Preparation Theorem, the symmetric functions of $Z_{k,1}$
	and $Z_{k,2}$ are holomorphic functions of $x_k$. Hence $f_k$ is a smooth
	function of $x_k$ with value in
	$C^0(\partial\wt{\Omega}_{k,2\varepsilon'})$.  Using
	Corollary~\ref{corollary-smooth2} and that the bilinear operator
	\[
		\bm B:\big(\bigoplus_{k\in\Z}C^0(\partial\wt{\Omega}_{k,2\varepsilon'})\big)_{\infty,\sigma}\times C^0(\wt{\Omega})\to\ell^{\infty,\sigma}
	\]
	is bounded, we conclude that
	\begin{equation}
		\label{eq-decay1}
		\bm B(\bm f[\x+\Delta{\x}]-\bm f[\x],(\psi^*\omega)[t,\x])\in\ell^{\infty,\sigma}
	\end{equation}
	and depends smoothly on $t$, $\x$ and $\Delta{\x}$.
	Using Proposition \ref{prop:omega-decay} and that the bilinear operator
	\[
		\bm B:\big(\bigoplus_{k\in\Z}C^0(\partial\wt{\Omega}_{k,2\varepsilon'})\big)_{\infty}\times C^{0,\sigma}(\wt{\Omega})\to\ell^{\infty,\sigma}
	\]
	is bounded, we obtain
	\begin{equation} \label{eq-decay2}
		\bm B(\bm f[\x+\Delta{\x}],(\psi^*\omega)[t,\x+\Delta{\x}]-(\psi^*\omega)[t,\x])\in\ell^{\infty,\sigma}
	\end{equation}
	and depends smoothly on $t$, $\x$ and $\Delta{\x}$.
	Adding \eqref{eq-decay1} and \eqref{eq-decay2}, we conclude that
	\[
		\bm{\cal E}(t,\x+\Delta{\x})-\bm{\cal E}(t,\x)\in\ell^{\infty,\sigma}
	\]
	and depends smoothly on $t$, $\x$ and $\Delta{\x}$.

	Finally, $\cal{P}_{k,1}$, $\cal{P}_{k,2}$ and $\cal{G}_k$ are defined as
	integrals of $\omega$ times powers of $g_k$ on certain curves in $T_k$, so we
	can deal with them in the same way as $\cal{E}_k$.
\end{proof}

\bibliography{References}
\bibliographystyle{alpha}

\end{document}